\documentclass{amsart}
\usepackage[utf8]{inputenc}

\usepackage{hyperref}

\usepackage{amsmath}
\usepackage{amssymb}
\usepackage{tikz-cd}
\usepackage{enumitem}
\usepackage{bbm}
\usepackage{todonotes}
\usepackage{ifthen}
\usepackage{mathtools}
\usepackage{zref-clever}

\DeclareMathOperator{\dom}{dom}

\DeclareMathOperator{\cf}{cf}

\DeclareMathOperator{\ran}{ran}

\newcommand{\axiom}[1]{\mathsf{#1}}
\newcommand{\CH}{\axiom{CH}}
\newcommand{\PFA}{\axiom{PFA}}
\newcommand{\BPFA}{\axiom{BPFA}}
\newcommand{\MA}{\axiom{MA}}
\newcommand{\BA}{\axiom{BA}}
\newcommand{\ZFC}{\axiom{ZFC}}

\theoremstyle{plain}

\zcRefTypeSetup{thm}{Name-sg=Theorem, name-sg=theorem, Name-pl=Theorems, name-pl=theorems}

\newtheorem{thm}{Theorem}[section]
\newtheorem{lemma}[thm]{Lemma}
\newtheorem{prop}[thm]{Proposition}
\newtheorem{cor}[thm]{Corollary}
\newtheorem{claim}[thm]{Claim}
\newtheorem{fact}[thm]{Fact}

\theoremstyle{definition}
\newtheorem{definition}[thm]{Definition}

\newtheorem{quest}[thm]{Question}

\title{Hagendorf orders}
\author{Jonathan Schilhan and Thilo Weinert}
\address{University of Vienna\\
Institute of Mathematics\\
Kurt Gödel Research Center\\
Kolingasse 14-16\\
1090 Vienna\\
Austria}
\email{jonathan.schilhan@univie.ac.at}
\email{thilo.weinert@univie.ac.at}

\subjclass[2020]{Primary 06A05; Secondary 03E04, 03E57, 03E65.}
\keywords{linear orders, well-orders, forcing axioms, PFA, Baumgartner's axiom, diamond principle}

\begin{document}

\begin{abstract}
Call a linear order type $\varphi$ \emph{Hagendorf} if it shares two properties with additively indecomposable ordinal numbers without being one itself: $\varphi$ is strictly indecomposable to the right and whenever $\psi < \varphi$, then $\psi$ can be embedded into a proper initial segment of $\varphi$. 

In the 1970s, J. Hagendorf asked whether such types exist. F. Galvin observed that they must be uncountable, and soon thereafter, J. Larson proved that they cannot be scattered. We provide a simplification of her argument that might give additional insight into the $\sigma$-scattered case. We then show that consistently Hagendorf types exist and that this holds in many models of set theory, for instance under $\BA$ or $\MA_{\aleph_1}$. In fact, their non-existence has large cardinal strength. Furthermore, we construct real Hagendorf types from $\diamondsuit$ and from $\PFA$. This marks the first progress on this interesting problem since its description by Larson four dozen years ago.
\end{abstract}

\maketitle

\section{Introduction}

\subsection{Foreword}


The theory of linear orders, as a whole, can be divided into two parts, concerning infinite linear orders and finite linear orders, respectively. However, the latter is more commonly known as ``the theory of numbers", cf. \cite{HardyWright}. It seems natural, then, to argue that linear order types provide a general notion of number: their addition, multiplication, and embeddability relation extend the usual arithmetic operations on the natural numbers and their order.

This paper is concerned with the former part. Its study started more than a century ago with the seminal work of Hausdorff, cf. \cite{Hausdorff}. Over the decades, 
different aspects of linear orders have been studied and some basic problems only have been resolved after being open for decades. Two examples may illustrate this point -- one is Shelah's conjecture from the seventies that the uncountable linear orders consistently admit a basis of cardinality $5$, another is Sierpiński's question from the fifties whether every linear order isomorphic to its cube is isomorphic to its square. The former was proved by Moore in 2006, cf. \cite{Moore2006} and the latter was answered affirmatively by Ervin in 2017, cf. \cite{Ervin}.

In this paper, we are studying a problem first raised by Hagendorf in the seventies of the last century. Roughly, it deals with the question of how closely a linear order type can resemble an ordinal number without actually being well-ordered. Hagendorf's problem is a basic question about linear orders in the best sense: it is easy to state, requires only the most elementary notions of the subject, and yet appears to touch on some of its central difficulties. The problem is somewhat reminiscent of Suslin's hypothesis. A Suslin line, if it exists, is a counterexample to an elementary and compelling characterization of the real line. Similarly, a Hagendorf order is a counterexample to a compelling characterization of well-orders. Despite this simplicity, the problem seems to have received comparatively little attention in the literature. The main question -- whether their existence can be proved from $\ZFC$ alone -- remains open.


\subsection{Basic notions}

Recall that for order types $\varphi$ and $\psi$, the statement $\varphi \leq \psi$ means that for any orders $A$ and $B$ of types $\varphi$ and $\psi$, respectively, $A$ can be embedded into $B$ in an order-preserving way.\footnote{Throughout the paper, we use the term \emph{order-preserving} in the strict sense. That is, $f$ is order-preserving if and only if $x < y$ implies $f(x) < f(y)$.} Types $\varphi$ and $\psi$ are said to be \emph{equimorphic}, often written as $\varphi \equiv \psi$, if $\psi \leq \varphi \leq \psi$.  We write $\varphi \not\leq \psi$ if $\varphi \leq \psi$ is false and $\varphi < \psi$ if $\varphi \leq \psi$ but $\psi \not\leq \varphi$. Furthermore, $\varphi^*$ is the reverse type of $\varphi$, i.e. if $(A, <)$ has order type $\varphi$, then $(A, >)$ has order type $\varphi^*$. The sum $\varphi + \psi$ is defined as the order type of $A \cup B$, where $A$ and $B$ are disjoint, $A$ has type $\varphi$, $B$ has type $\psi$, and every element of $A$ is below every element of $B$. Similarly, if $\alpha$ is an ordinal and $\varphi_\beta$ an order type, for each $\beta < \alpha$, we define the well-ordered sum $\sum_{\beta < \alpha} \varphi_\beta$ and the reverse well-ordered sum $\sum_{\beta < \alpha^*} \varphi_\beta$ in the obvious way. The product $\varphi \psi$ is the type of $B \times A$ under the lexicographic ordering, i.e. the ordering where $(b_0,a_0) \leq (b_1,a_1)$ if and only if $b_0 < b_1$, or $b_0 = b_1$ and $a_0 \leq a_1$.\footnote{Note that the order of operations is reversed for the product. As far as we know this is a historical relic.} This notation is extended to linear orders themselves, so we write $A \leq B$, $A^*$, $A + B$, etc. Ordinal numbers are in general identified with their type under $\in$.


An order type $\varphi$ is \emph{scattered}, if $\eta \not\leq \varphi$, where $\eta$ is the order-type of the rationals, i.e. the unique countable dense linear order without endpoints up to isomorphism. Hausdorff showed that the class of scattered orders is the smallest class of order types containing both $0$ and $1$ which is closed under well-ordered and reverse well-ordered sums, cf. \cite{Hausdorff}. The stage at which an order appears in the recursive procedure of applying these sums is often called its \emph{Hausdorff rank}. The class of scattered order types inherits many properties from that of ordinal numbers and much the same can be said for the even wider class of $\sigma$-scattered order types. These are types of orders $A$ which can be presented as a countable union of subsets $A_n$, $n < \omega$, each of which is scattered in their induced suborder. Famously, Laver proved that the class of $\sigma$-scattered orders is \emph{well-quasi-ordered} under embeddability, settling Fra\"issé's conjecture, cf. \cite{Laver1971}. A partial order $(Q,\leq)$ is well-quasi-ordered if for any sequence $\langle q_n : n < \omega \rangle$ in $Q$, we have that $q_i \leq q_j$, for some $i < j$.

An order type $\tau$ is \emph{indecomposable} if for all $\varphi$ and $\psi$ such that $\tau = \varphi + \psi$ we have $\tau \leq \varphi$ or $\tau \leq \psi$. If we always have that $\varphi < \tau$ and $\tau \leq \psi$, whenever $\psi > 0$, then we call $\tau$ \emph{strictly indecomposable to the right}. If $\varphi^*$ is strictly indecomposable to the right we call $\varphi$ \emph{strictly indecomposable to the left}. If $\tau = \varphi + \psi$ we call $\varphi$ an \emph{initial segment} of $\tau$ and $\psi$ a \emph{final segment} of $\tau$. In this situation $\varphi$ is a \emph{proper initial segment} if $\psi > 0$ and $\psi$ is a \emph{proper final segment} if $\varphi > 0$.

Of course, if $\alpha$ is an ordinal, then every $\psi < \alpha$ embeds in a proper initial segment of $\alpha$, since it is a strictly smaller ordinal. If, moreover, $\alpha$ is indecomposable, then $\alpha$ is strictly indecomposable to the right. Jean Hagendorf asked whether all linear order types sharing these two properties are types of ordinal numbers.


\subsection{Hagendorf's Problem}

\begin{definition}
    We say that an order type $\varphi$ is \emph{Hagendorf} if \begin{enumerate}
        \item $\varphi$ is strictly indecomposable to the right, 
        \item $\psi < \varphi$ implies $\psi + 1 \leq \varphi$,
        \item $\varphi$ is not the type of an ordinal.
    \end{enumerate}

    We say that $(L,<)$ is a \emph{Hagendorf order} if its type is Hagendorf.
\end{definition}

Let us note that the property of being Hagendorf is preserved under equimorphy. Clearly, (2) and (3) are. For (1), this is a standard fact: If $\varphi \leq \tau = \psi + \chi \leq \varphi$, $\chi \neq 0$, then $\tau \not\leq \psi$, as neither $\varphi \leq \psi$, by the strict indecomposability of $\varphi$. The embedding of $\varphi$ into $\psi + \chi$ must meet $\chi$ then, meaning that $\varphi = \psi' + \chi'$, where $\psi' \leq \psi$ and $0 < \chi' \leq \chi$. Thus, $\tau \leq \varphi \leq \chi' \leq \chi$. This will be used implicitly a few times in the paper.

\begin{quest}[{\cite[Problème, p.27]{Hagendorf1982}}]
Do Hagendorf orders exist? Or in its original negated form: 

\begin{quote}
Une chaine strictement insécable à droite dont toute restriction stricte
se plonge dans un intervalle initial propre est-elle un bon-ordre, c’est-à-dire
$X < A \rightarrow X + 1 < A$ entra\^{i}ne-t-il que $\omega^* \not\leq A$?
\end{quote}

\end{quest}

Galvin told Jean Larson about this question and sketched a proof of an affirmative answer within the realm of countable order types (see Theorem~\ref{thm:galvin}). She went on to show that it also holds true within that of scattered order types, cf. \cite{Larson}. In our terminology, Larson showed that no scattered order type is Hagendorf.\footnote{Larson also uses the term ``Hagendorf order'' but without excluding ordinals. We chose to deviate here because the non-ordinal case is of course the one of interest and our results seem more natural to state in this way. A Hagendorf order is a counterexample to Hagendorf's question, just like a Suslin line is a counterexample to Suslin's Hypothesis.} In the abstract of her paper she mentions that this question is particularly interesting within the realms of $\sigma$-scattered and real order types.\footnote{A \emph{real order type} is the type of a suborder of $(\mathbb{R},<)$. }

\subsection{Further background}
Recall that an \emph{Aronszajn line} is an uncountable linear order all whose real subtypes are countable and which does not contain a copy of either $\omega_1$ or $\omega_1^*$. Aronszajn proved that they exist, cf. \cite{Kurepa}. A prominent subclass of Aronszajn lines is that of \emph{Suslin lines} and Suslin's hypothesis states that it is empty. Another example of an Aronszajn line is a \emph{Countryman line} -- a linear order whose square, partially ordered componentwise, is the union of countably many chains. Countryman noted that such orders are Aronszajn lines, and in \cite{Shelah}, Shelah showed that they exist. Galvin noticed that a Countryman line and its reverse only have countable subtypes in common, so a basis for the uncountable linear orders has to contain at least five different uncountable types: $\omega_1$, $\omega_1^*$, a Countryman line and its reverse, and an uncountable real type. That a single order could form a basis for uncountable real types was known quite early on -- Baumgartner's Axiom $(\BA)$ which states that any two $\aleph_1$-dense sets of reals are order-isomorphic was shown consistent by Baumgartner in \cite{Baumgartner}. 

$\BA$ is a consequence of the Bounded Proper Forcing Axiom $\BPFA$, a weakening of the Proper Forcing Axiom $\PFA$, cf. \cite{KruegerMoore}. Thus it holds in a quite canonical class of models in which the continuum hypothesis fails.

\subsection{This Paper}
We start by providing a simpler and shorter proof that scattered orders are not Hagendorf. In contrast to Larson's original argument, almost everything is argued elementarily from the definition of scatteredness as not embedding the rationals. The only notable exception is Lemma~\ref{lem:nogreatest}, which is used in the argument for finite Hausdorff rank (Proposition~\ref{prop:simpdecomp}). Here we apply Laver's profoundly deep well-quasi-orderedness result, but also only to orders of finite Hausdorff rank.

The main motivation for providing such a simplification is the potential of generalizing it to $\sigma$-scattered orders, which is one of Larson's open problems. Form our proof, the main obstacle seems to be to find an appropriate version of Theorem~\ref{thm:sumorreverse} in the $\sigma$-scattered context.

Subsequently we show that Hagendorf types exist in different widely studied models of set theory. We prove in \zcref[S]{thm:BA=>Hagendorf} that their existence follows from Baumgartner's Axiom.\footnote{Let us remark that Baumgartner's result predates Larson's \cite{Larson} by a few years, which seems to be the first mention of Hagendorf's problem in the literature (the former has been submitted in December 1971, the latter in July 1977).} Ervin observed that a similar construction works if one assumes that there is a Countryman line which is minimal for uncountable linear orders, see \zcref[S]{thm:Countryman}. A recent result due to Cummings, Eisworth and Moore lets us conclude that the non-existence of Hagendorf types, in fact, has large cardinal strength, see Corollary~\ref{cor:largecard}. These examples are neither $\sigma$-scattered nor real types, so they do not fall into the case J. Larson deemed particularly interesting. However, relying on a result of Abraham, Rubin and Shelah, essentially the same construction yields a real Hagendorf type, albeit in a not quite so canonical model -- this is \zcref[S]{ARS=>Hagendorf}.

In Section~\ref{sec:diamond}, we show how to construct a real Hagendorf order from the \emph{diamond principle} $\diamondsuit_{\omega_1}$ -- this is \zcref[S]{thm:diamond=>Hagendorf_real}. In particular, they exist in G\"odel's constructible universe and they are consistent with $\CH$. The construction is much more involved than the previous examples and proceeds by a fairly complex transfinite recursion. There are four notable ingredients: the fact that continuum many order-embeddings suffice to cover all order-embeddings on the reals (Fact~\ref{fact:borel}), the Baire Category Theorem, Cantor's back-and-forth construction, and the $\diamondsuit$-sequence. 

Each of these ingredients has an appropriate analogue under $\PFA$, and, in Section~\ref{sec:pfa}, we succeed to modify the construction to suit this context and produce a real Hagendorf order of size $\aleph_2$ -- this is \zcref[S]{thm:PFA=>Hagendorf_real}.

We conclude the paper with the remaining open questions and further comments.

\section{Scattered orders}

Let us begin by reviewing Galvin's short proof that there are no countable Hagendorf orders. The more general argument for scattered orders is somewhat reminiscent of this. 

\begin{thm}[Galvin]\label{thm:galvin}
    No countable linear order is Hagendorf. 
\end{thm}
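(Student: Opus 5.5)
The plan is to reduce to the scattered case and then get a contradiction from the absorption property that (1) and (2) give together. Suppose $L$ is a countable Hagendorf order. First I will rule out $\eta \le L$. Every countable linear order embeds into $\eta$, so $\eta \le L$ would give $L \equiv \eta$, and being Hagendorf is preserved under equimorphy. But $\eta = \eta + \eta$ with $\eta \le \eta$, so $\eta$ is not strictly indecomposable to the right. Hence $L$ is scattered. Since $L$ is not an ordinal, $\omega^* \le L$.

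Next I derive absorption. By (1), $L$ has no greatest element, since $L = L' + 1$ would force $L \le 1$. So every point $p$ has a nonempty final segment above it, and by (1) that final segment contains a copy of $L$. Now let $\psi < L$. By (2), $\psi + 1 \le L$; let $p$ be the image of the last point. Then $\psi$ embeds below $p$ and $L$ embeds above $p$, so $\psi + L \le L$, and hence $\psi + L \equiv L$. Also $\omega^* < L$: if $L \le \omega^*$, then $L$ is a reverse ordinal, and strict right indecomposability forces $L \le 1$. Iterating absorption gives $(\omega^*)\cdot n + L \le L$ for every $n$. More generally, every type of the form $\psi_1 + \dots + \psi_k$ with each $\psi_i < L$ is absorbed on the left.

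To finish I will use a descending chain, which is the step I expect to be the main obstacle. Fix $x_0 > x_1 > \cdots$ in $L$, and let $I_n = \{y : y < x_n\}$ and $F_n = L \setminus I_n$. Then $F_n \equiv L$ by (1), and $I_n < L$ because $I_n$ is a proper initial segment. I want to show that the types $I_n$ are eventually strictly decreasing, or else that $L$ is equimorphic to a proper initial segment of itself. Either outcome contradicts something: a strictly decreasing infinite sequence contradicts Laver's theorem that countable scattered types are well-quasi-ordered, which is cited in the introduction, and the second outcome contradicts $I_n < L$.

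The natural attempt goes as follows. Set $I = \bigcap_n I_n$. Then $I < L$, so $I + L \le L$. The part $\bigcup_n F_n$ has cofinal type $\omega^*$ from the left, and each block $[x_{n+1}, x_n)$ is absorbed. Together these should let me embed $L$ into $I_n$, using $L \equiv I + (\text{blocks}) + L$, or else show $I_{n+1} < I_n$ for infinitely many $n$. The wqo is used only to exclude the latter.

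If this bookkeeping does not close, my fallback is to argue by Hausdorff rank instead. Absorption $\omega^* + L \equiv L$ inside every final segment should force the rank of $L$ to exceed the rank of each of its proper initial segments by a uniform amount. Combined with the fact that $L$ is a countable scattered order of countable rank, this should give a contradiction.
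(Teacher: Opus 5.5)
Your reduction to the scattered case is correct, and so is your absorption property ($\psi < L$ implies $\psi + L \le L$). But the proof stops exactly where the contradiction has to come from.

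The descending-chain dichotomy is asserted, not proved. You claim that either the $I_n$ are eventually strictly decreasing, or $L$ embeds into some $I_n$. Granting Laver's theorem, the first branch is simply impossible, because the $I_n$ are then eventually equimorphic. So everything rests on showing $L \le I_n$, and your sketch gives no way to do this. Absorption only ever places pieces \emph{to the left} of a copy of $L$, and that copy still sits in the final segment $F_0$. So rewriting $L \equiv I + (\text{blocks}) + L$ never moves a copy of $L$ below $x_n$. The Hausdorff-rank fallback is also only a heuristic. Nothing you have established forces the rank of $L$ to exceed the ranks of its proper initial segments by an amount that contradicts countability.

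The missing idea is to use absorption in the other direction, to build long well-ordered sums inside $L$.
\begin{enumerate}
\item[(a)] Suppose $\psi_n < L$ for all $n$, with absorbing embeddings $e_n \colon \psi_n + L \to L$. Set $E_0 = e_0$ and $E_n = E_{n-1} \circ (\mathrm{id}_{\psi_0+\dots+\psi_{n-1}} + e_n)$. These maps are coherent, so their union embeds $\sum_{n<\omega}\psi_n$ into $L$.
\item[(b)] For every ordinal $\alpha$, $\alpha \le L$ implies $\alpha < L$. Otherwise $L$ embeds into $\alpha$, so $L$ is well-ordered and hence an ordinal.
\item[(c)] By induction, every countable ordinal embeds into $L$. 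The successor step uses (2) together with (b). The limit step writes $\alpha$ as $\sum_n \alpha_n$ with $\alpha_n < \alpha$ and applies (a).
\item[(d)] This contradicts countability plus scatteredness. A countable scattered order has countable Hausdorff rank, so it cannot embed $\omega^\beta$ for all countable $\beta$. Equivalently, a scattered order embedding every countable ordinal embeds $\omega_1$ or $\omega_1^*$.
\end{enumerate}
This is exactly the paper's argument. It needs nothing beyond what you had already proved, and it does not use Laver's theorem at all.
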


\begin{proof}
    Let $\varphi$ be the type of a countable Hagendorf order. Then $\varphi \leq \eta$. So if $\eta = \eta + \eta \leq \varphi$, we have that $\varphi$ embeds in a proper initial segment of itself. Thus $\varphi$ must be scattered. By induction, show that every countable ordinal embeds in $\varphi$. If $\alpha \leq \varphi$, then $\alpha < \varphi$, otherwise $\varphi$ is an ordinal. So $\alpha$ embeds in a proper inital segment of $\varphi$, i.e. $\alpha +1 \leq \varphi$. If $\alpha_n \leq \varphi$ for each $n \in \omega$, use the same argument, together with the strict indecomposability of $\varphi$, to see that $\sum_{n< \omega} \alpha_n \leq \varphi$. Any scattered order that embeds every countable ordinal embeds $\omega_1$ or $\omega_1^*$ (see \cite[Theorem 5.28]{Rosenstein1982}, or the more general Theorem~\ref{thm:sumorreverse} below) -- but $\varphi$ is countable.
\end{proof}

This also appears as Exercise 10.4 (14) in Rosenstein's book \cite{Rosenstein1982}.

\subsection{General considerations}
\begin{definition}
    Let $\mathcal{A}$ be a collection of order types and $\alpha$ an ordinal. Then we say that $\tau$ is an \emph{$\alpha$-sum of $\mathcal{A}$} if $\tau = \sum_{i< \alpha} \varphi_i$, where $\varphi_i \in \mathcal{A}$ for each $i < \alpha$. Similarly, we say that $\sum_{i< \alpha^*} \varphi_i$ is an \emph{$\alpha^*$-sum of $\mathcal{A}$}.

    We say that $\tau$ is a \emph{$<\alpha$-sum of $\mathcal{A}$} if it is a $\beta$-sum of $\mathcal{A}$, for some $\beta<\alpha$.
\end{definition}

\begin{lemma}\label{lem:simpdecomp}
    Let $\varphi = \psi_0 + \psi_1$ be an order type, $\alpha$ an indecomposable ordinal and $\mathcal{A}$ a set of order types so that every $<\alpha$-sum of $\mathcal{A}$ embeds into $\varphi$. Then for at least one $i < 2$, every $<\alpha$-sum of $\mathcal{A}$ embeds into $\psi_i$.
\end{lemma}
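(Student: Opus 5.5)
Argue by contradiction: suppose that for each $i<2$ there is a $<\alpha$-sum $\tau_i$ of $\mathcal{A}$ with $\tau_i \not\leq \psi_i$. Say $\tau_i = \sum_{j<\beta_i}\varphi^i_j$ with $\beta_i<\alpha$. Form the concatenation $\tau = \tau_1 + \tau_0$, or more safely $\tau = \tau_0 + \tau_1$ suitably repeated. This is a $(\beta_0+\beta_1)$-sum of $\mathcal{A}$, and $\beta_0+\beta_1<\alpha$ because $\alpha$ is additively indecomposable (a limit of the form $\omega^\gamma$, or $\alpha=1$, where the claim is trivial since the only $<1$-sum is $0$). By hypothesis $\tau$ embeds into $\varphi=\psi_0+\psi_1$ via some $f$.

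The key step is to split the embedding. The preimage $f^{-1}[\psi_0]$ is an initial segment of $\tau$, so it determines a cut in the index ordinal. Take $\tau=\tau_0+\tau_1$ and consider where the cut lies.

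\emph{Case 1:} the cut lies at or beyond the end of the $\tau_0$-block. Then all of $\tau_0$ maps into $\psi_0$, a contradiction.

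\emph{Case 2:} the cut lies strictly inside $\tau_0$. Then all of $\tau_1$ lies after the cut and maps into $\psi_1$, again a contradiction.

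So the single sum $\tau_0+\tau_1$ already works, and no repetition is needed.

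The only points needing care are the following. First, the sum of two $<\alpha$-sums is a $<\alpha$-sum, which uses indecomposability in the form $\beta_0,\beta_1<\alpha \Rightarrow \beta_0+\beta_1<\alpha$; this holds for $\alpha=\omega^\gamma$, and $\alpha=1$ is handled by the triviality noted above. Second, the cut in the sum may fall inside a summand $\varphi^i_j$, but this does not matter: the argument only uses that one of the two entire blocks $\tau_0$, $\tau_1$ lands on one side of the cut. I expect no real obstacle here; the main thing to verify is this ordinal arithmetic.
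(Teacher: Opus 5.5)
Your proof is correct and is essentially the paper's argument: you form $\tau_0+\tau_1$, which is a $<\alpha$-sum by indecomposability of $\alpha$, embed it into $\psi_0+\psi_1$, and note that either all of $\tau_0$ lands in $\psi_0$ or all of $\tau_1$ lands in $\psi_1$. You have simply written out the ``easy contradiction'' that the paper leaves to the reader. The hedge about repeating the sum and the separate treatment of $\alpha=1$ are unnecessary, since $\beta_0+\beta_1<\alpha$ already holds for $\alpha=1=\omega^0$.
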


\begin{proof}
    Let $\tau_0, \tau_1$ be $<\alpha$-sums such that $\tau_0 \not\leq \psi_0$ and $\tau_1 \not\leq \psi_1$. Since $\alpha$ is indecomposable, $\tau = \tau_0 + \tau_1$ is also a $<\alpha$-sum and $\tau \leq \varphi$. An easy contradiction is obtained.
\end{proof}

\begin{prop}\label{prop:simpdecomp}
    Let $\varphi$ be an order type, $\mathcal{A}$ a set of subtypes of $\varphi$ and $\alpha > 1$ an indecomposable ordinal such that every $<\alpha$-sum of $\mathcal{A}$ embeds in $\varphi$. Then one of the following holds. 
 \begin{enumerate}
        \item Every $\alpha$-sum of $\mathcal{A}$ embeds into $\varphi$.
        \item Every $\cf(\alpha)^*$-sum of $\mathcal{A}$ embeds into $\varphi$.
        \item $\varphi = \psi_0 + 1 + \psi_1$, where every $<\alpha$-sum of $\mathcal{A}$ embeds in $\psi_0$ and in $\psi_1$.
    \end{enumerate}
\end{prop}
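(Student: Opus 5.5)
Throughout I fix a linear order $L$ of type $\varphi$ and work with points of $L$. First I dispose of the degenerate case: if every member of $\mathcal{A}$ is $0$, then every sum is $0$ and (1) holds trivially. So I assume some $\sigma\in\mathcal{A}$ is nonzero. Since $\alpha>1$ is indecomposable, $\alpha\ge\omega$, and hence $\tau+\sigma$ is a $<\alpha$-sum whenever $\tau$ is.

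For $x\in L$ I define two sets:
\[ x\in F \iff \text{every } {<}\alpha\text{-sum of } \mathcal{A} \text{ embeds into } L_{>x}, \qquad x\in C \iff \text{every } {<}\alpha\text{-sum embeds into } L_{<x}. \]
If $F\cap C\neq\emptyset$, then (3) holds with $\psi_0=L_{<x}$ and $\psi_1=L_{>x}$, so I assume $F\cap C=\emptyset$.

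\textbf{Step 1: $F\cup C=L$.} Apply Lemma~\ref{lem:simpdecomp} to the decomposition $L_{<x}+(1+L_{>x})$. If every $<\alpha$-sum embeds into $1+L_{>x}$, then for each $\tau$ the sum $\tau+\sigma$ embeds there. At most one point of $\tau+\sigma$ can land on $x$, and since $\sigma$ is nonzero that point is not in $\tau$, so $\tau$ embeds into $L_{>x}$. Hence every $x$ lies in $C$ or in $F$.

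Clearly $C$ is upward closed and $F$ is downward closed, so $L=F+C$. By Lemma~\ref{lem:simpdecomp}, all $<\alpha$-sums embed into $F$ or all embed into $C$.

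\textbf{Step 2: the case where everything embeds into $C$; aim for (2).} If $C$ has a least element $c$, then $L_{<c}\subseteq F$ and $c\notin F$. I want to derive a contradiction from this, or else reach (3) via the lemma applied to $L_{<c}+\{c\}+L_{>c}$; this needs checking.

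Otherwise $C$ has no least element. Pick a coinitial strictly decreasing sequence $\langle c_\xi:\xi<\lambda\rangle$ in $C$. For $c\in C$, split $L_{<c}=F+C_{<c}$ and apply the lemma. If for cofinally many $c$ everything embeds into $F$, we are in the other case. Otherwise everything embeds into each $C_{<c}$, and by a further application of the lemma also into each interval $(c_{\xi+1},c_\xi)$ after thinning.

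Now take a $\cf(\alpha)^*$-sum $\sum_{i<\cf(\alpha)^*}\varphi_i$. It has the form $\sum_{i<\cf(\alpha)^*}\varphi_i$ where the terms are single members of $\mathcal{A}$, and each single term is a $<\alpha$-sum. I then embed the $i$-th term into the $i$-th interval going downward. This requires enough intervals, i.e.\ $\lambda\ge\cf(\alpha)$, or a regrouping of the terms in the case $\lambda<\cf(\alpha)$.

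\textbf{Step 3: the case where everything embeds into $F$; aim for (1).} This is the mirror image of Step 2. Write $\alpha=\sum_{\xi<\cf(\alpha)}\beta_\xi$ with each $\beta_\xi<\alpha$, so that an $\alpha$-sum splits into $\cf(\alpha)$ consecutive blocks, each a $<\alpha$-sum. The lemma applied to $F_{\le f}+F_{>f}$ shows that all $<\alpha$-sums embed into $F_{>f}$. The reason is that $F_{\le f}\subseteq L_{<f'}$ for some $f'\in F\setminus C$, when $F$ has no maximum; the maximum case again needs separate treatment. I then embed the blocks recursively, each one above the previous images.

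\textbf{Main obstacle.} The hard step is the limit stages of these recursions. The images of the earlier blocks may be cofinal in $F$ (resp.\ coinitial in $C$), leaving no room for the next block. I would first try to avoid this by fixing in advance a cofinal sequence of points $f_\xi$ and embedding block $\xi$ into the interval $(f_\xi,f_{\xi+1})$. To get a $<\alpha$-sum to fit into a single interval, I would iterate the lemma on $F_{>f_\xi}=(f_\xi,f_{\xi+1}]+F_{>f_{\xi+1}}$, passing to a subsequence of the $f_\xi$ if needed. If some interval between consecutive chosen points never absorbs all $<\alpha$-sums, I expect that to force a point in $F\cap C$, which is the alternative (3).
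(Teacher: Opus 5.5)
Your set-up is the paper's: once $F\cap C=\emptyset$, your $F$ and $C$ are exactly its $I_0$ and $I_1$, and Step~1 is correct. However, there is a genuine gap. The proposal stops where the actual argument begins, and the missing idea is to compare $\cf(\alpha)$ with the cofinality $\theta$ of $F$ (and dually with the coinitiality $\lambda$ of $C$).

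Fix an increasing cofinal sequence $\langle f_\xi:\xi<\theta\rangle$ in $F$ with $\theta$ regular.
\begin{itemize}
\item \textbf{Case $\theta<\cf(\alpha)$.} Choose ${<}\alpha$-sums $\tau_\xi$ with $\tau_\xi\not\le L_{<f_\xi}$; this is possible since $f_\xi\notin C$. Because $\theta<\cf(\alpha)$ and $\alpha$ is indecomposable, $\sum_{\xi<\theta}\tau_\xi+\sigma$ is again a ${<}\alpha$-sum, so it embeds into $F$. Some point of $\sigma$ is then sent below some $f_\eta$, hence so is all of $\sum_\xi\tau_\xi$, and $\tau_\eta$ embeds into $L_{<f_\eta}$. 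This is a contradiction, so this case cannot occur.
\item \textbf{Case $\lambda<\cf(\alpha)$ for $C$.} Dually, this is refuted using $\sigma+\sum_{\xi<\lambda}\tau_\xi$ with $\tau_\xi\not\le L_{>c_\xi}$. So this case has to be ruled out, not handled by a ``regrouping of the terms''.
\item \textbf{Case $\theta\ge\cf(\alpha)$.} The lemma applied to $F=L_{<f_\xi}+F_{\ge f_\xi}$ shows that each $F_{\ge f_\xi}$ embeds every ${<}\alpha$-sum. Hence, via $\tau+\sigma$, every ${<}\alpha$-sum $\tau$ embeds into some bounded interval $[f_\xi,f_\eta)$, where $\eta$ depends on $\tau$. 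Given one particular $\alpha$-sum, cut it into $\cf(\alpha)$ blocks and choose the indices recursively and adaptively. At limit stages $\mu<\cf(\alpha)\le\theta$, the supremum of the indices used so far stays below $\theta$ by regularity.
\end{itemize}
This is exactly how the limit stages you flag as the main obstacle are handled.

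The uniform route you propose instead cannot work. You want each interval between consecutive chosen points (the $(f_\xi,f_{\xi+1})$, or the $(c_{\xi+1},c_\xi)$ ``after thinning'') to absorb \emph{all} ${<}\alpha$-sums, and you expect a point of $F\cap C$ otherwise. Take $L=\omega+\omega^*$, $\alpha=\omega$, $\mathcal{A}=\{1\}$. Then $F$ is the $\omega$-part and $C$ is the $\omega^*$-part, so $F\cap C=\emptyset$. Every interval between two points of $F$, or between two points of $C$, is finite, so none embeds all finite orders. Moreover (3) fails, while (1) and (2) hold. So the thinning claim in Step~2 is false, and the dichotomy in your last paragraph does not exist: the intervals must depend on the particular sum being embedded.

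Finally, the endpoint cases you leave open follow from your Step~1 trick. If $C$ had a least element $c$, all ${<}\alpha$-sums would embed into $C=\{c\}+L_{>c}$. Then $\sigma+\tau$ embeds there for every ${<}\alpha$-sum $\tau$, giving $\tau\le L_{>c}$ and hence $c\in F\cap C$. Symmetrically, in Step~3, $F$ has no greatest element.
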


    \begin{proof}
        Suppose not and let $(L,<)$ be of type $\varphi$. We may assume that $\mathcal{A}$ contains at least some type $\geq 1$, otherwise the claim is trivial. Let $I_0$ be the largest initial segment (possibly empty) of $L$ such that for each $l \in I_0$, $(-\infty, l)$ does not embed each $<\alpha$-sum of $\mathcal{A}$. Similarly, let $I_1$ be the largest final segment (again, possibly empty) such that for each $l \in I_1$, $(l, \infty)$ does not embed each such type. Then note that there is no $l \in L$ with $I_0 < l < I_1$, since otherwise (3) is immediately seen to hold true with $\psi_0$ the type of $I_0$ and $\psi_1$ the type of $I_1$.\footnote{$I_0 < l < I_1$ means $\forall a \in I_0 \forall b \in I_1 (a < l < b)$, as expected.}

        By Lemma~\ref{lem:simpdecomp}, one of $I_0$, $I_1$ must embed each $<\alpha$-sum. Suppose for now this is the case for $I_0$. $I_0$ can't be of the form $(-\infty, l]$, otherwise $(-\infty, l)$ already embeds all $<\alpha$-sums. Let $\theta = \cf(I_0)$ and $\langle l_\xi : \xi < \theta \rangle$ be cofinal in $I_0$.

        \begin{itemize}
            \item Case 1: $\theta \geq \cf(\alpha)$. By Lemma~\ref{lem:simpdecomp} again, each final segment $[l_\xi, \infty) \cap I_0$ of $I_0$ must embed each $<\alpha$-sum; in particular, also in a non-cofinal way. We easily obtain that each $\alpha$-sum embeds in $I_0$, thus in $L$. 
            \item Case 2: $\theta < \cf(\alpha)$. For each $\xi < \theta$, let $\tau_\xi$ be a $<\alpha$-sum not embedding into $(-\infty, l_\xi)$. Then $\tau = \sum_{\xi < \theta} \tau_\xi + 1$ is still a subtype of a $<\alpha$-sum and embeds into $I_0$. We obtain an immediate contradiction to the cofinality of $\langle l_\xi : \xi < \theta \rangle$.
        \end{itemize}

        Both cases yield a contradiction. Now suppose $I_1$ embeds every $<\alpha$-sum. The argument is analogous. Let $\langle l_\xi : \xi < \theta \rangle$ be coinitial in $I_1$, noting that $I_1$ can't have a minimal element. If $\theta \geq \cf(\alpha)$, we find that every $\cf(\alpha)^*$-sum embeds in $I_1$, because $1 + \psi$ embeds for every $\psi \in \mathcal{A}$. If $\theta < \cf(\alpha)$, we find that a subtype $1 + \sum_{\xi < \theta} \tau_\xi$ of a $<\alpha$-sum can't embed into $I_1$.
    \end{proof}

\begin{thm}\label{thm:sumorreverse}
    Let $\varphi$ be an order type, $\mathcal{A}$ a set of subtypes of $\varphi$ and $\alpha > 1$ an indecomposable ordinal such that every $<\alpha$-sum of $\mathcal{A}$ embeds in $\varphi$. Then one of the following holds. 
    \begin{enumerate}
        \item Every $\alpha$-sum of $\mathcal{A}$ embeds into $\varphi$.
        \item Every $\cf(\alpha)^*$-sum of $\mathcal{A}$ embeds into $\varphi$.
        \item $\eta \leq \varphi$, i.e. $\varphi$ is not scattered.
    \end{enumerate}
\end{thm}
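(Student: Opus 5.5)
We argue by contraposition: assume neither (1) nor (2) holds, and build a copy of $\eta$ inside $\varphi$ by repeatedly applying Proposition~\ref{prop:simpdecomp}. Call an order type $\chi$ \emph{good} if every $<\alpha$-sum of $\mathcal{A}$ embeds into $\chi$. By hypothesis $\varphi$ is good.

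The key observation is that (1) and (2) fail hereditarily. If $\chi \leq \varphi$, then any $\alpha$-sum or $\cf(\alpha)^*$-sum of $\mathcal{A}$ embedding into $\chi$ also embeds into $\varphi$. So for each fixed bad $\alpha$-sum and bad $\cf(\alpha)^*$-sum witnessing the failure of (1) and (2) for $\varphi$, these same sums fail to embed into every suborder of $\varphi$. The elements of $\mathcal{A}$ remain subtypes of any good suborder $\chi$, since a good $\chi$ embeds each single element of $\mathcal{A}$ (a $1$-sum, noting $1<\alpha$ as $\alpha>1$). Thus Proposition~\ref{prop:simpdecomp} applies to every good suborder $\chi$ of $\varphi$, and it must land in case (3): $\chi = \chi_0 + 1 + \chi_1$ with $\chi_0,\chi_1$ both good.

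We then run a dyadic splitting. Fix $L$ of type $\varphi$. Recursively, for each finite binary string $s$, define a convex suborder $L_s \subseteq L$ of good type, together with a point $x_s \in L_s$. Start with $L_\emptyset = L$. Given good $L_s$, decompose it via case (3) as $L_s = L_{s^\frown 0} + \{x_s\} + L_{s^\frown 1}$, with $L_{s^\frown 0}$ and $L_{s^\frown 1}$ good. Since the pieces are convex and nested, the points $x_s$ are arranged in the order induced by the in-order traversal of the binary tree: $x_{s^\frown 0 \frown t} < x_s < x_{s^\frown 1 \frown t'}$ for all strings $t, t'$. That order on $2^{<\omega}$ is countable, dense, and without endpoints, hence isomorphic to $\eta$ by Cantor's theorem. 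Therefore $\eta \leq \varphi$, which is (3).

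The only point needing care is the hereditary step: checking that the hypotheses of Proposition~\ref{prop:simpdecomp} really transfer to the good pieces. These hypotheses are that $\mathcal{A}$ consists of subtypes and that all $<\alpha$-sums embed. Both are immediate from goodness, and the failure of (1) and (2) transfers because it is a non-embeddability statement that is inherited downward. The recursion uses dependent choice to pick decompositions, which is harmless. I do not expect any real obstacle beyond this bookkeeping.
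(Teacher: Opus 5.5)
Your proposal is correct and is essentially the paper's proof: the failure of (1) and (2) is inherited by the good pieces, so Proposition~\ref{prop:simpdecomp} always lands in case (3), and a dyadic splitting of convex pieces yields a copy of $\eta$. Your explicit check that each element of $\mathcal{A}$ is still a subtype of every good piece (being a $1$-sum with $1<\alpha$) is a detail the paper leaves implicit.
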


\begin{proof}
    Suppose both (1) and (2) are not the case. Note that if $\psi_0$, $\psi_1$ are as in (3) of Proposition~\ref{prop:simpdecomp}, (1) and (2) must still fail for $\varphi$ replaced by $\psi_0$ and $\psi_1$. Thus the proposition can be applied inductively. Given $(L,<)$ of type $\varphi$, recursively define, for each $s \in 2^{<\omega}$, a convex subset $I(s)$ of $L$ embedding every $<\alpha$-sum of $\mathcal{A}$ and $l_s \in I(s)$, as follows. Let $I_\emptyset = L$. Having defined $I_s$, pick $l_s \in I_s$ so that both $I_{s^\frown 0} := (-\infty, l_s) \cap I_s$ and $I_{s^\frown 1}:= (l_s, \infty) \cap I_s$ embed all $<\alpha$-sums. Clearly, $\{ l_s : s \in 2^{<\omega} \}$ is a countable dense-in-itself suborder of $L$. \end{proof}

\begin{lemma}\label{lem:nogreatest}
    Let $(L,<)$ be a $\sigma$-scattered order without a greatest element. Then there there is $l \in L$, such that for each $k > l$, $\omega \times [l,k) \leq L$.\footnote{Recall that $\omega \times [l,k)$ carries the lexicographic ordering. So if $[l,k)$ has type $\varphi$, then $\omega \times [l,k)$ has type $\varphi \omega$.}
\end{lemma}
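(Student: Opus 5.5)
The plan is to argue by contradiction, using Laver's theorem that the $\sigma$-scattered order types are well-quasi-ordered under embeddability, together with an iteration trick. We tacitly assume $L \neq \emptyset$, as the statement requires. Suppose the conclusion fails. Then for every $l \in L$ there is some $k_l > l$ such that $\omega \times [l,k_l) \not\leq L$.

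First we build an increasing sequence. Fix any $l_0 \in L$ and recursively set $l_{n+1} := k_{l_n}$, so that $l_0 < l_1 < l_2 < \cdots$. Let $F_n := [l_n, \infty)$ be the corresponding final segments. Each $F_n$ is a suborder of $L$, hence $\sigma$-scattered, since the restriction of a countable cover by scattered pieces is again such a cover. By Laver's theorem there are $i < j$ with $F_i \leq F_j$. Fix an order-preserving $f \colon F_i \to F_j$.

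Next we iterate $f$. Put $J := [l_i, l_j)$, so that $F_i = J + F_j$ and $J < F_j$. Since $F_j \subseteq F_i$, the map $f$ sends $F_i$ into $F_i$, so all iterates $f^n$ are defined on $F_i$. Because $f$ is order-preserving and $J < F_j$, we get
\[
f^n(J) < f^n(F_j) \quad \text{for every } n,
\]
and moreover $f^{n+1}(J) \subseteq f^{n+1}(F_i) \subseteq f^n(F_j)$. Hence
\[
J < f(J) < f^2(J) < \cdots
\]
are pairwise disjoint blocks, each an order-isomorphic copy of $J$ via $f^n$. Mapping $(n,x) \mapsto f^n(x)$ therefore embeds $\omega \times J$, with the lexicographic order, into $F_i \subseteq L$.

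Finally we derive the contradiction. Since $j \geq i+1$, we have $l_{i+1} = k_{l_i} \leq l_j$, so $[l_i, k_{l_i}) \subseteq J$. Thus
\[
\omega \times [l_i, k_{l_i}) \leq \omega \times J \leq L,
\]
which contradicts the choice of $k_{l_i}$.

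The only deep input is Laver's theorem; the rest is elementary. The step needing care is checking that the iterated images really are successive disjoint blocks: this uses $f^{n+1}(F_i) \subseteq f^n(F_j)$, which follows from $F_j \subseteq F_i$. The hypothesis that $L$ has no greatest element guarantees that elements above each $l$ exist, and it is what makes the statement meaningful; in the contradiction argument itself it is only used implicitly through the choice of $k_l$.
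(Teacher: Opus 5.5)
Your proof is correct, and it takes a genuinely different route from the paper's.

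\textbf{The paper's route.} The paper fixes a cofinal increasing sequence $\langle l_\alpha : \alpha < \cf(L) \rangle$ and colours a pair $\alpha < \beta$ by whether $[l_\alpha, l_\beta)$ embeds into $[l_\beta, \infty)$. It then applies the Erd\H{o}s--Dushnik--Miller theorem. An infinite homogeneous set of the ``non-embedding'' colour is ruled out by applying Laver's theorem to the consecutive intervals $[l_{\alpha_i}, l_{\alpha_{i+1}})$. So there is a cofinal homogeneous set of the other colour, and $l = l_{\alpha_0}$ works: copies of $[l_{\alpha_0}, l_{\alpha_i})$ are placed into successively later intervals.

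\textbf{Your route.} You apply Laver's theorem to the final segments $[l_n,\infty)$ of a single $\omega$-sequence. You then get all $\omega$ copies by iterating one embedding $f \colon F_i \to F_j$. Since $f^{n+1}[F_i] \subseteq f^n[F_j]$, the blocks $f^n[J]$ are automatically separated.

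\textbf{What each buys.} Your argument needs neither a partition theorem nor a cofinal sequence of length $\cf(L)$. It also avoids the routine boundedness check left implicit in the paper's last step. There, an embedding of $[l_{\alpha_0}, l_{\alpha_i})$ into $[l_{\alpha_i},\infty)$ need not have bounded image, so one must restrict an embedding of a longer interval such as $[l_{\alpha_0}, l_{\alpha_{i+1}})$ to land inside some $[l_{\alpha_i}, l_{\alpha_j})$. In exchange, the paper's argument gives more structure: a cofinal set $H$ such that $[l_\alpha, l_\beta)$ embeds into $[l_\beta,\infty)$ for all $\alpha<\beta$ in $H$. Yours only produces some suitable $l$, by contradiction.
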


\begin{proof}
    Let $\theta$ be the cofinality of $(L, <)$ and $\langle l_\alpha : \alpha < \theta \rangle$ a cofinal and increasing sequence in $L$. Define a colouring $c \colon [\theta]^2 \to 2$, where $c(\alpha < \beta) = 0$, if $[l_\alpha, l_\beta)$ embeds in $[l_\beta, \infty)$, and $c(\alpha < \beta) = 1$ otherwise. By the Erd\H{o}s-Dushnik-Miller Theorem (see \cite[Theorem 5.22]{DushnikMiller1941}), there is a homogeneous subset $\{\alpha_i : i < \delta \}$ of $\theta$, written in increasing order, with either colour $1$ and $\delta = \omega$, or colour $0$ and $\delta = \theta$. The former case cannot occur. Otherwise, letting $\varphi_i$ be the type of $[l_{\alpha_i}, l_{\alpha_{i+1}})$, $\varphi_i \not\leq \varphi_j$, for all $i < j$. This contradicts the well-quasi-orderedness of the $\sigma$-scattered orders. So the latter case occurs. It is easy to see that $l = l_{\alpha_0}$ works, noting that for any $i < \theta$, there is $j > i$ so that $[l_{\alpha_0}, l_{\alpha_i})$ embeds in $[l_{\alpha_i}, l_{\alpha_j})$.
\end{proof}

\subsection{No scattered Hagendorf order} To finish our proof that scattered orders aren't Hagendorf, we first show that this is the case for orders of finite Hausdorff rank. This is the only place where the previous lemma is applied.

\begin{definition}\label{def:simpleorders}
    We denote with $\mathcal{S}_0$ the class of ordinals and reverse ordinals. Whenever $\mathcal{S}_n$ is defined, $\mathcal{S}_{n+1}$ consists of all the well-ordered and reverse well-ordered sums of orders in $\mathcal{S}_n$.
\end{definition}

\begin{prop}\label{lem:simplenoHO}
    Let $\varphi \in \mathcal{S}_n$, for some $n \in \omega$. Then $\varphi$ is not a Hagendorf order. 
\end{prop}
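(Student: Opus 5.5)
I would argue by induction on $n$, proving the statement simultaneously for all $\varphi \in \mathcal{S}_n$. Suppose $\varphi\in\mathcal{S}_n$ is Hagendorf, and take $n$ minimal with this property.

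\textbf{Base case.} For $n=0$, $\varphi$ is an ordinal, which is excluded by (3), or a reverse ordinal $\alpha^*$. In the latter case, strict indecomposability to the right forces $\alpha\le 1$, since $\alpha^* = \alpha^* + 1$ whenever $\alpha$ is a successor, and $\alpha^*$ splits off a proper final segment of the form $\beta^*$ with $\beta<\alpha$ when $\alpha$ is a limit. But $0$ and $1$ are ordinals.

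\textbf{Inductive step: $\varphi$ is a reverse well-ordered sum.} Now let $\varphi\in\mathcal{S}_{n+1}$ be a well-ordered or reverse well-ordered sum of members of $\mathcal{S}_n$, and let $(L,<)$ have type $\varphi$.

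\begin{itemize}
\item First suppose $\varphi$ is a reverse well-ordered sum $\sum_{i<\beta^*}\varphi_i$, with $\beta$ minimal and $\beta\ge 2$.
\item Take the final segment $\varphi_0$. By strict indecomposability to the right, $\varphi\le\varphi_0$, so $\varphi\equiv\varphi_0$.
\item Being Hagendorf is preserved under equimorphy, so $\varphi_0\in\mathcal{S}_n$ is Hagendorf, contradicting minimality of $n$.
\end{itemize}

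\textbf{Inductive step: $\varphi$ is a well-ordered sum.} So $\varphi=\sum_{i<\beta}\varphi_i$ with $\varphi_i\in\mathcal{S}_n$. If $\beta$ is a successor, the last summand is a proper final segment and again $\varphi\equiv\varphi_{\beta-1}\in\mathcal{S}_n$, a contradiction. So $\beta$ is a limit and $L$ has no greatest element. I would then aim to show that $\varphi$ embeds every ordinal below $|L|^+$, whence $\omega_1\le\varphi$, and in fact a large ordinal.

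\begin{itemize}
\item First, $\omega^*\le\varphi$, since $\varphi$ is not an ordinal and is scattered. So some summand, or the structure of the sum, contains a descending sequence.
\item Next I would show inductively, as in Galvin's argument, that every ordinal $\gamma$ embeds into $\varphi$. If $\gamma\le\varphi$, then $\gamma<\varphi$ (since $\omega^*\not\le\gamma$), so $\gamma+1\le\varphi$ by property (2).
\item At limits, use Theorem~\ref{thm:sumorreverse} with $\mathcal{A}$ consisting of the smaller ordinals, or a suitable set containing $1$, and $\alpha$ an indecomposable ordinal. Since $\varphi$ is scattered, either every $\alpha$-sum or every $\cf(\alpha)^*$-sum embeds.
\item Combined with strict indecomposability, this propagates embeddability of all ordinals. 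Taking $\gamma=|L|^+$ then gives a contradiction.
\end{itemize}

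\textbf{Main obstacle.} The $\cf(\alpha)^*$ alternative yields reverse types, not ordinals, so the limit step does not close directly. This is presumably where Lemma~\ref{lem:nogreatest} and finite rank enter. I would use the lemma to find $l$ such that $\omega\times[l,k)\le L$ for all $k>l$. Since $\omega^*$ embeds into some bounded interval $[l,k)$ (pushing it past $l$ via indecomposability), $L$ then contains $\omega\times(\omega^*)$ and more generally $\varphi'\omega$ for every bounded piece $\varphi'$.

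I would then argue that $\varphi\le \varphi'\omega$ for a suitable proper initial segment $\varphi'$, and that some summand $\varphi_i\in\mathcal{S}_n$ inherits the Hagendorf properties up to equimorphy. Concretely, property (2) applied to $\psi=$ the initial segment would give $\varphi\equiv\sum_{i<\omega}\varphi'$. Finite rank is then used to choose $\varphi'$ of rank $\le n$ equimorphic to its own $\omega$-multiple's pieces.

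Pinning down exactly which summand or initial segment is Hagendorf and of lower rank is the step I expect to be hardest. I would first try to show that the "largest" rank-$n$ block type, which is well-defined by well-quasi-orderedness, is itself Hagendorf.
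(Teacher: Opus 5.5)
Your base case and your reductions for reverse well-ordered sums and for well-ordered sums of successor length are correct. First discard empty summands, so that the last summand is a \emph{nonzero} final segment; strict indecomposability then makes $\varphi$ equimorphic to it. In the base case, $\alpha^*=\alpha^*+1$ holds for every infinite $\alpha$, while finite $\alpha^*$ are ordinals. But these are the easy cases. Everything rests on $\varphi=\sum_{i<\beta}\varphi_i$ with $\beta$ a limit. There no summand and no proper initial segment is equimorphic to $\varphi$, so minimality of $n$ gives you nothing directly, and here your proposal has a genuine gap.

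Embedding all ordinals stalls at limits of uncountable cofinality, as you note: Theorem~\ref{thm:sumorreverse} only gives a disjunction, and nothing in the definition rules out $\omega_1^*\le\varphi$. The paper's general scattered argument handles this two-sided situation only by invoking the present proposition, so that route is circular here. Your fallback also fails. Hagendorfness gives $\psi\cdot\omega\le\varphi$ for every $\psi<\varphi$, in particular for every proper initial segment, but never the reverse inequality $\varphi\le\psi\cdot\omega$ that you would need. Consequently, Lemma~\ref{lem:nogreatest} applied to $L$ itself tells you nothing new. Finally, well-quasi-orderedness does not give a largest block type: a wqo such as the countable ordinals can have no maximal element.

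The missing idea is an invariant that can be carried through the induction: every nonzero $\varphi\in\mathcal{S}_{n+1}$ has a subtype $\varphi'\in\mathcal{S}_n$ with $\varphi'\cdot\omega\not\le\varphi$. Given this, a Hagendorf $\varphi$ must be equimorphic to $\varphi'$, since $\varphi'<\varphi$ would force $\varphi'\cdot\omega\le\varphi$. So Hagendorfness descends to $\mathcal{S}_n$.

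The invariant is built summand-wise. Take $\varphi'=\varphi$ for an ordinal, $\varphi'=1$ for a reverse ordinal, and $\varphi'=\sum_\xi\varphi'_\xi$ for a sum, and argue by induction on the length $\alpha$ of the sum.
\begin{itemize}
\item Successor steps use that $\psi'\cdot\omega\not\le\psi$ and $\chi'\cdot\omega\not\le\chi$ imply $(\psi'+\chi')\cdot\omega\not\le\psi+\chi$.
\item At a limit $\alpha$, apply Lemma~\ref{lem:nogreatest} to the \emph{primed} sum $\sum_{\xi<\alpha}\varphi'_\xi$. This yields $\beta<\alpha$ with $(\sum_{\beta\le\xi<\gamma}\varphi'_\xi)\cdot\omega\le\sum_{\beta\le\xi<\alpha}\varphi'_\xi$ for all $\gamma\in[\beta,\alpha)$.
\item Now suppose $(\sum_{\beta\le\xi<\alpha}\varphi'_\xi)\cdot\omega$ embedded into $\sum_{\beta\le\xi<\alpha}\varphi_\xi$. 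Its first copy would land in a bounded piece $\sum_{\beta\le\xi<\gamma}\varphi_\xi$. That gives $(\sum_{\beta\le\xi<\gamma}\varphi'_\xi)\cdot\omega\le\sum_{\beta\le\xi<\gamma}\varphi_\xi$, contradicting the inductive hypothesis.
\end{itemize}
This is where Lemma~\ref{lem:nogreatest} and finite rank genuinely enter: the lemma transfers a \emph{non}-embedding from bounded pieces to the tail, rather than producing more embeddings into $\varphi$.
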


\begin{proof}
    We show by induction on $n$, that for any non-zero $\varphi \in \mathcal{S}_{n+1}$, there is $\varphi' \leq \varphi$, $\varphi' \in \mathcal{S}_n$, with \begin{equation}\label{eq:prop}
        \varphi' \cdot \omega \not\leq \varphi.
    \end{equation} Then, if $\varphi$ is Hagendorf, $\varphi'$ must be as well. Otherwise, if $\varphi'$ is not Hagendorf, then $\varphi'$ is not equimorphic to $\varphi$, so $\varphi'$ is a strict subtype of $\varphi$. But then we can show that $\varphi' \cdot \omega \leq \varphi$, using the Hagendorfness of $\varphi$ -- contradiction. This shows that if $n$ is minimal with a Hagendorf order in $\mathcal{S}_n$, then $n$ must be $0$, which is impossible.

    If $\varphi$ is an ordinal, stipulate that $\varphi' = \varphi$. If $\varphi$ is a reverse ordinal, let $\varphi' = 1$. Then (\ref{eq:prop}) is clearly satisfied. Now suppose that $\varphi \in \mathcal{S}_{n+1}$, say $\varphi = \sum_{\xi < \alpha} \varphi_\xi$. We define $\varphi' = \sum_{\xi < \alpha} \varphi_\xi'$. First note that indeed $\varphi' \in \mathcal{S}_n$. When $n = 0$, this is the case because $\varphi'$ is an ordinal. Otherwise, use the inductive assumption that $\varphi_\xi' \in \mathcal{S}_{n-1}$ for each $\xi$. Next, we show by induction on $\alpha$ that (\ref{eq:prop}) is satisfied. The case $\alpha = 1$ is clear, and the successor case follows from the simple observation that, if $\psi' \cdot \omega \not\leq \psi$ and $\chi' \cdot \omega \not\leq \chi$, then also $(\psi' + \chi') \cdot \omega \not\leq \psi + \chi$. For the limit case, apply Lemma~\ref{lem:nogreatest} and find $\beta < \alpha$, so that for each $\gamma \in [\beta, \alpha)$, $(\sum_{\beta \leq \xi < \gamma} \varphi_\xi')\cdot \omega \leq \sum_{\beta \leq \xi < \alpha} \varphi_\xi'$. If $(\sum_{\beta \leq \xi < \alpha} \varphi_\xi')\cdot \omega \leq \sum_{\beta \leq \xi < \alpha} \varphi_\xi$, then $$(\sum_{\beta \leq \xi < \gamma} \varphi_\xi')\cdot \omega \leq \sum_{\beta \leq \xi < \alpha} \varphi_\xi' \leq \sum_{\beta \leq \xi < \gamma} \varphi_\xi,$$ for some $\gamma < \alpha$, contradicting the inductive assumption. By writing $\varphi$ as $\sum_{\xi < \beta} \varphi_\xi + \sum_{\beta \leq \xi < \alpha} \varphi_\xi$, we find that $\varphi'$ works with the same argument as in the successor case. 

    The case of reverse sums $\varphi = \sum_{\xi < \alpha^*} \varphi_\xi$ is even simpler. We only need to consider $\alpha \geq \omega$, since otherwise, this is a well-ordered sum. If $n = 0$, let $\varphi' = \alpha^*$. For $n \geq 1$, we let $\varphi' =  \sum_{\xi < \alpha^*} \varphi_\xi'$. We leave the details to the reader.
    \end{proof}

\begin{thm}
    There is no scattered Hagendorf order.
\end{thm}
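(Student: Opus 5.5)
Suppose $\varphi$ is a scattered Hagendorf type, realized by $(L,<)$. The plan mirrors Galvin's argument (Theorem~\ref{thm:galvin}), using Theorem~\ref{thm:sumorreverse} in place of the countable fact and Proposition~\ref{lem:simplenoHO} to handle the base.

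\textbf{Step 1: every element of $\bigcup_n \mathcal{S}_n$ embeds into $\varphi$.} We prove by induction on $n$ that every order type $\psi \in \mathcal{S}_n$ satisfies $\psi \le \varphi$. The fact we use throughout is the following. If $\psi \le \varphi$ for a $\psi$ of finite Hausdorff rank, then $\psi < \varphi$. Indeed, otherwise $\varphi \equiv \psi$. Since Hagendorfness is preserved under equimorphy, $\psi$ would be Hagendorf, contradicting Proposition~\ref{lem:simplenoHO}. Hence, by Hagendorfness, $\psi + 1 \le \varphi$, and $\psi$ embeds into a proper initial segment of $\varphi$. Strict right indecomposability then lets us put a copy of any $\chi < \varphi$ after it. So whenever $\psi, \chi \le \varphi$ are both of finite rank, $\psi + \chi \le \varphi$.

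Iterating this, every finite sum of members of $\mathcal{A}_n := \mathcal{S}_n$ embeds. Now apply Theorem~\ref{thm:sumorreverse} with $\mathcal{A}=\mathcal{A}_n$ and increasing indecomposable $\alpha$, by induction on $\alpha$, to show that every $\alpha$-sum and every $\alpha^*$-sum of $\mathcal{A}_n$ embeds into $\varphi$. Since $\varphi$ is scattered, option (3) is excluded.

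\begin{itemize}
\item \emph{Limit $\alpha$.} A $<\alpha$-sum is, up to embeddability, a finite sum of sums over smaller indecomposables, so the induction hypothesis together with the finite-sum fact gives the hypothesis of the theorem.
\item \emph{Successor indecomposable, $\alpha=\beta\omega$.} Sums of length $<\alpha$ are finite sums of $\beta$-sums, which are already handled.
\end{itemize}

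Option (1) of the theorem gives all $\alpha$-sums. Option (2) gives all $\cf(\alpha)^*$-sums. The reverse case is handled symmetrically, by applying the same reasoning to reverse sums, or by noting that an $\alpha$-sum splits into pieces whose lengths have smaller cofinality structure. This is the main bookkeeping obstacle: option (2) only gives $\cf(\alpha)^*$-sums, not $\alpha$-sums. I would handle it by showing that, because all $\cf(\alpha)^*$-sums of $<\alpha$-sums embed, we still obtain $\sum_{\xi<\alpha}$ for each fixed summand sequence after regrouping into blocks. If that fails, I would instead note that $\omega^*$-sums already yield non-well-ordered pieces, and push on.

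Iterating over all ordinals $\alpha$ below $|L|^+$ shows that every well-ordered and reverse well-ordered sum of members of $\mathcal{S}_n$ of length less than $|L|^+$ embeds in $\varphi$. In particular this gives $\mathcal{S}_{n+1}$ restricted to small lengths, which suffices, since longer sums cannot embed in $L$ for cardinality reasons. To avoid this issue, restrict the induction to types of cardinality $\le |L|$ from the start.

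\textbf{Step 2: contradiction.} Having all $\alpha$-sums of $\{1\}$ for $\alpha<|L|^+$, we get $|L|^+ \le \varphi$ if we take $\alpha = |L|^+$ itself. This is allowed, since Theorem~\ref{thm:sumorreverse} at the indecomposable $\alpha=|L|^+$ needs only all $<\alpha$-sums. The theorem then yields either $|L|^+\le\varphi$ or $(|L|^+)^*\le\varphi$, both impossible by cardinality. The key point to check carefully is Step 1's finite-sum fact and the cofinality regrouping in the successor/limit bookkeeping.
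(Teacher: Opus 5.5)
\textbf{There is a genuine gap.} Your Step 1 breaks down exactly at the point you flag as the ``bookkeeping obstacle'', and that point is the entire content of the theorem. Take already $\mathcal{A}=\{1\}$. If $\alpha$ is the least ordinal not embedding into $\varphi$, your finite-sum observation shows that $\alpha$ is indecomposable. But Theorem~\ref{thm:sumorreverse} then only gives $\cf(\alpha)^*\le\varphi$, which is not contradictory by itself.

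Your proposed repair, regrouping $\cf(\alpha)^*$-sums into an $\alpha$-sum, cannot work, because reverse sums never produce forward ones. For instance, $\sum_{\xi<\omega_1^*}\xi$ is scattered, embeds every countable ordinal and $\omega_1^*$, but does not embed $\omega_1$. The fallback about $\omega^*$-sums is not an argument. Note also that by your own Step 2, Step 1 restricted to ordinals already yields the contradiction. So the induction on $n$ reduces nothing: its base case is the whole theorem.

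The missing idea is to turn option (2) of Theorem~\ref{thm:sumorreverse} against a \emph{second} minimal counterexample instead of trying to avoid it. This is what the paper does.
\begin{enumerate}
\item Let $\mathcal{A}_n=\{\psi\in\mathcal{S}_n:\psi\le\varphi\}$.
\item Let $\alpha_{0,n}$ be the least length of a well-ordered sum of $\mathcal{A}_n$ not embedding into $\varphi$, and $\alpha_{1,n}$ the same for reverse well-ordered sums. Since $\mathcal{A}_n\subseteq\mathcal{A}_{n+1}$, both sequences are non-increasing in $n$, so they stabilize at some $\theta_0,\theta_1$.
\item Your finite-sum fact (via Proposition~\ref{lem:simplenoHO}) shows that $\theta_0,\theta_1$ are limit ordinals.
\item Stabilization shows they are regular. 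If $\cf(\theta_0)<\theta_0$, regroup a non-embedding $\theta_0$-sum of $\mathcal{A}_n$ into blocks of length $<\theta_0$. Each block embeds and lies in $\mathcal{S}_{n+1}$, so this gives a non-embedding $\cf(\theta_0)$-sum of $\mathcal{A}_{n+1}$. Hence $\alpha_{0,n+1}<\theta_0$, contradicting stabilization.
\item Regularity gives $\cf(\theta_0)^*=\theta_0^*$. So Theorem~\ref{thm:sumorreverse} at $\theta_0$ forces every $\theta_0^*$-sum of $\mathcal{A}_n$ to embed, i.e.\ $\theta_0<\theta_1$. Symmetrically $\theta_1<\theta_0$, a contradiction.
\end{enumerate}

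Working only with ordinals, you would obtain just $\cf(\theta_0)<\theta_1$ and $\cf(\theta_1)<\theta_0$, which is no contradiction. This is why the passage through the hierarchy $\mathcal{S}_n$ is needed.
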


\begin{proof}
Suppose $\varphi$ is the order-type of a scattered Hagendorf order. For each $n \in \omega$, let $\mathcal{A}_n := \{\psi \in \mathcal{S}_n : \psi \leq \varphi \}$, and let $\alpha_{0,n}$ be the least ordinal such that some $\alpha_{0,n}$-sum of $\mathcal{A}_n$ does not embed in $\varphi$. Similarly, let $\alpha_{1,n}$ be least such that some $(\alpha_{1,n})^*$-sum of $\mathcal{A}_n$ does not embed in $\varphi$. Note that $\langle \alpha_{i,n} : n \in \omega \rangle$ is non-increasing, for $i=0,1$. Thus, there is a point $n$ from which both of these sequences are constant. We define $\theta_0 := \alpha_{0,n}$ and $\theta_1 := \alpha_{1,n}$.

\begin{claim}
    $\theta_0$ and $\theta_1$ are regular cardinals.
\end{claim}

\begin{proof}
     First, $\theta_0$ is a limit ordinal. Namely, whenever $\psi = \sum_{\xi < \alpha} \psi_\xi$ is sum of $\mathcal{A}_n$ that does embed in $\varphi$, we have that $\psi < \varphi$. Otherwise, $\varphi \leq \psi$ and $\psi$ is also Hagendorf, contradicting Proposition~\ref{lem:simplenoHO}. Thus $\psi$ embeds into a proper initial segment. Any $\nu \in \mathcal{A}_n$ embeds into any final segment of $\varphi$, by strict indecomposability. But then also $\psi + \nu = \sum_{\xi < \alpha} \psi_\xi + \nu$ embeds in $\varphi$. So $\theta_0 = \alpha +1$ is impossible. Similarly, $\theta_1$ is a limit.
     
     Now suppose that $\cf(\theta_0) < \theta_0$ and that $\sum_{\xi < \theta_0} \psi_\xi$ is a $\theta_0$-sum of $\mathcal{A}_n$ that does not embed in $\varphi$. By assumption, $\sum_{\xi < \delta} \psi_\xi \in \mathcal{A}_{n+1}$ for each $\delta < \theta_0$. Clearly then, we find a $\cf(\theta_0)$-sum of $\mathcal{A}_{n+1}$ not embedding in $\varphi$. But then $\alpha_{0,n+1} \leq \cf(\theta) < \alpha_{0,n} = \theta_0$. The argument for $\theta_1$ is completely analogous.
\end{proof}

Theorem~\ref{thm:sumorreverse} implies that $\theta_0 < \theta_1$ and that $\theta_1 < \theta_0$ -- contradiction.  \end{proof}

\section{First consistent examples}

\subsection{Baumgartner's Axiom}

Recall that a subset $A \subseteq \mathbb{R}$ is said to be \emph{$\aleph_1$-dense} if the intersection of $A$ with any non-empty open subset of $\mathbb{R}$ is of size $\aleph_1$.

\begin{definition}[Baumgartner's Axiom]
    $\BA$ is the statement that any two $\aleph_1$-dense suborders of $(\mathbb{R}, <)$ are isomorphic.
\end{definition}

\begin{thm}\label{thm:BA=>Hagendorf}
    $\BA$ implies that there is a Hagendorf order of size $\aleph_1$.
\end{thm}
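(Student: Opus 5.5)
The plan is to build the order explicitly as an $\omega_1$-indexed sum of copies of a fixed real type. The only use of $\BA$ is through one rigidity fact:
\begin{quote}
($\ast$) every uncountable $X \subseteq \mathbb{R}$ contains a copy of $B$, where $B$ is a fixed $\aleph_1$-dense set of reals.
\end{quote}
To prove ($\ast$), remove the countably many points of $X$ that are not complete accumulation points, and pick an open interval $J$ in which what remains is $\aleph_1$-dense. A homeomorphism $J \cong \mathbb{R}$ turns this set into an $\aleph_1$-dense set of reals, so it is isomorphic to $B$ by $\BA$. By the same reasoning every nonempty open interval of $B$, and every uncountable suborder of $B$, contains a copy of $B$; in particular $B \equiv B+B$.

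As a first candidate I take $\varphi = B\cdot\omega_1$, that is, $\omega_1$ consecutive copies $B_\xi$ of $B$.
\begin{itemize}
\item \textbf{Not an ordinal.} This is clear.
\item \textbf{Strictly indecomposable to the right.} Every nonempty final segment contains all later blocks, so it contains a copy of $\varphi$.
\item \textbf{Proper initial segments are strictly smaller.} A proper initial segment lies inside $B\cdot\alpha$ for some countable $\alpha$. This is a countable sum of real types, hence a real type, so it cannot contain $\omega_1 \le \varphi$.
\end{itemize}

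For clause (2), let $\psi \le \varphi$, realised as $\psi = \sum_{\xi<\omega_1}\psi_\xi$ with $\psi_\xi \subseteq B_\xi$.
\begin{itemize}
\item If only countably many $\psi_\xi$ are nonempty, then $\psi \le B\cdot\alpha$ for some $\alpha<\omega_1$, and so $\psi + 1 \le B\cdot(\alpha+1) \le \varphi$.
\item If uncountably many $\psi_\xi$ are uncountable, then ($\ast$) puts a copy of $B$ in each of them. Summing these copies gives $\varphi \le \psi$, so $\psi \not< \varphi$ and there is nothing to check.
\end{itemize}

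The main obstacle is the remaining case: there are unboundedly many nonempty $\psi_\xi$, but only countably many of them are uncountable. The typical instance is $\psi = \eta\cdot\omega_1$, or even $\psi = \omega_1$. For the naive candidate this case is genuinely fatal. Any $\omega_1$-sequence in $B\cdot\omega_1$ has unbounded block index, since a single real block contains no copy of $\omega_1$. Hence $\omega_1 + 1 \not\le B\cdot\omega_1$, while $\omega_1 < B\cdot\omega_1$. So the blocks must be enlarged to absorb such ``thin'' $\omega_1$-sums.

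I would therefore replace $B$ by a block type $C$ with $B \le C$ and $\eta\cdot\omega_1 \le C$, and take $\varphi = C\cdot\omega_1$. More systematically, I would close up under the operation ``$\omega_1$-sums of strict subtypes'' so that the following absorption property holds:
\begin{quote}
whenever $\psi_\xi \subseteq C$ for $\xi<\omega_1$ and only countably many $\psi_\xi$ contain $C$, then $\sum_{\xi<\omega_1}\psi_\xi + 1 \le C$.
\end{quote}
I expect this to come down to two facts. First, the strict subtypes of a block are essentially real types up to a countable-length part. Second, by ($\ast$) every real type of size at most $\aleph_1$ embeds into $B$. Given absorption, the three-case analysis above goes through for $C\cdot\omega_1$.

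The delicate verification is that $C$ itself remains small enough that $C\cdot\alpha$ for $\alpha<\omega_1$ still does not embed $\varphi$. I would do this by a rank/cofinality argument: an embedding of $\varphi$ into $C\cdot\alpha$ would force uncountably many blocks into a single block, and I would check that this is impossible. Finding the right $C$, and checking this last point, is where I expect the real work to lie.
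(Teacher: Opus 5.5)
There is a genuine gap. Your analysis of the naive candidate is correct: $\omega_1<B\cdot\omega_1$ but $\omega_1+1\not\le B\cdot\omega_1$. However, the proposal stops exactly where the proof has to begin, because no order is ever produced. The block type $C$ is never constructed.

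The absorption property you need is not a reduction of the problem. It is a blockwise restatement of clause (2) for $C\cdot\omega_1$, and in fact a strengthening, since it also puts the sum inside a single block. For instance, applied to $\psi_\xi=C$ for $\xi<\alpha$ it already gives $C\cdot\alpha+1\le C$, so the final check $C\cdot\omega_1\not\le C\cdot\alpha$ is the original difficulty in disguise.

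The heuristic you hope to use ("strict subtypes of a block are essentially real types up to a countable-length part") fails for every viable $C$. If $C\cdot\omega_1$ is Hagendorf, then $\omega_1<C\cdot\omega_1$ forces $\omega_1+1\le C\cdot\omega_1$. Since $\omega_1$ is regular, a tail of that copy of $\omega_1$ lies in one block, so $\omega_1\le C$. Repeating the argument with $\omega_1^2\le C\cdot\omega_1$ gives $\omega_1^2\le C$, and so on. Your own requirement $\eta\cdot\omega_1\le C$ already makes $\omega_1$ a strict subtype of $C$ that is not of the kind you describe.

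The missing idea is to avoid $\omega_1$ altogether and let complexity grow through lexicographic products rather than long sums. The paper takes $\sum_{n<\omega}B^n$ with $B$ $\aleph_1$-dense. This order has no copy of $\omega_1$: a non-decreasing $\omega_1$-sequence of reals is eventually constant, and one inducts on coordinates. Two facts drive the proof.
\begin{enumerate}
\item $B^n<B^{n+1}$ for uncountable real $B$. Given an embedding $e\colon B^{n+1}\to B^n$, either some fiber $\{a\}\times B^n$ is sent into a single first coordinate, contradicting induction, or choosing a rational between two first coordinates of $e[\{a\}\times B^n]$ injects $B$ into $\mathbb{Q}$.
\item Under $\BA$, every $\psi<B^{n+1}$ embeds into $B^n\times\mathbb{Q}$. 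This is proved by induction on $n$ by splitting $A\subseteq B^{n+1}$ into fibers over the first coordinate. If uncountably many fibers contain $B^n$, your fact $(\ast)$ gives $B^{n+1}\le A$. Otherwise $A\le(\sum_a L_a)\times B^{n-1}\times\mathbb{Q}$, where $\sum_a L_a$ is a separable order of size $\aleph_1$ and hence embeds into $B$.
\end{enumerate}
Now let $A\subseteq\sum_n B^n$. Either $A$ contains arbitrarily high powers $B^k$ in later levels, giving $\sum_n B^n\le A$. Or all its level pieces embed into a fixed $B^n\times\mathbb{Q}$, giving $A\le\omega\times B^n\times\mathbb{Q}\le B^{n+2}$, which is a proper initial segment.

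Your $(\ast)$ is precisely the $n=0$ case of fact 2. The right move was to iterate that dichotomy through products rather than through $\omega_1$-sums.
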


The theorem is a consequence of the following propositions.

\begin{prop}\label{prop:BAhag}
    Let $\varphi \geq \eta$ be an order type such that for every $n \in \omega$,
    \begin{enumerate}  
        \item \label{eq:first} $\varphi^n < \varphi^{n+1}$,
        \item \label{eq:second} whenever $\psi < \varphi^{n+1}$, then $\psi \leq \eta \varphi^n$.
    \end{enumerate}
    Then $\sum_{n < \omega} \varphi^n$ is Hagendorf.
\end{prop}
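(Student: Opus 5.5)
Write $\Phi = \sum_{n<\omega}\varphi^n$, where $\varphi^n$ is the product of $n$ copies of $\varphi$ (with $\varphi^0 = 1$). I would verify the three clauses of the definition of Hagendorf in turn. The basic observation is that, since $\eta \leq \varphi$, we have $\eta\varphi^n \leq \varphi\cdot\varphi^n = \varphi^{n+1}$. Hypothesis (2) therefore says that everything strictly below $\varphi^{n+1}$ already embeds into $\eta\varphi^n$, which is a copy of $\varphi^n$ blown up by $\eta$. Two further facts will be used. First, $\varphi^m \leq \varphi^{n}$ for $m\leq n$, since a point embeds into $\varphi$. Second, $\eta\varphi^n$ absorbs sums: $\eta\varphi^n + 1 + \eta\varphi^n \leq \eta\varphi^n$, and indeed any countable sum (along $\eta$) of copies of $\eta\varphi^n$ embeds into $\eta\varphi^n$, because $\eta\cdot\eta=\eta$.

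\emph{Clause (3).} $\Phi \geq \eta$ contains a copy of $\omega^*$, so it is not an ordinal.

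\emph{Clause (1), strict indecomposability to the right.} Suppose $\Phi = A + B$ with $B\neq 0$. Then $B$ contains a final segment $\sum_{m>n}\varphi^m$ together with a nonempty final piece of $\varphi^n$. The tail $\sum_{m\geq n+1}\varphi^m$ embeds all of $\Phi$: map $\varphi^k$ into $\varphi^{n+1+k}$ using $\varphi^k\leq\varphi^{n+1+k}$. Hence $\Phi\leq B$.

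For $A<\Phi$, note that $A$ is contained in $\sum_{m\leq n}\varphi^m \leq \varphi^{n+1}$ for some $n$. It then suffices to show $\Phi\not\leq\varphi^{n+1}$ for every $n$. If $\Phi\leq\varphi^{n+1}$, then $\varphi^{n+2}\leq\Phi\leq\varphi^{n+1}$, contradicting hypothesis (1).

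\emph{Clause (2).} Let $\psi<\Phi$ and fix an embedding $\psi\to\Phi$. I split according to how many of the blocks $\varphi^m$ the image meets.

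If the image is bounded, i.e. lies inside $\sum_{m\leq n}\varphi^m$, then $\psi+1$ embeds by adding a point in $\varphi^{n+1}$.

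If the image is cofinal, I use that $\psi\not\geq\Phi$. There must be some $n$ with $\varphi^{n+1}\not\leq\psi$, since otherwise stacking the copies would give $\Phi\leq\psi$. This step looks delicate: we only know that no copy of $\Phi$ exists, not that no single copy of $\varphi^{n+1}$ exists. So I would argue via final segments instead. Let $\psi_k$ be the preimage of the block $\varphi^k$, so $\psi_k\leq\varphi^k$. For each $k$, either $\psi_k\equiv\varphi^k$ or $\psi_k<\varphi^k$.

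\begin{itemize}
\item If infinitely many $\psi_k\equiv\varphi^k$, then $\psi\geq\sum_j\varphi^{k_j}\geq\Phi$, a contradiction.
\item So from some $N$ on, $\psi_k<\varphi^k$, hence $\psi_k\leq\eta\varphi^{k-1}$ by hypothesis (2).
\end{itemize}

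Now I re-embed. The initial part $\sum_{k<N}\psi_k$ stays in place. Each $\psi_k$ for $k\geq N$ goes into $\eta\varphi^{k-1}\leq\varphi^k$, leaving room. Concretely, $\eta\varphi^{k-1}+1\leq\eta\varphi^{k-1}\leq\varphi^k$, so I can embed $\sum_{k\geq N}\psi_k$ into $\sum_{k\geq N}\varphi^k$ in a way that omits points. But "$+1$" must go at the very end, and $\Phi$ has no last block. The fix is to shift indices. I send $\psi_k$ (for $k\geq N$) into $\varphi^k$ as above, and send $\psi_{N-1},\dots$ (the bounded part) into earlier blocks. Then I use the extra room to compress the whole of $\psi$ into a proper initial segment, exploiting that $\Phi$ strictly exceeds the shifted sum. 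More simply: $\psi\leq\sum_{k<N}\varphi^k+\sum_{k\geq N}\eta\varphi^{k-1}$, which I would embed into $\Phi$ below a point as follows.
\begin{itemize}
\item $\sum_{k<N}\varphi^k$ goes to the first $N$ blocks.
\item $\sum_{k\geq N}\eta\varphi^{k-1}$ goes into the single block $\varphi^{N}\geq\eta\varphi^{N-1}$? That fails, since the sizes grow with $k$.
\end{itemize}
This is the main obstacle: controlling an unbounded sum. I expect to handle it by checking the $\eta$-absorption more cleverly, namely $\sum_{k\geq N}\eta\varphi^{k-1}$ against the tail of $\Phi$ with one point reserved. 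Since the target must be a proper initial segment, one point $p$ in block $N$ should suffice, with everything else placed after it; a "$+1$" on the left is then handled by using the left-right symmetry of absorption inside $\eta\varphi^{k}$. I would try putting $\psi$'s tail into $\Phi$'s tail and use the reserved point together with strict indecomposability to get $\psi+1\leq\Phi$.
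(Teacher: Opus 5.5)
Your clauses (1) and (3) are fine, but clause (2), the heart of the proposition, is not proved. The obstacle you hit is not a technicality. In the cofinal case, your dichotomy ``infinitely many $\psi_k\equiv\varphi^k$ or not'' extracts too little from $\Phi\not\leq\psi$. The bound you obtain, $\psi\leq\sum_{k<N}\varphi^k+\sum_{k\geq N}\eta\varphi^{k-1}$, cannot be rescued by any absorption trick, because its right-hand side already embeds $\Phi$ (as $\varphi^{k-1}\leq\eta\varphi^{k-1}$ and every tail of $\Phi$ embeds $\Phi$). So the bound is compatible with $\psi\geq\Phi$ and carries no information. Your closing idea of putting $\psi$'s tail into $\Phi$'s tail cannot work either: $\psi+1\leq\Phi$ says exactly that $\psi$ fits below a point, i.e.\ into finitely many blocks. (Incidentally, $\eta\eta=\eta$ gives $\eta\eta\varphi^n=\eta\varphi^n$. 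It does not give that the $\eta$-sum $\eta\varphi^n\eta$ of copies of $\eta\varphi^n$ embeds into $\eta\varphi^n$.)

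What is missing is a \emph{uniform} bound on the pieces $\psi_m$. The right dichotomy is the following. Either for every $n$ there is $m>n$ with $\varphi^n\leq\psi_m$, and then $\Phi\leq\psi$. Or there is a fixed $n$ with $\varphi^n\not\leq\psi_m$ for all $m>n$.

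In the second case you must show $\psi_m\leq\eta\varphi^n$ for every $m$. This needs repeated descent through hypothesis (2), not a single application, and it goes as follows (this is how the paper argues).
\begin{enumerate}
\item Take $k$ minimal with $\psi_m\leq\eta\varphi^k$.
\item Choose one point of $\psi_m$ in each fibre of the projection $\eta\varphi^k\to\varphi^k$. This gives $\psi_m'\leq\varphi^k$ with $\psi_m\leq\eta\psi_m'$.
\item If $\varphi^k\not\leq\psi_m'$, then (2) gives $\psi_m'\leq\eta\varphi^{k-1}$. Hence $\psi_m\leq\eta\eta\varphi^{k-1}=\eta\varphi^{k-1}$, contradicting minimality.
\item So $\varphi^k\leq\psi_m$, which forces $k<n$.
\end{enumerate}
Consequently $\psi\leq(\eta\varphi^n)\omega\leq\varphi^{n+2}$, which lies inside a single block of $\Phi$. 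This gives $\psi+1\leq\Phi$.
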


\begin{proof}
    Condition (1) clearly implies that $\sum_{n < \omega} \varphi^n$ is strictly indecomposable to the right. Also, this can't be an ordinal since it embeds the rationals. Now let $X$ be a linear order of type $\varphi$ and suppose that $A$ is a suborder of $\sum_{n < \omega} X^n$. Call $A_n$ the intersection of $A$ with the copy of $X^n$.
    
    \underline{Case 1}: For any $n \geq 1$, there is $m > n$ such that $X^n \leq A_m$. Then clearly $\sum_{n < \omega} X^n \leq A$.

    \underline{Case 2}: There is $n$ such that $X^n \not\leq A_m$, for any $m > n$. Of course $A_m \leq X^m \leq X^n$, for $m \leq n$. Now let $m > n$ and $k$ be least so that $A_m \leq X^{k} \times \mathbb{Q}$. We claim that $X^k \leq A_m$, so $k < n$ and $A_m \leq X^n \times \mathbb{Q}$. To see this, let $e \colon A_m \to X^{k} \times \mathbb{Q}$ be an order-embedding and let $A_m' \subseteq A_m$ be such that for each $a \in A_m$, there is exactly one $a' \in A_m'$, with $e(a) \restriction k = e(a') \restriction k$.\footnote{$e(a) \in X^{k} \times \mathbb{Q}$ is a tuple of length $k+1$, so $e(a) \restriction k$ is it's restriction to the factor $X^k$.} We have that $A_m' \leq X^{k}$ by construction. By (2), either $X^k \leq A_m'$, so also $X^k \leq A_m$, or $A_m' \leq X^{k-1} \times \mathbb{Q}$.\footnote{Of course if $k = 0$, then $X^k \leq A_m$.} In the latter case, it is easy to see that $A_m \leq X^{k-1} \times \mathbb{Q} \times \mathbb{Q} \cong X^{k-1} \times \mathbb{Q}$. 
    This poses a contradiction to the minimality of $k$.
    
    Finally, we have shown that $A_m \leq X^n \times \mathbb{Q}$ for every $m$. Since $\mathbb{Q} \leq X$, $A \cong \sum_{m < \omega} A_m \leq \omega \times (X^{n} \times \mathbb{Q}) \leq X^{n+2}$ which is contained in a proper initial segment of $\sum_{n < \omega} X^n$.
    \end{proof}

\begin{prop}
\label{prop:prod-ineq}
    Let $A_0, \dots, A_{n+1}$ be uncountable suborders of $(\mathbb{R},<)$. Then $A_0 \times \dots \times A_n < A_0 \times \dots \times A_n \times A_{n+1}$.
\end{prop}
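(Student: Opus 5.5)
We argue by induction on $n$, simultaneously for all sequences of uncountable suborders of $(\mathbb{R},<)$. Throughout, products carry the lexicographic ordering with $A_0$ the most significant coordinate, and we write $P = A_0 \times \dots \times A_n$ and $Q = A_0 \times \dots \times A_n \times A_{n+1}$.

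The inequality $P \leq Q$ is immediate: fix any $c \in A_{n+1}$ and send $(a_0,\dots,a_n)$ to $(a_0,\dots,a_n,c)$. It therefore remains to show $Q \not\leq P$. Suppose towards a contradiction that $f \colon Q \to P$ is an order-embedding.

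Write $Q$ as an $A_0$-indexed sum of the blocks $B_a = \{a\} \times A_1 \times \dots \times A_{n+1}$, for $a \in A_0$. For each $a$, let $J_a \subseteq \mathbb{R}$ be the convex hull of the set of first coordinates of points in $f[B_a]$. If $a < a'$, then every point of $f[B_a]$ lies below every point of $f[B_{a'}]$. Consequently every first coordinate occurring in $f[B_a]$ is $\leq$ every first coordinate occurring in $f[B_{a'}]$. Hence $J_a$ and $J_{a'}$ intersect in at most one point, namely a common endpoint.

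\textbf{Key step.} Suppose every $J_a$ were nondegenerate. Then $\{J_a : a \in A_0\}$ would be an uncountable family of nondegenerate real intervals with pairwise disjoint interiors. Choosing a rational in each interior gives an injection of the uncountable set $A_0$ into $\mathbb{Q}$, which is impossible. In fact all but countably many $J_a$ are singletons, but we need only one.

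So fix $a$ with $J_a = \{b\}$. Then $f[B_a] \subseteq \{b\} \times A_1 \times \dots \times A_n$, and the lexicographic order on this fiber is just the order on the remaining coordinates. Composing with the projection onto those coordinates therefore yields an order-embedding
\[ A_1 \times \dots \times A_{n+1} \to A_1 \times \dots \times A_n. \]
For $n \geq 1$ this contradicts the induction hypothesis applied to the sequence $A_1,\dots,A_{n+1}$.

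For the base case $n=0$ we argue directly. Here $f \colon A_0 \times A_1 \to A_0$, so each $f[B_a]$ is already a subset of $\mathbb{R}$ with at least two points, since $A_1$ is uncountable. Thus each $J_a$ is a nondegenerate interval, and the key step above gives a contradiction immediately.

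The only substantive point is this disjoint-intervals argument. It is where uncountability of $A_0$ is used; the uncountability of the other factors enters only through the induction hypothesis and the base case.
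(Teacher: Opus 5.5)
Your proof is correct and is essentially the paper's argument. Your convex hulls $J_a$ play the role of the paper's sets $E(a)$ of first coordinates of $e[\{a\}\times A_1\times\dots\times A_{n+1}]$, and the paper uses the same dichotomy: if some $E(a)$ is a singleton, projecting away the first coordinate contradicts the induction hypothesis; otherwise a rational between two points of each $E(a)$ injects $A_0$ into $\mathbb{Q}$, and your base case is this same disjoint-intervals argument, which the paper phrases as $A_0\times A_1$ containing uncountably many disjoint nonempty open intervals.
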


\begin{proof}
    This is an induction on $n \in \omega$. Clearly $A_0 < A_0 \times A_1$ as $A_0 \times A_1$ contains uncountably many disjoint non-empty open intervals. This is impossible for a suborder of the reals. Now suppose towards a contradiction that $e \colon A_0 \times \dots \times A_{n+1} \to A_0 \times \dots \times A_n$ is an order-embedding. For any $a \in A_0$, define $$E(a) := \{e(a^\frown \bar x )_0 : \bar x \in A_1 \times \dots\times A_{n+1}\} \subseteq A_0.\footnote{$e(a^\frown \bar x )_i$ stands for the $i$'th component of $e(a^\frown \bar x)$.}$$

    \underline{Case 1}: There is $a \in A_0$ such that $E(a)$ is a singleton. Then note that $$f(\bar x) := e(a^\frown \bar x ) \restriction \{1, \dots, n \}$$ defines an order embedding of $A_{1} \times \dots \times A_{n+1}$ into $A_1 \times \dots \times A_{n}$, which is impossible by the inductive assumption.
    
    \underline{Case 2}: For any $a \in A_0$, $E(a)$ contains at least two elements. Pick a rational $r_a$ that lies between two elements of $E(a)$. Then note that $a \mapsto r_a$ defines an injection of $A_0$ into the rationals. But $A_0$ is assumed to be uncountable -- contradiction. 
\end{proof}

\begin{prop}
    Assume $\BA$ and let $\varphi$ be the order type of an $\aleph_1$-dense suborder of $\mathbb{R}$. Whenever $\psi < \varphi^{n+1}$, then $\psi \leq \eta \varphi^n$.
\end{prop}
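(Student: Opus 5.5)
We argue by induction on $n$, using the same ``projection'' idea as in Case 2 of Proposition~\ref{prop:BAhag}. Let $X$ be an $\aleph_1$-dense suborder of $\mathbb{R}$ of type $\varphi$, so $\varphi^{n+1}$ is the type of $X^{n+1}$ under the lexicographic order. Let $A \subseteq X^{n+1}$ with $X^{n+1} \not\leq A$. The goal is $A \leq X^n \times \mathbb{Q}$.

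\textbf{Base case $n=0$.} Here $A \subseteq X$ is a set of reals with $X \not\leq A$. We claim that $A$ is countable or fails to be $\aleph_1$-dense somewhere, and more precisely that $A$ has the form of countably many pieces each embeddable in $X$ in a controlled way. Consider the set $U$ of points $x \in \mathbb{R}$ having an open neighbourhood $V$ with $|A \cap V| \leq \aleph_0$. Then $A \cap U$ is countable, by Lindelöf.

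Now suppose $A \setminus U$ is nonempty. Its closure $C$ is a closed set such that $A$ meets every open set $V$ with $V \cap C \neq \emptyset$ in $\aleph_1$ points (note $|A| \leq |X| = \aleph_1$). If some open interval $I$ satisfies $I \subseteq C$, then $A \cap I$ is $\aleph_1$-dense in $I \cong \mathbb{R}$, and by $\BA$ we get $A \cap I \cong X$, contradicting $X \not\leq A$. So $C$ is nowhere dense. Collapsing the countably many complementary intervals of $C$, $A \setminus U$ maps order-preservingly and injectively (up to countably many endpoint collisions) into a set which looks like $\eta$-many points. Consequently $A = \sum_{q \in \mathbb{Q}} A_q$ with each $A_q$ countable, hence $A \leq \eta \cdot \eta$-type orders, so $A \leq \mathbb{Q} \leq \eta\varphi^0$. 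To make this rigorous, I would instead note that a nowhere dense closed $C$ is homeomorphic into the Cantor set and order-embeds into $2^\omega$ lexicographically. Then $A\setminus U$ embeds into a subset of the Cantor set which is $\aleph_1$-dense relative to $C$. The key issue is that such a set may still be uncountable, so the conclusion must be that $A \leq \mathbb{Q}$ fails in general.

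The base case is therefore the main obstacle. In fact, for $n=0$ the claim $\psi < \varphi \Rightarrow \psi \leq \eta$ says every subtype not embedding $X$ is countable. Under $\BA$, any uncountable $A \subseteq X$ contains an $\aleph_1$-dense-in-an-interval part after removing a countable set, provided we pass to the version where we identify $A$ with its image under a monotone map onto an interval. Concretely, the map $x \mapsto$ (Lebesgue-type measure of $C \cap (-\infty, x]$ replaced by a Cantor function collapsing the gaps of $C$) sends $A \setminus U$, minus countably many points, onto an $\aleph_1$-dense subset of an interval. $\BA$ then gives that this image is isomorphic to $X$, so $X \leq A$. Hence if $X \not\leq A$, then $A$ is countable and $A \leq \eta$. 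I would first verify that the Cantor collapse is injective off a countable set and preserves $\aleph_1$-density.

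\textbf{Inductive step.} Let $A \subseteq X^{n+1}$ with $X^{n+1} \not\leq A$. Consider $B = \{x \in X : A_x := \{y : (x,y) \in A\} \text{ satisfies } X^n \leq A_x\}$. If $X \leq B$, then choosing an embedding of $X^n$ into each $A_x$ for $x$ in a copy of $X$ inside $B$ gives $X^{n+1} \leq A$, a contradiction. So $X \not\leq B$, and by the base case $B$ is countable. For $x \notin B$, the inductive hypothesis gives $A_x \leq \eta\varphi^{n-1}$, i.e.\ $A_x \leq X^{n-1}\times\mathbb{Q} \leq X^n$, since $\mathbb{Q} \leq X$. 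For $x \in B$, we have $A_x \leq X^n$ trivially. Hence $A \leq \sum_{x \in \pi_0[A]} X^n$ where $\pi_0[A] \subseteq X$. This bound still has $\varphi^{n+1}$ type, so the argument must be refined using the countability of $B$.

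The refinement is as follows. The set $\{x \notin B\}$ contributes at most $X \times X^{n-1} \times \mathbb{Q} \cong X^n \times \mathbb{Q}$ after reordering coordinates, which is false as lexicographic orders. Instead we use $X \times X^{n-1}\times\mathbb{Q} \leq X \times X^{n-1} \times X = X^{n+1}$, which does not help directly. So I expect the real step is to show that $\pi_0[A]$ itself must be countable whenever $X^{n+1} \not\leq A$. This would follow if ``$X^n \leq A_x$ for uncountably many $x$'' were forced, which fails in general. Thus the inductive step likely needs a different grouping, following Proposition~\ref{prop:BAhag}: pick $A' \subseteq A$ with one representative per first coordinate, apply the base case to $\pi_0[A]$, and induct on the fibres. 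This yields $A \leq \mathbb{Q}\times X^n$ when $\pi_0[A]$ is countable. The remaining hard case, where $\pi_0[A]$ is uncountable so that $X \leq \pi_0[A]$, I would attack by a $\BA$-homogeneity argument: thin $\pi_0[A]$ to an $\aleph_1$-dense copy of $X$ on which the fibres are uniformly large.
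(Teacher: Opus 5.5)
\textbf{There is a genuine gap: your inductive step is never completed.} The point where you abandon it is exactly where the argument goes through.

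You correctly show that $B=\{x : X^n\le A_x\}$ is countable, and that $A_x\le X^{n-1}\times\mathbb{Q}$ for $x\notin B$. You then claim that $\sum_{x\notin B}A_x\le X\times X^{n-1}\times\mathbb{Q}$ holds only ``after reordering coordinates''. That misreads the lexicographic product. A sum over $x\in X$, taken in the order of $X$, of copies of $X^{n-1}\times\mathbb{Q}$ \emph{is} $X\times X^{n-1}\times\mathbb{Q}=X^n\times\mathbb{Q}$ with the first coordinate most significant. That is exactly the target type $\eta\varphi^n$.

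What is actually missing is a way to absorb the countably many large fibres over $B$. The paper does this by blowing up those points in the first coordinate. Put $L_x=X$ for $x\in B$ and $L_x=1$ for $x\in\pi_0[A]\setminus B$. Then $A_x\le L_x\times X^{n-1}\times\mathbb{Q}$ for every $x$; for $x\in B$ use $A_x\le X^n=X\times X^{n-1}$. Hence
\[
A\cong\sum_{x\in\pi_0[A]}A_x\le\Bigl(\sum_{x\in\pi_0[A]}L_x\Bigr)\times X^{n-1}\times\mathbb{Q}.
\]
Since $B$ is countable, $\sum_x L_x$ is a separable linear order of size at most $\aleph_1$, so it embeds into $\mathbb{R}$. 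It then embeds into $X$ by $\BA$: the union of a set of reals of size $\aleph_1$ with $X$ is $\aleph_1$-dense, hence isomorphic to $X$. This gives $A\le X^n\times\mathbb{Q}$.

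Your fallback for the case where $\pi_0[A]$ is uncountable cannot work. You propose to thin $\pi_0[A]$ to a copy of $X$ over which the fibres are uniformly large. But the large fibres are precisely those over the countable set $B$, so no such copy exists. A homogeneity argument aimed at $X^{n+1}\le A$ is chasing the wrong conclusion; in this case one must bound $A$ from above. As you note yourself, $\pi_0[A]$ really can be uncountable here: $A=X\times\{\bar c\}$ for a fixed $\bar c\in X^n$ has type $\varphi<\varphi^{n+1}$.

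Your base case does eventually reach the right fact, once you delete the contradictory intermediate remarks. For instance, the claim that $A\le\mathbb{Q}$ ``fails in general'' is false under $\BA$. The fact you need is that an uncountable set of reals contains an order-copy of an $\aleph_1$-dense set, which $\BA$ makes isomorphic to $X$. The paper simply cites this.
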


\begin{proof}
    By induction on $n$. This is true for $n = 0$, since any set of reals is either countable, so embeds in the rationals, or contains a copy of an $\aleph_1$-dense set (see e.g. \cite[Corollary 5.40]{ErvinMarconeWeinert}). So assume $n \geq 1$ now. 
    
    Let $A \subseteq B^{n+1}$, where $B$ is an $\aleph_1$-dense subset of $\mathbb{R}$. Then let $\pi[A] \subseteq B$ be the projection of $A$ onto the first coordinate. For any $a \in \pi[A]$, let $A_a := \{ \bar x \in B^n : a^\frown \bar x \in A \}$. Define $C := \{a \in \pi[A] : B^n \leq A_a  \}.$

    \underline{Case 1}: $C$ is uncountable. Then $C$ contains a copy of an $\aleph_1$-dense set of reals, thus a copy of $B$. It is easy to see that $B^{n+1} \leq A$.
    
    \underline{Case 2}: $C$ is countable. For each $a \in \pi[A]$, let $L_a := B$ if $a \in C$ and $L_a := \{ 0\}$ otherwise. We note that $\sum_{a \in \pi[A]} L_a \leq B$, since this is a separable linear order, thus embeds in $\mathbb{R}$, and has size at most $\aleph_1$.\footnote{An order $L$ is \emph{separable} if there is a countable $Q \subseteq L$, such that for any $x < y$, $[x,y] \cap Q \neq \emptyset$. $Q$ embeds in the rationals and each Dedekind cut in $Q$ is filled by at most one element of $L$. Thus, $L$ embeds in the reals.} 
    
    For $a \in \pi[A] \setminus C$, since $A_a < B^n$, we have that $A_a \leq B^{n-1} \times \mathbb{Q} \cong L_a \times B^{n-1} \times \mathbb{Q}$ by induction. For $a \in C$, $A_a \leq B^n = L_a \times B^{n-1} \leq L_a \times B^{n-1} \times \mathbb{Q}$. Thus \begin{align*}
        A  \cong \sum_{a \in \pi[A]} A_a &\leq \sum_{a \in \pi[A]} (L_a \times B^{n-1} \times \mathbb{Q})\\  &\cong(\sum_{a \in \pi[A]} L_a) \times B^{n-1} \times \mathbb{Q}\\ &\leq B \times B^{n-1} \times \mathbb{Q} \cong B^n \times \mathbb{Q}.
    \end{align*}\end{proof}

This shows that under Baumgartner's Axiom, the order type $\varphi$ of an $\aleph_1$-dense set of reals satisfies the conditions of Proposition~\ref{prop:BAhag}. \eqref{eq:second} is fulfilled because of the calculation above and Proposition \ref{prop:prod-ineq} shows that \eqref{eq:first} is.

This proves Theorem~\ref{thm:BA=>Hagendorf}. We also note that it is consistent that there is no Hagendorf suborder of $(\mathbb{R}, <)$ of size $\aleph_1$.


\begin{thm}\label{thm:BA=>noaleph1}
    Assuming $\BA$, there is no strictly indecomposable suborder of $\mathbb{R}$ of size $\aleph_1$.
\end{thm}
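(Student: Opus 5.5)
Let $L \subseteq \mathbb{R}$ with $|L| = \aleph_1$, and suppose towards a contradiction that $L$ is strictly indecomposable. The plan is to show that under $\BA$, $L$ is equimorphic to an $\aleph_1$-dense set of reals, and that such a set is equimorphic to each of its nonempty open intervals. This contradicts the strictness of the indecomposability.

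First, $L$ must be uncountable, since every countable order embeds into $\mathbb{Q}$. We therefore use the fact quoted in the proof of the preceding proposition: every uncountable set of reals contains a copy of an $\aleph_1$-dense set $B$. So $B \leq L$. Conversely, $L$ is a set of reals of size $\aleph_1$. Hence $L \cup B'$ is still a set of reals of size $\aleph_1$, where $B'$ is a countable dense set of reals. Fix such an $L \cup B'$ and thicken it: add an $\aleph_1$-dense set to get $D \supseteq L$ with $D$ $\aleph_1$-dense. By $\BA$ we have $D \cong B$. Thus $L \leq B \leq L$, so $L$ is equimorphic to $B$. Since the property is preserved under equimorphy, as noted after the definition, $B$ is strictly indecomposable to the right; the analogous argument gives the left-hand version.

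Now split $B = (B \cap (-\infty, r)) + (B \cap (r,\infty))$ for a real $r \notin B$. Both pieces are nonempty and are $\aleph_1$-dense subsets of open intervals of $\mathbb{R}$. Each open interval is order-isomorphic to $\mathbb{R}$, so each piece is isomorphic to an $\aleph_1$-dense subset of $\mathbb{R}$. By $\BA$, each piece is therefore isomorphic to $B$. In particular $B \leq B \cap (-\infty, r)$, a proper initial segment. This contradicts strict indecomposability to the right, which requires the initial segment $\varphi$ to satisfy $\varphi < \tau$.

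The main obstacle is fairly mild. It lies in the statement's reading of ``strictly indecomposable''; I interpret it as strictly indecomposable to the right, or to the left. Both cases are handled symmetrically, since $B^* \cong B$ via $x \mapsto -x$ combined with $\BA$. The other point needing care is the existence of the $\aleph_1$-dense superset $D$. To get it, take the union of $L$ with $\aleph_1$ many reals in each rational interval; this union has size $\aleph_1$ and is $\aleph_1$-dense.
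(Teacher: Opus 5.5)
Your proof is correct and rests on the same fact as the paper's: under $\BA$, every set of reals of size $\aleph_1$ embeds into every uncountable set of reals (your step $L \leq B \leq L$ is exactly this). The paper applies this fact directly to an uncountable proper initial (resp.\ final) segment of $L$; you instead pass to the equimorphic $\aleph_1$-dense $B$ and use that strict indecomposability is invariant under equimorphy, which is slightly longer but equally valid. Your choice of $r \notin B$ is harmless, since $\BA$ refutes $\CH$, and you could avoid it anyway by splitting at an arbitrary $r$ into $B \cap (-\infty,r)$ and $B \cap [r,\infty)$.
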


\begin{proof}
    Under $\BA$, any uncountable set of reals contains a copy of any set of reals of size $\aleph_1$. Any uncountable set of reals contains an uncountable proper initial segment and an uncountable proper final segment (see for example the argument above).
\end{proof}

\subsection{Minimal Countryman lines}
After discussing the previous result with Garett Ervin, they pointed out the following other consistent example, which we include with their permission. 

For the precise definition of a \emph{Countryman line} we refer to \cite{Moore2006}. The only properties of a Countryman line $C$ that we use are that $C$ is uncountable, $C \not\leq C^*$, $C^* \not\leq C$, $\omega_1 \not\leq C$, and $\omega_1^* \not\leq C$.


\begin{thm}\label{thm:Countryman}
    Assume that there is a Countryman line $(C, <)$ which is minimal for uncountable linear orders, i.e. any uncountable suborder of $C$ embeds $C$. Then there is a Hagendorf order of size $\aleph_1$.
\end{thm}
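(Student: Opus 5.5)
The plan is to mimic the $\BA$ construction. We take $\varphi$ to be the type of $C' := \mathbb{Q} \times C$ (that is, $C\eta$, the lexicographic product where each point of $\mathbb{Q}$ is replaced by a copy of $C$) and show that $\varphi$ satisfies the hypotheses of Proposition~\ref{prop:BAhag}. Then $\sum_{n<\omega}\varphi^n$ is Hagendorf, and it has size $\aleph_1$ because $|C| = \aleph_1$ under minimality: an uncountable suborder of size $\aleph_1$ embeds $C$, and $C$ has no copy of $\omega_1$ or $\omega_1^*$. Clearly $\eta \le \varphi$. A cleaner option might be a dense variant, for instance $\mathbb{Q}\times C$ itself, since products with $\mathbb{Q}$ absorb: $\mathbb{Q}\times\mathbb{Q}\cong\mathbb{Q}$. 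I will write $X$ for a fixed order of type $\varphi$ and arrange that $X \cong \mathbb{Q}\times X$, which will be used repeatedly.

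\textbf{Step 1: condition (2) by induction on $n$.} For $n=0$, let $A \subseteq X = \mathbb{Q}\times C$ with $X \not\le A$. If uncountably many fibres $A_q$ are nonempty, this is impossible since $\mathbb{Q}$ is countable. So some fibre $A_q \subseteq C$ is uncountable, and by minimality $C \le A_q$. Letting $D=\{q : A_q \text{ uncountable}\}$, each such fibre embeds $C$, and we get $D\times C \le A$. We want the dichotomy ``either $X\le A$ or $A\le \eta$'', but the fibres are messy. So the approach is instead to replace the base with $\mathbb{Q}$-stable versions: an uncountable suborder of $C$ embeds $C$, and countable suborders embed in $\eta$. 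Hence every $A\subseteq X$ embeds into $\mathbb{Q}\times C$ anyway, so the real task is the strict inequality case. This shows the choice of $\varphi$ matters. Taking $\varphi = C$ directly: if $\psi<C$ then $\psi$ is countable (by minimality), so $\psi \le \eta$. That is exactly (2) for $n=0$, so I now take $\varphi = C$, with $C\ge\eta$ since $C$ is uncountable and not $\omega_1,\omega_1^*$-embedding (any such order contains $\eta$ by a standard argument). For $n\ge1$ I copy the $\BA$ proof verbatim. Given $A\subseteq C^{n+1}$, let $C_A=\{a : C^n\le A_a\}$. If $C_A$ is uncountable, then $C\le C_A$ and $C^{n+1}\le A$. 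Otherwise, $\sum_a L_a$ (with $L_a=C$ for $a\in C_A$ and $L_a=1$ otherwise) is a suborder of $\sum_{a\in \pi[A]} C$ with countably many nontrivial summands. The point needing care is showing this embeds in $C$; here I would use that $C\times\mathbb{Q}$-type absorption holds via minimality, namely that the sum is an order of size $\aleph_1$ embedding into $\pi[A]\times C$. I would expect to need the observation that $C\cong$ an order into which $\mathbb{Q}\times C$ embeds, obtained by splitting $C$ into countably many disjoint uncountable intervals, each embedding $C$.

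\textbf{Step 2: condition (1)}, $C^n<C^{n+1}$, follows the pattern of Proposition~\ref{prop:prod-ineq}. In Case 1 we use induction. In Case 2 we obtain $C\le$ (countable)$\cdot C^{n}$-type pieces, and we use that $C\times C\not\le C$ because $C\times C$ contains $C^*$-like or uncountably many disjoint intervals. The replacement for ``no uncountable family of disjoint intervals in $\mathbb{R}$'' is the use of $C\not\le C^*$ together with the Aronszajn property.

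\textbf{Main obstacle.} The hardest step will be the absorption in Step 1: showing $\sum_{a\in\pi[A]}L_a \le C$, and more generally $\mathbb{Q}\times C\le C$. The first thing to try is to show that every uncountable convex piece of $C$ embeds $C$, and that $C$ partitions into $\eta$-many uncountable convex pieces.
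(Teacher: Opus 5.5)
\textbf{There is a fatal gap at Step 2.} With $\varphi=C$ (and equally with $\varphi=\eta C$), condition (1) of Proposition~\ref{prop:BAhag} already fails at $n=1$, because a minimal Countryman line satisfies $C\cdot C\le C$. Neither half of your justification holds.
\begin{enumerate}
\item $C\times C$ contains no uncountable subtype of $C^*$. Project a copy to the first coordinate. Either uncountably many first coordinates occur, or some fibre is uncountable. In both cases an uncountable subtype of $C^*$ embeds into $C$, which contradicts Galvin's observation that $C$ and $C^*$ share only countable subtypes.
\item Uncountably many disjoint intervals are no obstruction, because $C$ itself has uncountably many pairwise disjoint \emph{uncountable} convex pieces $J_\xi$.
\end{enumerate}
For the second point, identify points of $C$ at countable distance. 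The classes are countable, since $\omega_1,\omega_1^*\not\le C$. The quotient is a dense, uncountable, non-separable order that embeds into $C$, so it is again Countryman. A Countryman line is never Suslin. To see this, take a tree of intervals in which each node $I$ is split by points $p<q$, and colour the middle child $J=(p,q)$ by a chain of the countable chain decomposition of the square containing some $(a,b)$ with $a\in I$ to the left of $J$ and $b\in I$ to the right of $J$. Comparable middle children then receive coordinatewise incomparable pairs. So ccc would make the uncountably many middle children a countable union of countable antichains, which is impossible. Non-ccc of the quotient thus gives the pieces $J_\xi$.

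By minimality each $J_\xi$ embeds $C$. The order of the $J_\xi$, realised by choosing representatives, is an uncountable suborder of $C$ and so also embeds $C$; hence $C\cdot C\le C$. Consequently $C^n\equiv C$ for all $n\ge 1$, and $\sum_{n<\omega}C^n\equiv\omega\times C\equiv C$. This is not Hagendorf: write $C=C_0+C'$ with $C_0$ uncountable, as in the paper, and then $C\le C_0$ embeds $C$ into a proper initial segment. The absorption you flagged as your main obstacle is real, but it destroys Step 2 rather than helping Step 1. The analogy with Proposition~\ref{prop:prod-ineq} breaks exactly where that proof uses separability of $\mathbb{R}$.

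\textbf{What is missing} is a source of strict growth that minimality cannot absorb. The paper gets it from the incomparability of $C$ and $C^*$ rather than from products. It takes $X=\sum_{n<\omega}(C+C^*)$. This embeds into every final segment, while a copy inside a proper initial segment would force $C\le C^*$ or $C^*\le C$. For $A\subseteq X$ there are two cases.
\begin{enumerate}
\item If $A$ is uncountable on infinitely many copies of $C$ and of $C^*$, then $X\le A$ by minimality.
\item Otherwise $A$ is countable on almost all copies of, say, $C^*$. Since countable orders embed into $C$, we get $A\le\sum_{k\le n}(C+C^*)+\omega\times C$, and the only absorption needed is $\omega\times C\le C$.
\end{enumerate}
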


The assumption of the theorem holds under $\MA_{\aleph_1}$ (see \cite[Theorem 2.1.12]{Todorcevic}) and under $\diamondsuit$ (see \cite{Cummings}).

\begin{proof}
    We note that $X := \sum_{n \in \omega} (C + C^*) = C + C^* + C + C^* + \dots$ is Hagendorf. 
    
    Clearly $X$ embeds into every one of its (non-empty) final segments. If $X$ were to embed into a proper initial segment, we would obtain that $C \leq C^*$ or $C^* \leq C$ which is impossible. Thus $X$ is strictly indecomposable to the right. 
    
    Next, let $A \subseteq X$ be arbitrary and consider $A$'s intersection with the copies of $C$ and $C^*$. If $A$ is uncountable on infinitely many copies of $C$ and on infinitely many copies of $C^*$, then clearly $X$ embeds into $A$ again, by the minimality of $C$. On the other hand, suppose that $A$ is countable on all but finitely many copies of, say, $C^*$ (the case of $C$ is analogous). $C$ embeds every countable linear order, as $C$ is non-scattered (otherwise, either $\omega_1\leq C$, or $\omega_1^* \leq C$, cf. \cite[Theorem 5.28]{Rosenstein1982}). Thus, $A$ embeds into $\sum_{k \leq n} (C+C^*) + \omega \times C$, for some $n \in \omega$. To conclude that $A$ embeds into a proper initial segment of $X$ it suffices to observe the following:

    \begin{claim}
        $\omega \times C$ embeds into $C$.
    \end{claim}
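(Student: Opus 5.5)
Since $C$ is minimal for uncountable orders, it suffices to find an uncountable suborder of $C$ that embeds $\omega \times C$. Equivalently, I will build $\omega \times C$ directly inside $C$ from countably many pairwise disjoint, increasing convex pieces of $C$, each of them uncountable.

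The plan is to find a strictly increasing sequence of points $c_0 < c_1 < c_2 < \dots$ in $C$ such that each open interval $(c_n, c_{n+1})$ is uncountable. By minimality, each such interval then contains a copy of $C$. Concatenating these copies gives an embedding of $\sum_{n<\omega} C = \omega \times C$ into $\bigcup_n (c_n,c_{n+1}) \subseteq C$. Note that the target needs no endpoint condition; we only need $\omega$ many consecutive uncountable intervals.

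To get the sequence, I will use the following claim: every uncountable suborder $D \subseteq C$ contains a point $d$ such that both $\{x \in D : x < d\}$ and $\{x\in D: x > d\}$ are uncountable. To see this, let $I$ be the set of $d \in D$ whose initial segment in $D$ is countable. Then $I$ is an initial segment of $D$. If $I$ were uncountable, it would have cofinality $\omega_1$, since each of its proper initial segments is countable and it has no countable cofinal subset. That would give $\omega_1 \leq C$, which is impossible. Hence $I$ is countable. Symmetrically, the set $F$ of points with countable final segment is countable, using $\omega_1^* \not\leq C$. Since $D$ is uncountable, we can pick $d \in D \setminus (I \cup F)$, and this $d$ works.

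Now I recurse. Pick $c_0$ by applying the claim to $D=C$, so that $(c_0,\infty)$ is uncountable. Given $c_n$ with $(c_n,\infty)$ uncountable, apply the claim to $D=(c_n,\infty)$ to obtain $c_{n+1} > c_n$ such that both $(c_n,c_{n+1})$ and $(c_{n+1},\infty)$ are uncountable. This maintains the invariant, so the recursion runs through all of $\omega$.

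The only real content is the claim, and its key step is ruling out an uncountable $I$ via $\omega_1 \not\leq C$. I will spell this out by choosing increasing $\omega_1$-sequences inside $I$. Because every initial segment of $I$ is countable, a recursion of length $\omega_1$ picking elements above all previously chosen ones never gets stuck. Everything else is routine.
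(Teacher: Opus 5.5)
Your proof is correct and is essentially the paper's argument: you use the same countable initial and final segments (countable because $\omega_1 \not\leq C$ and $\omega_1^* \not\leq C$) to find a point splitting an uncountable suborder into two uncountable parts, iterate this $\omega$ times to get consecutive uncountable pieces, and then use minimality to place a copy of $C$ in each piece. The only blemish is your opening sentence, where minimality plays no role, since any suborder of $C$ embedding $\omega \times C$ already suffices.
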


    \begin{proof}
        Consider $I = \{ x \in C : \vert (-\infty, x] \vert  \leq \aleph_0 \}$ and $J = \{ x \in C :  \vert [x, \infty)\vert \leq \aleph_0 \}$. Note that $I$ and $J$ are disjoint initial and final segments of $C$, respectively. $I$ is countable, otherwise it embeds $\omega_1$. Similarly, $J$ is countable. Thus there is $x \in C \setminus (I \cup J)$ and both $(-\infty, x]$ and $[x, \infty)$ are uncountable. It follows that $C$ can be written as $C_0 + C'$, where both $C_0$ and $C'$ are uncountable. In the same vein, we can then write $C'$ as $C_1 + C''$, with $C_1$ and $C''$ uncountable. Continuing like this, we find that $\sum_{n \in\omega} C_n \leq C$, where each $C_n$ is an uncountable suborder of $C$. By the minimality assumption on $C$, $\omega \times C \leq \sum_{n \in\omega} C_n \leq C$.
    \end{proof}\end{proof}

In \cite{Cummings}, the authors show that a combinatorial principle called ``\emph{diamond in the square}" at an infinite cardinal $\kappa$ implies the existence of a \emph{$\kappa^+$-Countryman line $C$} that is minimal with respect to non-$\sigma$-scattered orders. The properties relevant to us are that $C$ and $C^*$ are incomparable and that for every suborder $X \subseteq C$, either $C \leq X$, which happens exactly when $\vert X \vert = \vert C \vert = \kappa^+$, or $X \leq C^*$, when $\vert X \vert \leq \kappa$ and in which case $X$ is $\sigma$-scattered (see \cite[Proposition 4.2]{Cummings} and note that a $\kappa^+$-Aronzsajn line embeds every $\sigma$-scattered order of size $\leq \kappa$). Essentially the same argument as that of Theorem~\ref{thm:Countryman} shows that $C + C^* + C + \dots$ is Hagendorf. Now it is noted that the failure of diamond in the square at every $\kappa$ implies an inner model with a measurable $\mu$ of Mitchell order $\mu^{++}$ (see \cite[Corollary 1.15]{Cummings}). Thus, the non-existence of Hagendorf orders has large cardinal strength. 

\begin{cor}\label{cor:largecard}
    Assume that there is no Hagendorf order. Then there is an inner model with a measurable cardinal $\mu$ of Mitchell order $\mu^{++}$.
\end{cor}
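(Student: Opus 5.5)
The plan is to argue by contraposition, combining the construction sketched in the preceding paragraph with the consistency-strength result of Cummings, Eisworth and Moore. Suppose there is no inner model with a measurable cardinal $\mu$ of Mitchell order $\mu^{++}$. By \cite[Corollary 1.15]{Cummings}, diamond in the square then holds at some infinite cardinal $\kappa$. By the main construction of \cite{Cummings}, there is a $\kappa^+$-Countryman line $C$ with the following properties.
\begin{enumerate}
\item $C \not\leq C^*$ and $C^* \not\leq C$.
\item For every $X \subseteq C$, either $|X| = \kappa^+$ and $C \leq X$, or $|X| \leq \kappa$ and $X \leq C^*$ (see \cite[Proposition 4.2]{Cummings}).
\end{enumerate}
The same holds with the roles of $C$ and $C^*$ exchanged, by reversing the order. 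It then suffices to show that $X := \sum_{n<\omega}(C + C^*)$ is Hagendorf, which contradicts the hypothesis that no Hagendorf order exists.

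I will verify the three conditions by following the proof of Theorem~\ref{thm:Countryman} line by line. \emph{Not an ordinal:} $C^* \leq X$, and $C^*$ contains no copy of $\omega_1^*$ but is uncountable and non-scattered, so $X$ is not well-ordered. \emph{Strict indecomposability to the right:} every nonempty final segment of $X$ contains a tail $\sum_{n\geq m}(C+C^*) \cong X$. If $X$ embedded into a proper initial segment, it would embed into $\sum_{k\leq n}(C+C^*)$ for some $n$. A pigeonhole argument then gives an uncountable, indeed size-$\kappa^+$, piece of some copy of $C^*$ mapping into a single copy of $C$, or vice versa. By property (2) this yields $C^* \leq C$ or $C \leq C^*$, contradicting property (1).

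\emph{Condition (2) of the definition:} let $A \subseteq X$. If $A$ meets infinitely many copies of $C$ and infinitely many copies of $C^*$ in sets of size $\kappa^+$, then minimality gives $X \leq A$. Otherwise, say, all but finitely many copies of $C^*$ meet $A$ in a set of size $\leq\kappa$. Those intersections embed into $C$ by property (2) applied to $C^*$. So $A \leq \sum_{k\leq n}(C+C^*) + \omega\times C$. To finish this case I need the analogue of the Claim, namely $\omega \times C \leq C$. I would prove it as before: write $C = C_0 + C_1 + \dots + C''$ with each $C_i$ of size $\kappa^+$. This requires that $\{x : |(-\infty,x]|\leq\kappa\}$ has size $\leq\kappa$, since otherwise it would contain a copy of $\kappa^+$, and a $\kappa^+$-Aronszajn line does not. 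Minimality then gives $C \leq C_i$.

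The main obstacle I expect is the strict-indecomposability pigeonhole step. I must make sure that an embedding of a copy of $C^*$ (of size $\kappa^+$) into finitely many copies of $C$ and $C^*$ places $\kappa^+$ many points into one copy. That holds by regularity of $\kappa^+$. I must also make sure that property (2) then indeed forces $C \leq C^*$ or $C^* \leq C$.
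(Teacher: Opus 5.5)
Your proposal is correct and follows the paper's own route. You contrapose \cite[Corollary 1.15]{Cummings} to get diamond in the square at some $\kappa$, take the resulting $\kappa^+$-Countryman line $C$ with the incomparability and size dichotomy from \cite[Proposition 4.2]{Cummings}, and rerun the proof of Theorem~\ref{thm:Countryman} for $\sum_{n<\omega}(C+C^*)$, with ``uncountable'' replaced by ``of size $\kappa^+$''.

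One harmless slip is in the ``not an ordinal'' step. The remark that $C^*$ omits $\omega_1^*$ is not needed, and for $\kappa\geq\omega_1$ it conflicts with the paper's note that $\kappa^+$-Aronszajn lines embed every $\sigma$-scattered type of size $\leq\kappa$. It suffices that $C$ has size $\kappa^+$ but does not embed $\kappa^+$, so it is not well-ordered.
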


By \cite{Moore2007}, it is consistent that $\omega_1$ and $\omega_1^*$ are the only minimal uncountable linear orders. Later, it was shown that relative to a supercompact cardinal, it is consistent that there are no minimal non-$\sigma$-scattered orders (see \cite{LameiRamandi2018}). Thus the arguments above all seem to be insufficient to provide $\ZFC$ examples for Hagendorf orders. Minimality is of course somewhat inherent in Hagendorf orders, as they are minimal above an increasing sequence of subtypes (their proper initial segments).

\subsection{A consistent real Hagendorf type}
None of the examples we have given so far are real order types. We can use a result of Abraham, Rubin and Shelah as a black-box to show that consistently real Hagendorf orders of size $\aleph_1$ do exist.

\begin{thm}\label{ARS=>Hagendorf}
    It is consistent with $2^{\aleph_0} = \aleph_2$ that there is a real Hagendorf order of size $\aleph_1$.
\end{thm}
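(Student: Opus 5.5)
The plan is to imitate the proof of \zcref[S]{thm:Countryman}, with a set of reals $A$ in place of the Countryman line $C$, and to take
\[ X := \sum_{n<\omega}(A + A^*) = A + A^* + A + A^* + \dots \]
as the Hagendorf order. A countable sum of suborders of $\mathbb{R}$, separated from each other, is again separable of size $\aleph_1$, so by the footnote argument in the $\BA$ section $X$ is a real order type. This is why the products $\varphi^n$ of Proposition~\ref{prop:BAhag} are avoided: already $B\times B$ is not a real type.

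\textbf{Black box.} From Abraham, Rubin and Shelah I would quote the consistency, together with $2^{\aleph_0}=\aleph_2$, of an $\aleph_1$-dense set $A\subseteq\mathbb{R}$ with two properties:
\begin{enumerate}
\item[(a)] $A$ is homogeneous in the sense of $\BA$ restricted to its subsets: any two $\aleph_1$-dense subsets of $A$ are isomorphic. Since every uncountable set of reals contains an $\aleph_1$-dense subset up to isomorphism (as in the $\BA$ section), every uncountable $Y\subseteq A$ then embeds $A$.
\item[(b)] $A$ is ``increasing'' or reversal-rigid: there is no uncountable order-reversing partial map from $A$ into $A$. In particular $A\not\leq A^*$ and $A^*\not\leq A$.
\end{enumerate}
These are exactly the features of $C$ used in \zcref[S]{thm:Countryman}: uncountability, incomparability with the reverse, and minimality. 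We also have $\omega_1,\omega_1^*\not\leq A$ automatically, since $A$ is real.

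\textbf{Verification.} Strict indecomposability to the right is as before. $X$ embeds into each nonempty final segment, because each such segment contains a tail $A+A^*+\dots$. If $X$ embedded into a proper initial segment, then by counting copies some copy of $A$ would have to map uncountably into copies of $A^*$, or vice versa. By pigeonhole, one copy of $A$ would then have an uncountable subset mapping order-preservingly into a single copy of $A^*$. This gives an uncountable order-reversing map $A\to A$, contradicting (b).

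For condition (2) of the definition, take $Y\subseteq X$. If $Y$ is uncountable on infinitely many copies of $A$ and infinitely many copies of $A^*$, then by (a) we get $X\leq Y$. Otherwise $Y$ is countable on a tail of the copies of, say, $A^*$. Since $A$ is dense it embeds $\eta$, hence every countable order. Therefore $Y\leq \sum_{k\le n}(A+A^*)+\omega\times A$. It then suffices to show $\omega\times A\leq A$, and this is the Claim of \zcref[S]{thm:Countryman} verbatim. Every uncountable set of reals splits into an uncountable initial segment and an uncountable final segment, so $A\geq\sum_n A_n$ with each $A_n\subseteq A$ uncountable, and (a) gives $A_n\geq A$. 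The case of $A$ in place of $A^*$ is symmetric: there one needs $\omega\times A^*\leq A^*$, which follows from the same claim applied to $A^*$, since the splitting argument is symmetric.

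\textbf{Main obstacle.} The main difficulty is pinning down the right black-box statement. I need a single ARS model in which both (a) and (b) hold for the same $A$. My recollection is that ARS build, with $\MA$ and $2^{\aleph_0}=\aleph_2$, an $\aleph_1$-dense increasing set $A$ such that every uncountable partial monotone function on $A$ has an increasing restriction, and such that any two $\aleph_1$-dense subsets of $A$ are isomorphic. I would check that their ``$\BA$ for subsets of $A$ plus increasing'' consistency result is stated in this form. If only a weaker homogeneity is available, the fallback is to rerun the above argument using only that every uncountable $Y\subseteq A$ contains a copy of some fixed uncountable $A'\leq A$ that is minimal, and to replace $A$ by such an $A'$.
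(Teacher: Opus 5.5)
Your construction is the paper's argument with one change. The paper takes from Abraham--Rubin--Shelah a model of $2^{\aleph_0}=\aleph_2$ with two incomparable minimal sets $A,B\in[\mathbb{R}]^{\aleph_1}$. It then runs the proof of Theorem~\ref{thm:Countryman} on $A+B+A+B+\cdots$. Your verification is exactly that argument with $B:=A^*$, and it is correct given your hypotheses: the pigeonhole count against a proper initial segment, the infinitely-often/cofinitely-countable dichotomy for subsets, and $\omega\times A\le A$ via splitting plus minimality.

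The only real divergence is that you insist on $B=A^*$. That asks for a single minimal set not embeddable into its reverse, which is a stronger black box than the one the paper cites. It is also exactly the point you flag as unverified.

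Your verification never uses $B=A^*$ beyond three facts: $B$ is minimal, $A\not\le B$, and $B\not\le A$. So replace $A^*$ by an arbitrary minimal $B$ incomparable with $A$. Then your ``main obstacle'' disappears. The fallback also becomes unnecessary; as stated it presupposes a minimal subtype $A'\le A$ whose existence you have no grounds for.

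A minor point concerns reading (a) literally, as subsets of $A$ that are $\aleph_1$-dense in $\mathbb{R}$. In that reading (a) does not yield minimality. An uncountable $Y\subseteq A$ is only \emph{isomorphic} to an $\aleph_1$-dense set of reals, and that set need not be a subset of $A$. It is cleaner to take minimality itself as the hypothesis, as the paper does.
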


\begin{proof}
   According to \cite{Abraham}, there is a model of $2^{\aleph_0} = \aleph_2$ with $A, B \in [\mathbb{R}]^{\aleph_1}$ such that $A$ and $B$ are incomparable, i.e. $A \not\leq B$ and $B \not\leq A$, and minimal for uncountable linear orders. Run the same argument as in Theorem~\ref{thm:Countryman}.
\end{proof}

\section{A real Hagendorf order from \texorpdfstring{$\diamondsuit$}{diamond}}\label{sec:diamond}

The following fact is one of the main tools when dealing with real order types. It was frequently used by Sierpiński, for instance in the construction of a rigid real order type, cf. \cite{Sierpinski1932}, \cite{Kuratowski1926}. Dushnik and Miller implicitly used it to construct an \emph{exact} suborder of $\mathbb{R}$, cf. \cite{DushnikMiller1940}.\footnote{An order is exact if the only self embedding is the identity. This term appears in \cite{Ginsburg1955}.}

\begin{fact}\label{fact:borel}
    Let $X \subseteq \mathbb{R}$ and $f \colon X \to \mathbb{R}$ be order-preserving. Then there is an order-preserving Borel map $\hat f \colon \hat X \to \mathbb{R}$ extending $f$. In particular, there are continuum many order-preserving maps on the reals covering any of the $2^{\mathfrak{c}}$-many such maps.\footnote{$\mathfrak{c}$ is the cardinality of the continuum.}
\end{fact}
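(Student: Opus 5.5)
The plan is to extend $f$ first to a monotone, not necessarily strictly increasing, function on all of $\mathbb{R}$, and then cut down to a Borel set on which that function is strictly increasing and which still contains $X$. The counting statement then follows, because there are only $\mathfrak{c}$ many Borel maps with Borel domains.

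\textbf{The monotone extension.} Define $F\colon \mathbb{R}\to \mathbb{R}\cup\{\pm\infty\}$ by
\[
F(y)=\sup\{f(t) : t\in X,\ t\le y\}, \qquad \sup\emptyset=-\infty.
\]
The possible value $+\infty$ is harmless, since points where $F$ is infinite will not lie in $\hat X$ (apart from trivial edge cases that are handled separately). This $F$ is non-decreasing, and it agrees with $f$ on $X$ because $f$ is order-preserving. A monotone function is Borel: the preimage of any ray $(c,\infty)$ is a ray.

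\textbf{The domain $\hat X$.} Let $S$ be the set of $x$ with $F(y)<F(x)<F(z)$ for all $y<x<z$. By monotonicity of $F$ it suffices to quantify over rationals $y,z$, so $S$ is Borel, and $F\restriction S$ is strictly increasing.

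The expected obstacle is that $X\not\subseteq S$ in general. For example, if $X=(-1,0)\cup\{1\}$ with $f(t)=t$ on $(-1,0)$ and $f(1)=0$, then $F$ is constant on $[0,1]$, so $1\notin S$. I would handle this by showing that the exceptional points are few. Suppose $x\in X\setminus S$. Then some $y<x$ has $F(y)=F(x)$, or some $z>x$ has $F(z)=F(x)$. The second alternative is impossible: if $F(z)=F(x)$ with $z>x$, then no point of $X$ lies in $(x,z]$, since such a point $t$ would give $F(z)\ge f(t)>f(x)$. Hence $F$ would be constant on $[x,z]$ while $x$ is not a left limit of $X$... in any case this forces $(x,z)\cap X=\emptyset$. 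Similarly, the first alternative forces $(y,x)\cap X=\emptyset$, because any $t\in X\cap(y,x)$ would give $F(y)\le \sup_{s\le y} f(s) \le f(t)<f(x)$. In either case $x$ is an endpoint of a nonempty open interval disjoint from $X$. Choosing a rational inside each such interval shows there are only countably many such $x$.

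Set $\hat X=S\cup X$. It is Borel, being $S$ together with a countable set, and we put $\hat f=F\restriction \hat X$. For strictness, take $x<x'$ in $\hat X$:
\begin{itemize}
\item If $x\in S$, pick $z\in(x,x')$; then $F(x)<F(z)\le F(x')$.
\item If $x'\in S$, pick $y\in(x,x')$; then $F(x)\le F(y)<F(x')$.
\item If both lie in $X$, then $F(x)=f(x)<f(x')=F(x')$.
\end{itemize}
So $\hat f$ is an order-preserving Borel extension of $f$. (Infinite values of $F$ can only occur at the ends, and composing with an order isomorphism $[-\infty,\infty]\to[-1,1]$ removes the issue.)

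\textbf{The counting consequence.} There are $\mathfrak{c}$ many Borel sets and $\mathfrak{c}$ many Borel functions on them, since each is coded by a real. Every order-preserving $f\colon X\to\mathbb{R}$ is the restriction of one of these. So the at most $\mathfrak{c}$ many order-preserving Borel maps $\hat f\colon\hat X\to\mathbb{R}$ cover all $2^{\mathfrak{c}}$ order-preserving partial maps.

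The main care point is the bookkeeping of the countably many gap endpoints, which is exactly where strictness of the naive extension can fail.
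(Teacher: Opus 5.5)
Your proof is correct and is essentially the paper's: you use the same sup-extension $F$, and your $S$ is exactly the complement of the union $A$ of the maximal non-degenerate intervals on which $F$ is constant, so $\hat X=S\cup X$ coincides with the paper's $(\mathbb{R}\setminus A)\cup(A\cap X)$. The only difference is bookkeeping: you count the exceptional points as endpoints of gaps of $X$, while the paper counts at most one point of $X$ per constancy interval.

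One correction: drop the parenthetical about composing with an isomorphism $[-\infty,\infty]\to[-1,1]$, because that would destroy $f\subseteq\hat f$. It is also unnecessary, since $F(x)=\pm\infty$ already violates the strict inequalities defining $S$, and $F=f$ is finite on $X$. Likewise, your ``second alternative'' is not impossible, but it does force $(x,z)\cap X=\emptyset$, which is all you use.
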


\begin{proof}
Consider the extended real line $[-\infty, \infty]$ that is homeomorphic with $[0,1]$. Define $\tilde f \colon \mathbb{R} \to [-\infty, \infty]$, where $\tilde f(x) = \sup \{ f(a) : a \leq x \}$.\footnote{The supremum of the empty set is $-\infty$.} Clearly $f \subseteq \tilde f$ and we have that $x \leq y$ implies $\tilde f(x) \leq \tilde f(y)$, i.e. $\tilde f$ is non-decreasing. It is a Borel map, since the preimage of an interval is an interval. Let $\mathcal{I}$ be the collection of maximal non-singleton intervals on which $\tilde f$ is constant. Then $\mathcal{I}$ is at most countable, since a distinct rational can be found in each such interval, and $A = \bigcup \mathcal{I}$ is Borel. Since $I \cap X$ can have at most one element for each $I \in \mathcal{I}$, $A \cap X$ is countable. Let $\hat X = (\mathbb{R} \setminus A) \cup (A \cap X)$ and $\hat f = \tilde f \restriction \hat X$. It is straightforward to check that $\hat f$ maps to $\mathbb{R}$ and is injective, thus order-preserving.  
\end{proof}

We first investigated what can be said under $\CH$. Originally, we thought a negative result would be in reach, due to the following: 

\begin{thm}
    Assume $\CH$ and let $X \subseteq \mathbb{R}$ be Hagendorf. Then there is a countable set $C \subseteq X$ such that $X \setminus C < X$.
\end{thm}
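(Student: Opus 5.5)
The plan is to argue by contradiction. Suppose $X \subseteq \mathbb{R}$ is Hagendorf and that $X \leq X \setminus C$ for every countable $C \subseteq X$. The countable sets $C$ will be traces of countable elementary submodels on $X$, and CH will make the relevant family of embeddings small enough to be absorbed by such models.

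\emph{Preliminary facts about $X$.}
\begin{itemize}
\item $X$ is uncountable by Theorem~\ref{thm:galvin}, so under $\CH$ we have $|X| = \aleph_1$.
\item $X$ has no greatest element. If $X = Y + 1$, strict indecomposability to the right would force $X \leq 1$.
\item Since $X$ is a set of reals, it has countable cofinality. Fix an increasing sequence $\langle x_n : n<\omega\rangle$ cofinal in $X$ and put $X_n = X \cap (-\infty, x_n)$. By strict indecomposability each $X_n < X$. Moreover, every self-embedding of $X$ has unbounded range, since otherwise $X$ would embed into some $X_n$.
\end{itemize}

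\emph{Reducing to $\aleph_1$ Borel maps.} By Fact~\ref{fact:borel} and $\CH$, fix an enumeration $\langle f_\beta : \beta<\omega_1\rangle$ of order-preserving Borel partial maps on $\mathbb{R}$, each coded by a real, such that every order-embedding of $X$ into $\mathbb{R}$ is extended by some $f_\beta$. Take a continuous $\in$-chain $\langle M_\alpha : \alpha<\omega_1\rangle$ of countable elementary submodels of some $H(\theta)$ containing $X$ and this enumeration, and put $C_\alpha := X \cap M_\alpha$.

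\emph{The absorption step.} If $\beta < \alpha$, then $f_\beta \in M_\alpha$. By elementarity, $f_\beta$ sends every $x \in C_\alpha$ with $f_\beta(x)\in X$ to a point of $C_\alpha$. So $f_\beta \restriction X$ cannot be an embedding of $X$ into $X \setminus C_\alpha$, since $C_\alpha \neq \emptyset$. Hence, under the contrary assumption, for each $\alpha$ an embedding $e_\alpha \colon X \to X\setminus C_\alpha$ can only be extended by some $f_{\beta(\alpha)}$ with $\beta(\alpha) \geq \alpha$.

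\emph{Main obstacle and how I would attack it.} The step above only shows that late maps are needed; a single countable $C$ must beat all $\aleph_1$ maps, and Fodor's lemma does not apply because $\beta(\alpha) \geq \alpha$ is not regressive. This is where clause (2) of the Hagendorf definition must enter, and I expect it to be the main difficulty. My first attempt would be to replace the arbitrary $e_\alpha$ by one with a canonical, $M_\alpha$-definable extension. For instance:
\begin{itemize}
\item compose $e_\alpha$ with an embedding of $X$ into a final segment $[x_n,\infty)\cap X$;
\item use that every proper subtype $\psi < X$ satisfies $\psi + 1 \leq X$, to show that the set of $\beta$ for which $f_\beta$ witnesses $X \leq X \setminus C$ is closed under a reflection that pulls it into $M_\alpha$.
\end{itemize}

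Failing that, I would aim for a $\Delta$-system or pressing-down argument on the finitely many parameters of $f_{\beta(\alpha)}$ that lie outside $M_\alpha$. The idea is to find stationarily many $\alpha$ sharing the relevant behaviour, and then glue them into a single countable $C$ (say $C_\alpha$ together with one further countable set) that no $f_\beta$ can avoid. That would contradict the assumption and give $X \setminus C < X$.
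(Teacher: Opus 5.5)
\paragraph{There is a genuine gap: the proposal never reaches a contradiction.} Your preliminary facts and the absorption step are correct, but the proposal stops exactly where the proof has to happen. Everything after ``Main obstacle'' is a list of hoped-for strategies, not an argument: the ``reflection'' is never specified, and the pressing-down idea would need $\beta(\alpha)\ge\alpha$ to be regressive, which it is not. The one step you do carry out uses nothing beyond the contrary hypothesis ``$X\le X\setminus C$ for every countable $C$'' plus elementarity. That hypothesis is harmless on its own; for example, $\mathbb{R}$ satisfies it. A contradiction can only come from applying clause (2) of the definition to some specific suborder of $X$, and your plan never identifies one. Aiming instead for a single countable $C$ that defeats all $\aleph_1$ Borel maps at once is the wrong target.

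\paragraph{The missing idea: diagonalize to build a bad suborder $A$.}
\begin{itemize}
\item \emph{Setup.} Assume, as one may, that $X$ is cofinal in $\mathbb{R}$. Use $\CH$ and Fact~\ref{fact:borel} to enumerate all pairs $(f_\alpha,n_\alpha)$, $\alpha<\omega_1$, of Borel order-preserving maps and natural numbers. Recursively choose $x_\alpha\in X$ and an increasing chain of countable sets $C_\alpha\subseteq X$. At stage $\alpha$, let $C=\bigcup_{\xi<\alpha}C_\xi$.
\item \emph{Easy stages.} If $X\setminus C\not\subseteq\dom f_\alpha$ or $f_\alpha[X\setminus C]\not\subseteq X$, let $x_\alpha\in X\setminus C$ witness this and set $C_\alpha=C$.
\item \emph{Main stage.} Otherwise $f_\alpha[X\setminus C]$ is unbounded. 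A bound would place $X\setminus C$, and hence (by the contrary hypothesis) $X$, into a proper initial segment of $X$, contradicting strict indecomposability to the right. So pick $x_\alpha\in X\setminus C$ with $f_\alpha(x_\alpha)>n_\alpha$. Also pick $y\in X\setminus C$ with $f_\alpha(y)\notin\{x_\xi:\xi\le\alpha\}$, which is possible by injectivity. Let $C_\alpha=C\cup\{f_\alpha(y)\}$.
\item \emph{Verification.} Put $A=\{x_\alpha:\alpha<\omega_1\}$; it is disjoint from every $C_\alpha$. Suppose $f$ embeds $A$ into $X\cap(-\infty,n)$, or $X$ into $A$. Take $\alpha$ with $f\subseteq f_\alpha$, and with $n_\alpha=n$ in the first case. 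At stage $\alpha$ the first two options are excluded by $x_\alpha$. In the third option, the first kind of embedding fails because $f(x_\alpha)>n$. The second fails because $f(y)\in C_\alpha$, so $f(y)\notin A$.
\item \emph{Conclusion.} Hence $A<X$, yet $A$ does not embed into a proper initial segment of $X$. This violates clause (2).
\end{itemize}

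This is how the obstacle you identified is bypassed. The contrary hypothesis is invoked once for each of the $\aleph_1$ countable sets $C$ arising in the recursion, each time dealing with a single map. The contradiction comes from the constructed set $A$, not from one countable set beating every map. The elementary-submodel machinery is not needed.
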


Or more generally: 

\begin{thm}\label{thm:smallembed}
    Let $X \subseteq \mathbb{R}$ be Hagendorf. Then there is a set $C \subseteq X$ of size $< \mathfrak{c}$ such that $X \setminus C < X$.
\end{thm}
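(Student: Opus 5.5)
We would argue by contradiction, combining Fact~\ref{fact:borel} with a diagonalisation over fewer than $\mathfrak{c}$ points. Fix a Hagendorf $X \subseteq \mathbb{R}$ and suppose that for every $C \subseteq X$ with $|C| < \mathfrak{c}$ we have $X \leq X \setminus C$.

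The first step is to observe that $X$ has size $\mathfrak{c}$. If $X$ is countable, Theorem~\ref{thm:galvin} already gives a contradiction. Otherwise, if $|X| < \mathfrak{c}$, take $C = X \cap I$ for a proper initial segment $I$ of $X$. Then $X \setminus C$ is a proper final segment, which is harmless by strict indecomposability. This choice does not help, so we take instead $C = X \setminus F$ for some proper initial segment $F$ of $X$, i.e.\ $C$ is a proper final segment. Then $X \leq X\setminus C = F$ would say that $X$ embeds into a proper initial segment, contradicting strict indecomposability to the right. Hence whenever $|X|<\mathfrak{c}$ the statement holds trivially by taking $C$ to be any proper final segment. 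So the interesting case is $|X| = \mathfrak{c}$. In general, any proper final segment of size $<\mathfrak{c}$ works as $C$, so we may assume every proper final segment of $X$ has size $\mathfrak{c}$.

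The main step is to enumerate, using Fact~\ref{fact:borel}, all order-preserving Borel maps $\langle \hat f_\alpha : \alpha < \mathfrak{c}\rangle$ defined on Borel sets $\hat X_\alpha \subseteq \mathbb{R}$. Every embedding $e\colon X \to X$ is the restriction of some $\hat f_\alpha$. We build $C=\{c_\alpha:\alpha<\mathfrak{c}\}$ by recursion to kill every $\hat f_\alpha$ as an embedding of $X$ into $X\setminus C$. At stage $\alpha$, we look for a point $x \in X\cap \hat X_\alpha$ such that $\hat f_\alpha(x) \in X$ is not among the points previously protected, and we put $c_\alpha := \hat f_\alpha(x)$ into $C$. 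We also protect $x$, i.e.\ promise never to put $x$ into $C$, so that $\hat f_\alpha\restriction X$ cannot map $X$ into $X\setminus C$. Since fewer than $\mathfrak{c}$ points are protected at each stage, such an $x$ exists unless $\hat f_\alpha$ maps $X$ into $X$ at fewer than $\mathfrak{c}$ points, or maps a large set onto protected points. In those cases $\hat f_\alpha\restriction X$ is not an embedding of $X$ into $X$ anyway (using $|X|=\mathfrak c$ and injectivity), so we skip the stage. The problem is that $C$ then has size $\mathfrak{c}$, not $<\mathfrak{c}$. So we cannot diagonalise against all maps at once. Instead we must exploit the contradiction hypothesis at every stage: at stage $\alpha$ the set $C_\alpha$ built so far has size $<\mathfrak{c}$, so $X \setminus C_\alpha$ still embeds $X$. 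The aim is to show that this leads to a configuration contradicting the Hagendorf property. For instance, we could use it to find embeddings whose images avoid a growing sequence of points in a fixed proper final segment, and derive that $X$ embeds into a proper initial segment.

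The step I expect to be the real obstacle is turning ``every $X\setminus C$, $|C|<\mathfrak{c}$, embeds $X$'' into a contradiction. My first attempt will be to combine it with property (2) of Hagendorf orders as follows. Pick $x_0\in X$ and let $C$ range over small subsets of the final segment $X\cap(x_0,\infty)$. The Borel extensions $\hat f$ of the embeddings $X\to X\setminus C$ form only $\mathfrak{c}$ many maps. Each single $\hat f$ has a range meeting $X\cap (x_0,\infty)$ in some set, and choosing one point of that set per map across a length-$\mathfrak{c}$ recursion, with the hypothesis invoked at each stage, should produce $\mathfrak{c}$ distinct maps. From these we would try to extract one map sending $X$ below $x_0$, which contradicts strict indecomposability to the right. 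Making this counting argument precise is the crux.
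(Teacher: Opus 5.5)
Your proposal stops exactly where the proof has to happen. The last step, which you yourself call the crux, is left open, and the route you sketch for it has no mechanism. Every self-embedding of a Hagendorf $X$ already has cofinal range, since otherwise $X$ would embed into a proper initial segment. So producing $\mathfrak{c}$ many distinct embeddings $X \to X \setminus C_\alpha$ gives you nothing from which a map below $x_0$ could be ``extracted''. More to the point, clause (2) of the definition is never used concretely. You also abandoned your Step 2 recursion for the wrong reason. A forbidden set $C$ of size $\mathfrak{c}$ at the end is harmless. The hypothesis only has to be applied, at each stage, to the forbidden set built so far, and the contradiction should target something other than ``$X\setminus C<X$ for a small $C$''.

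The missing idea is to aim the contradiction at clause (2), applied to the set $A$ of \emph{protected} points.
\begin{enumerate}
\item Set-up: assume w.l.o.g.\ that $X$ is cofinal in $\mathbb{R}$ and $|X| = \mathfrak{c}$. Run your recursion of length $\mathfrak{c}$ over pairs $(f_\alpha, n_\alpha)$, with $f_\alpha$ a Borel order-preserving map from Fact~\ref{fact:borel} and $n_\alpha \in \mathbb{N}$.
\item Protecting: at stage $\alpha$, let $C$ be the forbidden set so far, of size $<\mathfrak{c}$. Protect some $x_\alpha \in X \setminus C$ with $x_\alpha \notin \dom f_\alpha$, or $f_\alpha(x_\alpha) \notin X$, or $f_\alpha(x_\alpha) > n_\alpha$. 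The last option is where your hypothesis enters. If $f_\alpha$ maps $X \setminus C$ into $X$, its image must be cofinal, since otherwise $X \le X \setminus C$ would embed $X$ into a proper initial segment.
\item Forbidding: in that last case, also forbid a value $f_\alpha(y)$ with $y \in X \setminus C$ that is not a protected point, including $x_\alpha$. This is possible because $|X \setminus C| = \mathfrak{c}$ and $f_\alpha$ is injective.
\end{enumerate}
Now consider $A = \{x_\alpha : \alpha < \mathfrak{c}\}$, which is disjoint from the final forbidden set.
\begin{itemize}
\item $A$ embeds into no $X \cap (-\infty, n)$. Such an embedding would be covered by some $f_\alpha$ with $n_\alpha = n$, contradicting the choice of $x_\alpha$.
\item $X \not\le A$. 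Any embedding $X \to A$ is covered by some $f_\alpha$, which at stage $\alpha$ was forced into the third case and so sent a point into the forbidden set.
\end{itemize}
Hence $A < X$ but $A + 1 \not\le X$, contradicting clause (2).
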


Of course this is trivial when $\vert X \vert < \mathfrak{c}$. But under $\CH$, where a real Hagendorf order must have size $\mathfrak{c} = \aleph_1$, the situation seems a bit absurd. Being Hagendorf imposes a dichotomy on suborders of $X$. Either a suborder is ``small'', meaning it appears in a proper initial segment, or it is as ``large'' as $X$. One would naively expect that removing only countably many element should keep the order ``large''.




\begin{proof}
    Suppose $X \subseteq \mathbb{R}$ is Hagendorf and for every countable $C \subseteq X$, $X \setminus C \not< X$, i.e. $X \leq X \setminus C$. Without loss of generality we can also assume that $X$ is cofinal in $\mathbb{R}$. This will simplify some notation. Let $\langle (f_\alpha, n_\alpha) : \alpha < \omega_1 \rangle$ enumerate all pairs $(f,n)$ of functions given by the fact above and natural numbers $n$. We will recursively construct a sequence $\langle x_\alpha : \alpha < \omega_1 \rangle$ and $\langle C_\alpha : \alpha < \omega_1 \rangle$ as follows. Each $x_\alpha$ will be an element of $X$ and $C_\alpha$ will be a countable subset of $X$. Suppose $x_\xi$, $C_\xi$ have been constructed for all $\xi < \alpha$. Let $C = \bigcup_{\xi < \alpha} C_\xi$. Given $f_\alpha$ and $n_\alpha$, we have a few options:
    \begin{enumerate}
        \item If $X \setminus C \not \subseteq \dom f_\alpha$, let $C_\alpha = C$ and $x_\alpha \in (X \setminus C) \setminus \dom f_\alpha$ be arbitrary.
        \item If $X \setminus C \subseteq \dom f_\alpha$, but $f[X\setminus C] \not\subseteq X$, let $C_\alpha = C$ and $x_\alpha \in X\setminus C$ be such that $f_\alpha(x_\alpha) \notin X$.
        \item If $X \setminus C \subseteq \dom f_\alpha$, and $f_\alpha[X\setminus C] \subseteq X$, note that $f[X\setminus C]$ must be cofinal in $\mathbb{R}$. Otherwise $f_\alpha$ witnesses that $X \setminus C$ embeds into an initial segment of $X$, and thus $X \setminus C < X$. Since every final segment of $X$, also of $X \setminus C$, is uncountable, we can find $x_\alpha \in X \setminus C$ such that $f_\alpha(x_\alpha) > n_\alpha$. Also, we can find $y \in X \setminus C$ such that and $f_\alpha(y) \notin \{ x_\xi : \xi < \alpha \}$. Let $C_\alpha = C \cup \{ f_\alpha(y)\}$. 
    \end{enumerate}

    Finally, let $A = \{x_\alpha : \alpha < \omega_1 \}$. Note that $A \cap \bigcup_{\alpha \in \omega_1} C_\alpha = \emptyset$.

    \begin{claim}
        $A$ does not embed in any initial segment of $X$.
    \end{claim}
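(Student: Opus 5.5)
We argue by contradiction. Suppose $e \colon A \to X$ is an order-embedding whose range is contained in a proper initial segment of $X$; since $X$ is cofinal in $\mathbb{R}$, we may assume $\ran e \subseteq (-\infty, n)$ for some $n \in \omega$. By Fact~\ref{fact:borel}, $e$ extends to an order-preserving Borel map $\hat e$. This map occurs in our enumeration, together with the integer $n$, as $(f_\alpha, n_\alpha) = (\hat e, n)$ for some $\alpha < \omega_1$. We look at stage $\alpha$ of the recursion and show that none of the three options is compatible with $e$. The only real content is checking that $x_\alpha$ was chosen inside $A$, so that the constraint imposed at stage $\alpha$ applies to $e$.

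\emph{Option (1).} Here $x_\alpha \in A \setminus \dom f_\alpha$. But $A \subseteq \dom e \subseteq \dom f_\alpha$, since $f_\alpha = \hat e$ extends $e$. This is a contradiction.

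\emph{Option (2).} Here $x_\alpha \in A$ and $f_\alpha(x_\alpha) \notin X$. Yet $f_\alpha(x_\alpha) = e(x_\alpha) \in X$, again a contradiction.

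\emph{Option (3).} Here $x_\alpha \in A$ was chosen with $f_\alpha(x_\alpha) > n_\alpha = n$. However, $f_\alpha(x_\alpha) = e(x_\alpha) < n$, which is a contradiction.

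Since every option fails, no such $e$ exists, and the claim follows. In particular, the extra point $f_\alpha(y)$ added to $C_\alpha$ in option (3) plays no role in this claim. It is there only for the later argument that $X \setminus \bigcup_\alpha C_\alpha$ fails to embed $X$, or, combined with Hagendorfness, that $A$ then embeds in a proper initial segment. In the present claim it suffices that each $x_\alpha$ belongs to $A$.

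The one point to verify carefully is that the enumeration really contains the pair $(\hat e, n)$. Under $\CH$ there are only $\mathfrak{c} = \aleph_1$ Borel order-preserving maps (with Borel domains) produced by Fact~\ref{fact:borel}. So $\langle (f_\alpha, n_\alpha) : \alpha < \omega_1 \rangle$ lists all of them paired with all $n$, and every embedding $e$ of a subset of $\mathbb{R}$ is covered by some listed $f_\alpha \supseteq e$. This is the main obstacle, but it is immediate from the setup. For the general version (Theorem~\ref{thm:smallembed}), one would run the same recursion of length $\mathfrak{c}$ with sets $C$ of size $<\mathfrak{c}$, and the argument is identical.
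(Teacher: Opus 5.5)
Your proof is correct and follows the paper's argument. You cover the putative embedding into $X \cap (-\infty,n)$ by an enumerated pair $(f_\alpha,n_\alpha)$ via Fact~\ref{fact:borel}, and note that since $x_\alpha \in A \subseteq \dom f_\alpha$, options (1) and (2) are impossible at stage $\alpha$, while option (3) gives $f_\alpha(x_\alpha) > n$, contradicting the bound.
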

    
    \begin{proof}
        Otherwise, there is some $f$ and $n$ so that $f \colon A \to X \cap (-\infty, n)$ is order-preserving. Then there is $\alpha$ so that $f \subseteq f_\alpha$ and $n = n_\alpha$. At step $\alpha$ of the construction, neither option (1) nor option (2) could have happened as $x_\alpha \in \dom f \subseteq \dom f_\alpha$ and $f_\alpha(x_\alpha) \in X$. Thus (3) must have been the case. But then $f(x_\alpha) = f_\alpha(x_\alpha) > n_\alpha = n$, which also contradicts our assumption on $f$. 
    \end{proof}
    \begin{claim}
        $X$ does not embed into $A$.
    \end{claim}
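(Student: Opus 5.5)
Suppose toward a contradiction that $g \colon X \to A$ is order-preserving. Since $A \subseteq X$, the map $g$ is also an order-preserving map from $X$ into $X \subseteq \mathbb{R}$. By Fact~\ref{fact:borel} it extends to one of the maps in our list: there is $\alpha$ with $g \subseteq f_\alpha$. (The second coordinate $n_\alpha$ is irrelevant here; the pairs enumeration lists each such $f$ cofinally often, but one occurrence suffices.) The plan is to show that at stage $\alpha$ the construction deliberately defeated $f_\alpha$, so that $g$ cannot take values in $A$.

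Look at stage $\alpha$, with $C = \bigcup_{\xi<\alpha} C_\xi$. First, $X \setminus C \subseteq X \subseteq \dom g \subseteq \dom f_\alpha$, so option (1) did not occur. Second, $f_\alpha[X\setminus C] = g[X \setminus C] \subseteq A \subseteq X$, so option (2) did not occur. Hence option (3) was applied, and we get some $y \in X \setminus C$ with $f_\alpha(y) \notin \{x_\xi : \xi<\alpha\}$ and $f_\alpha(y) \in C_\alpha$. Note that we need the nonemptiness of the set of such $y$. It holds because $f_\alpha \restriction (X\setminus C)$ is injective, $X \setminus C$ is uncountable, and $\{x_\xi:\xi<\alpha\}$ is countable.

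Now $g(y) = f_\alpha(y) \in C_\alpha \subseteq \bigcup_\beta C_\beta$. Since $A \cap \bigcup_{\beta<\omega_1} C_\beta = \emptyset$, we get $g(y) \notin A$, contradicting $\ran g \subseteq A$.

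The one point to verify carefully is the disjointness $A \cap \bigcup_\beta C_\beta = \emptyset$, which the construction asserts. It holds for two reasons. At each stage, $x_\beta$ is chosen outside $C = \bigcup_{\xi<\beta}C_\xi$. For a later stage $\beta' > \beta$, any point added to $C_{\beta'}$ is of the form $f_{\beta'}(y)$ with $f_{\beta'}(y) \notin \{x_\xi:\xi<\beta'\}$, so it is never an earlier $x_\beta$. Finally, $C_\beta$ itself is only added after $x_\beta$ is chosen, and its new point avoids $x_\beta$ because $\beta < \beta+1$ and the exclusion in option (3) covers $\{x_\xi : \xi < \beta\}$. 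For this last step to work, the exclusion at stage $\beta$ should include $x_\beta$ itself; if it does not, I would simply also require $f_\beta(y) \neq x_\beta$, which is possible by the same counting. This bookkeeping is the only potential obstacle, and the argument is otherwise immediate.
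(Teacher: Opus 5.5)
Your argument is correct and is the paper's own proof: you extend the embedding to some $f_\alpha$, note that options (1) and (2) are ruled out at stage $\alpha$, conclude that option (3) put some $f_\alpha(y)$ into $C_\alpha$, and use that $C_\alpha$ is disjoint from $A$. Your bookkeeping remark is also correct: option (3) as written only excludes $\{x_\xi : \xi<\alpha\}$, so the asserted disjointness $A \cap \bigcup_\beta C_\beta = \emptyset$ does need the extra requirement $f_\alpha(y) \neq x_\alpha$, and your counting argument supplies it (your parenthetical claim that each $f$ is listed cofinally often is not guaranteed by the enumeration, but you do not use it).
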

\begin{proof}
    The proof is essentially the same as above starting from $f \colon X \to A$ and any $n$, noting that for the same reasons (1) and (2) are impossible, while in (3) we have guaranteed that $f(y) \notin A$ for some $y \in X$.
\end{proof}

Thus $A$ is a counterexample to $X$ being Hagendorf, posing a contradiction.
\end{proof}

\begin{thm}\label{thm:diamond=>Hagendorf_real}
    Assuming $\diamondsuit_{\omega_1}$, there is a Hagendorf order $X \subseteq \mathbb{R}$.
\end{thm}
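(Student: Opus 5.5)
We will build $X=\{x_\alpha:\alpha<\omega_1\}\subseteq\mathbb{R}$ by a recursion of length $\omega_1$. Along the way we handle candidate embeddings using Fact~\ref{fact:borel} together with the $\diamondsuit$-sequence. By $\CH$ (which follows from $\diamondsuit$) the order-preserving Borel maps $\hat f$ of Fact~\ref{fact:borel} can be enumerated as $\langle f_\alpha:\alpha<\omega_1\rangle$. We also fix a $\diamondsuit$-sequence $\langle D_\alpha:\alpha<\omega_1\rangle$ guessing subsets of $\omega_1$, coded so that it guesses subsets $A\subseteq X$ via $A\cap X_\alpha=D_\alpha$, where $X_\alpha=\{x_\xi:\xi<\alpha\}$.

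The target shape is modeled on the products in Proposition~\ref{prop:BAhag}. I would not aim for exactly $\sum_n\varphi^n$. Instead $X$ will be a countable increasing union of blocks $X=\bigcup_n X^{(n)}$, where $X^{(n)}\subseteq(n,n+1)$ and each block is dense. The block structure is arranged so that $X^{(n)}$ embeds into $X^{(m)}$ for $n<m$, but no final segment of $X$ embeds into a bounded part. The recursion maintains three invariants, each checked at countable stages $\alpha$, with every real added chosen generically over a countable model.

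\emph{Invariant (a), rigidity.} For each $\beta<\alpha$ we ensure that $f_\beta$ does not map $X$ into a bounded initial segment. We do this by adding $x_\alpha$ so that $f_\beta(x_\alpha)\notin X$, or so that $f_\beta(x_\alpha)$ is large. Since $X$ is eventually closed off from $f_\beta$-images, a Baire category argument lets us pick $x_\alpha$ outside the countably many meagre "bad" sets. This yields strict indecomposability to the right and non-ordinality, the latter since $X$ is dense, and it mirrors the proof of Theorem~\ref{thm:smallembed}.

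\emph{Invariant (b), absorption.} Whenever $D_\alpha$ codes a set $A\cap X_\alpha$, and $X_\alpha$ is a stage at which $\alpha$ lies in the club of countable elementary submodels, we run a dichotomy.
\begin{enumerate}
\item If some $f_\beta$ with $\beta<\alpha$ already maps $X_\alpha$ into $D_\alpha$ and can be kept alive, we commit to extending it. Via Cantor's back-and-forth on dense countable sets, each future $x_\gamma$ is added in matching pairs, so that the guessed set $A$ keeps containing a copy of $X$.
\item Otherwise we commit to making $A$ small. Every future point is put into a fixed countable-type pattern so that $A$ embeds into $X\cap(-\infty,n)$ for some $n$.
\end{enumerate}
By $\diamondsuit$, every $A\subseteq X$ is guessed stationarily often. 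A Fodor-style argument then shows that the commitments made at a guessing point reflect $A$'s true behavior. So either $X\le A$, or $A+1\le X$, which is exactly clause (2) of the definition of Hagendorf.

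\emph{Invariant (c), compatibility.} The commitments of (b) at stages $\alpha$ must not contradict the diagonalization of (a). Only countably many commitments are active at any stage, each with countable information, so we need some new real meeting countably many dense requirements. This is again Baire category applied to the countably many back-and-forth extensions.

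The main obstacle is (b): making the dichotomy "either $X$ embeds into $A$ or $A$ lies in a proper initial segment" robust. The first thing I would try is the ordinal-block idea from the $\BA$ section. Let $X$ be structured so that each point $x$ carries an "$\eta X^{n}$-level", and handle guessed sets $A$ blockwise as in Case 1 and Case 2 of Proposition~\ref{prop:BAhag}. Here $\diamondsuit$ plays the role $\BA$ played, since it decides at countable stages whether a block of $A$ is "large" (containing a copy of $X$) or "small" (embedding into $\eta\cdot X^{(n)}$). The final verification then copies the computation in Proposition~\ref{prop:BAhag}, with Proposition~\ref{prop:prod-ineq} replaced by the rigidity invariant (a).
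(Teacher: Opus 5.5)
Your proposal has a genuine gap, and it sits where you expected it, in invariant (b). A second gap is hidden in (c).

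The ``large'' branch of (b) cannot work as described. $A$ is an arbitrary subset of the \emph{final} $X$, and $D_\alpha$ only reveals $A\cap X_\alpha$. Whether later points belong to $A$ is outside your control, so adding points ``in matching pairs'' guarantees nothing about $A$. At a guessing stage you can only commit to properties of $X$ itself relative to the countable guessed set, and the paper therefore treats the two sides asymmetrically. $\diamondsuit$ is used only for smallness: it guesses a countable piece $C$ of $B\cap(r,\infty)$, where $B=X\setminus A$. One then adds a continuous order-preserving $h$ into $(0,r+1)$ such that the rest of the construction keeps $X\setminus C$ closed under $h$, while $h$ sends $C$ into a fixed countable forbidden set $Y$. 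Then $A\subseteq X\setminus C$ embeds into $X\cap(0,r+1)$. Largeness is never committed to. It is derived at the end from a fixed countable family $\mathcal{G}$ of continuous progressive self-maps under which $X$ is closed; these also supply the half of strict indecomposability to the right that your invariant (a) does not address, namely embedding $X$ into every final segment. If the guessing condition fails for $B$ at every $r$, then for each $r$ some composition $\langle w_r\rangle$ of maps from $\mathcal{G}$ and the smallness maps sends $X\cap O_r$ into $A$, for some interval $O_r$ above $r$. Gluing these yields $X\le A$. By contrast, your stage test (``some $f_\beta$, $\beta<\alpha$, maps $X_\alpha$ into $D_\alpha$'') has no reason to reflect the final behaviour of $A$, and the ``Fodor-style argument'' is not supplied. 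Worse, a wrong ``make $A$ small'' commitment for a large $A$ would make $X$ itself embed into a proper initial segment, destroying (a).

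Invariant (c) is likewise not a genericity issue. Once $X$ must be closed under the committed maps, adding a new point $x$ adds its whole orbit $\{\langle w\rangle(x)\}$. Baire category then only tells you that either a suitable $x$ exists, or the bounded-range map $f$ you want to kill agrees with some composition $\langle w\rangle$ on an open interval. Compositions passing through a smallness map really do send everything into a bounded set, so no choice of new reals excludes the second alternative. What you need is a structural invariant, the paper's property (3): for every smallness triple $(h,C,r)$, every word $w$ not containing $h$, and every interval $O$ kept above $r$ by all prefixes of $w$, some $x\in X\cap O$ has $\langle w\rangle(x)\in C\cup Y$. Apply this to the prefix of $w$ before the first letter that pushes $O$ below the bound of $f$; that letter is necessarily a smallness map, since the maps in $\mathcal{G}$ are progressive. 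Since $Y$ is closed under all the maps, this yields $x\in X\cap O$ with $f(x)=\langle w\rangle(x)\in Y$, so $f$ was already killed. Whether $h$ is added at a guessing stage (Case 2a versus 2b) is decided by whether (3) can be preserved. A club argument then matches this stage-wise test with the final dichotomy for $A$. This is the heart of the proof, and your plan has no counterpart to it. Theorem~\ref{thm:smallembed} shows the issue is unavoidable: under $\CH$ you must make $X\setminus C$ small for some countable $C$ while keeping $X$ large.

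Finally, the $\BA$-style block template cannot be realised inside $\mathbb{R}$. For uncountable $\varphi$, already $\eta\varphi$ and $\varphi^2$ contain uncountably many disjoint non-degenerate intervals, as in the proof of Proposition~\ref{prop:prod-ineq}. Moreover, hypothesis (2) of Proposition~\ref{prop:BAhag} was obtained from $\BA$, which fails under $\CH$ and hence under $\diamondsuit$.
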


\begin{proof}\label{thm:diamond}
      We will construct the Hagendorf order as a subset of $R = (0, \infty) \setminus \mathbb{N}$. Fix any countably dense subset $Y$ of $R$. There will be a countable set $\mathcal{G}$ of functions $g \colon R \to R$, and we recursively construct a sequence $\langle X_\alpha, Z_\alpha, \mathcal{H}_\alpha : \alpha < \omega_1 \rangle$ that is $\subseteq$-increasing in all variables, where
    \begin{itemize}
        \item $X_\alpha \subseteq R$ is countable dense,
        \item $Z_\alpha \subseteq R$ is countable,
        \item $X_\alpha \cap (Y \cup Z_\alpha) = \emptyset$,
        \item $\mathcal{G}$ consists of continuous progressive (for all $x$, $g(x) > x$) order-preserving functions $g \colon R \to R$, with $g[Y] \subseteq Y$, i.e. $Y$ is closed under $\mathcal{G}$,
        \item $\mathcal{H}_\alpha$ consists of triples $(h, C, r)$, where $r$ is a natural number, $C$ is a countable dense subset of $X_\alpha \cap (r, \infty)$ and $h \colon R \to R \cap (0, r+1)$ is a continuous order-preserving map so that $ h(x) = x$ for all $x < r$.
    \end{itemize}

    Whenever $\mathcal{A}$ is a set of functions $R \to R$ and $w \in \mathcal{A}^{<\omega}$ is a word with letters in $\mathcal{A}$, we write $\langle w \rangle$ for the function $w(\vert w \vert -1) \circ \dots \circ w(0) \colon R \to R$. The following are satisfied at each step: 

    \begin{enumerate}
        \item For any $g \in \mathcal{G}$, $g[X_\alpha] \subseteq X_\alpha$, i.e. also $X_\alpha$ is closed under $\mathcal{G}$.
        \item For any $(h,C,r) \in \mathcal{H}_\alpha$, $h[X_\alpha \setminus C] \subseteq X_\alpha$, $h[Y] \subseteq Y$ and $h[C] \subseteq Y$.
        \item For any \begin{enumerate}
    \item $(h, C, r) \in \mathcal{H}_\alpha$,
    \item word $w \in (\mathcal{G} \cup \dom \mathcal{H}_\alpha)^{<\omega}$ not containing $h$ ($h \notin \ran(w)$),
    \item and any (non-empty) open interval $O \subseteq R \cap (r, \infty)$, so that $\langle w \restriction i\rangle[O] \subseteq (r, \infty)$, for each $i \leq \vert w \vert$,
\end{enumerate}
    there is $x \in X_\alpha \cap O$, with $\langle w \rangle(x) \in C \cup Y$. \footnote{Note that this implies that there is a dense (in $O$) set of such $x$, as we may shrink $O$ arbitrarily, retaining the property in (c). Also, note that $\langle w \rangle(x) \in Y$ happens exactly when $w$ can be written as $w_0 k w_1$ for some $(k,D,q) \in \mathcal{H}_\alpha$, where $\langle w_0 \rangle(x) \in D$. Otherwise, $\langle w\rangle(x) \in X_\alpha$, by the closure properties of (1) and (2), and $Y$ being closed under $\mathcal{G}$. In particular, for $x \in X_\alpha$, $\langle w \rangle(x) \notin Y$ if and only if $\langle w \rangle(x) \in X_\alpha$.}
    \end{enumerate}

    Before we start the construction, let us briefly explain the purpose of these objects. The Hagendorf order will be $X = \bigcup_{\alpha < \omega_1} X_\alpha$. $Y$ and the $Z_\alpha$'s consist of forbidden elements that should not be added to $X$. The functions $g \in \mathcal{G}$ will be self-embeddings of $X$. These are used to argue that certain suborders of $X$ are ``large", i.e. equimorphic with $X$. On the other hand, $(h, C,r) \in \mathcal{H}_\alpha$ witnesses that $X \setminus C$ embeds into an initial segment of $X$ (namely into $X \cap (0, r+1)$). We need to avoid that all of $X$ gets sent into that initial segment of $X$, hence $h$ maps the points in $C$ to forbidden values. This explains properties (1) and (2). Property (3) is much harder to motivate. It is another crucial element when we want to ensure that no function embeds $X$ into an initial segment and this is the most difficult part of the construction.

    For the construction, list all order preserving functions $f$ from the fact as $\langle f_{2\alpha+1}: \alpha < \omega_1 \rangle$. Next, enumerate $R$ as $\langle x_\xi : \xi < \omega_1 \rangle$. Let $\langle A_\alpha : \alpha < \omega_1 \rangle$ be a $\diamondsuit$-sequence on $\omega_1$, and for each $\alpha$, define $C_{2\alpha} = \{ x_\xi : \xi \in A_\alpha \}$ and $r_{2\alpha} = \max \{r \in \mathbb{N} : C_{2\alpha} \subseteq (r, \infty) \}$ (if $C_{2\alpha} = \emptyset$, just let $r = 0$). So, $\langle C_{2\alpha}, r_{2\alpha} : \alpha < \omega_1 \rangle$ lists pairs $(C,r)$ where $r \in \mathbb{N}$, and $C$ is a countable subset of $R \cap (r, \infty)$.  

    Start with $X_0$ any countable dense subset of $R$ disjoint from $Y$. Let $\mathcal{G}$ be countable so that for each $g \in \mathcal{G}$, $g[X_0] \subseteq X_0$, $g[Y] \subseteq Y$, and for any rationals $a <b <c<d$, $(a, b), (c, d) \subseteq R$, there is $g \in \mathcal{G}$ with $g[(a,b)] \subseteq (c,d)$. Since $X_0$ and $Y$ are dense, this can be achieved using an easy back-and-forth construction. We let $Z_0 = \mathcal{H}_0 = \emptyset$, so (2) and (3) above are satisfied vacuously. At limit steps, we simply take unions of the previous sets. The above properties are then clearly preserved. Thus, suppose that we are now given $X_\alpha$, $Z_\alpha$ and $\mathcal{H}_\alpha$. There are two cases how to proceed: 

    \underline{Case 1}: $\alpha = 1 \mod 2$. We let $\mathcal{H}_{\alpha +1} = \mathcal{H}_\alpha$, and it remains to say how we extend $X_\alpha$ and $Z_\alpha$. Property (3) above cannot be destroyed in this case. So let $f = f_\alpha$. 
    
    If the range of $f$ is cofinal in $\mathbb{R}$, or $X_\alpha$ is not contained in the domain of $f$, we may skip this case immediately, as $f$ will be of no relevance. Simply let $X_{\alpha+1} = X_\alpha$, $Z_{\alpha +1} = Z_\alpha$. If there is any $x \in X_\alpha$, with $f(x) \notin X_\alpha$, we let $Z_{\alpha +1} = Z_\alpha \cup \{ f(x) \}$, $X_{\alpha +1} = X_\alpha$.
    
    Otherwise, the range of $f$ is contained within $(-\infty, n)$, for some natural $n$. Our goal is to add some new value $x \in R$ to $X_\alpha$ which is either not in the domain of $f$, or we add $f(x)$ to $Z_\alpha$. This will ensure that $f$ will not be a self-embedding of our final order $X$. For a particular $x \in R$, the following are the only potential obstructions to this: 
        \begin{itemize}
            \item $x \in \dom f$ and $f(x) \in X_\alpha$, 
            \item there is a word  $w \in (\mathcal{G} \cup \dom \mathcal{H}_\alpha)^{<\omega}$, so that $\langle w \rangle(x) \in Y \cup Z_\alpha$ (e.g. if already $x \in Y \cup Z_\alpha$),
            \item $x \in \dom f$ and there is a word  $w \in (\mathcal{G} \cup \dom \mathcal{H}_\alpha)^{<\omega}$, so that $f(x) = \langle w \rangle(x)$.
        \end{itemize}

    If none of the above is satisfied for a particular $x \in R$, then if $X_{\alpha +1 } \supseteq X_\alpha \cup \{ x \}$ is smallest such that the closure properties (1) and (2) hold, we have that $X_{\alpha +1} \cap (Y \cup Z_\alpha) = \emptyset$ and, either $x$ is not in the domain of $f$, or $f(x) \notin X_{\alpha +1}$. Then let $Z_{\alpha +1} = Z_\alpha \cup \{ f(x)\}$.

    We will argue that indeed $x \in R$ can be found, for which none of the obstructions listed above occurs. Suppose not. There are clearly at most countably many $x$ for which one of the first two options can happen. For each word $w \in (\mathcal{G} \cup \dom \mathcal{H}_\alpha)^{<\omega}$, let $R_w = \{ x \in R : f(x) = \langle w \rangle(x) \}.$ By assumption, these sets cover a co-countable subset of $R$. Thus there must be some $w$ and some open interval $O \subseteq R \cap (n, \infty)$ so that $R_w$ is dense in $O$, since otherwise, $R\cap (n, \infty)$ is covered by countably many nowhere dense sets. This is impossible by the Baire Category Theorem. So the order-preserving function $f$ agrees with the continuous function $\langle w \rangle$ on a dense subset of $O$. It is easy to see that $f$ then already agrees with $\langle w \rangle$ on all of $O \cap \dom f$. As $O \subseteq (n, \infty)$ and $f$ maps into $(0, n)$, we can find a minimal initial subword $w' = w \restriction i$, $i \leq \vert w \vert$, of $w$, so that $\langle w' \rangle[O] \cap (0, n) \neq \emptyset$. This word $w'$ must be non-empty, as $O \cap (0, n) = \emptyset$, and its last letter must be a function $h \in  \dom \mathcal{H}_\alpha$, as all $g \in \mathcal{G}$ are progressive. Let $C, r$ be so that $(h, C, r) \in \mathcal{H}_\alpha$. Then $r < n$ must be the case as well, as otherwise $h$ is the indentity function on $(0, n)$ and maps no value above $n$ into $(0,n)$. Let $w' = w'' h$. By the minimality of $w'$, $h$ cannot appear in $w''$, as the whole range of $h$ is already contained in $(r, r+1) \subseteq (0, n)$. By property (3) above and by the minimality of $w'$, there is some $x \in X_\alpha \cap O$, with $\langle w'' \rangle (x) \in C \cup Y$ (every initial subword of $w''$ maps $O$ into $(n, \infty) \subseteq (r, \infty)$). But then $$\langle w' \rangle (x) = h(\langle w'' \rangle (x))  \in Y$$ and further, we have that $$\langle w \rangle(x) \in Y.$$ 
    As $x \in X_\alpha \subseteq \dom f$, $\langle w \rangle(x) = f(x) \in Y$, so $f(x) \notin X_\alpha$. We have already assumed that this is not the case -- contradiction.
 
    \underline{Case 2}: $\alpha = 0 \mod 2$. We let $X_{\alpha +1} = X_\alpha$, $Z_{\alpha +1} = Z_\alpha$ and it remains to say how we extend $\mathcal{H}_{\alpha +1}$. Property (1) above cannot be destroyed in this case. Let $(C, r) := (C_\alpha, r_\alpha)$. Then $C$ is a countable subset of $(r, \infty)$. If $C$ is not a subset of $X_\alpha$, we may skip this step immediately and let $\mathcal{H}_{\alpha +1} = \mathcal{H}_\alpha$. Otherwise, we have two subcases: 
\begin{itemize}
    \item \underline{Case 2a}: For any word $w \in (\mathcal{G} \cup \dom \mathcal{H}_\alpha)^{<\omega}$, and any open interval $O \subseteq R \cap (r, \infty)$, so that $\langle w \restriction i\rangle[O] \subseteq (r, \infty)$, for each $i \leq \vert w \vert$, there is $x \in X_\alpha \cap O$ with $\langle w \rangle(x) \in C \cup Y$.
    
  Note that $C$ is necessarily dense in $(r, \infty)$, by choice of $\mathcal{G}$.  We would like to find $h \colon R \to R \cap (0, r+1)$, for which we can extend $\mathcal{H}_\alpha$ to $\mathcal{H}_{\alpha +1} = \mathcal{H}_\alpha \cup \{(h,C,r)\}$, retaining the properties (2) and (3) above. As $X_\alpha$ and $Y$ are dense, a simple back-and-forth like construction allows us to find $h$, so that $h$ is the identity on $(0, r)$, $h[X_\alpha \setminus C] \subseteq X_\alpha$ and $h[Y \cup C] \subseteq Y$. In particular then, property (2) is preserved. We need to be more careful to ensure (3) is satisfied. So let us flesh out the construction in more detail: 

    Let $\langle x_n : n \in \omega \rangle$, $\langle c_n : n \in \omega \rangle$ and $\langle y_n : n \in \omega \rangle$ enumerate $(X_\alpha \setminus C) \cap (r, \infty)$, $C \cup (Y \cap (r, \infty))$ and $Y \cap (r, r+1)$ respectively. Further, let $$\langle (k_n, D_n, s_n, \bar w^n, m_n, O_n) : n \in \omega \rangle$$ enumerate all $(k, D, s, \bar w, m, O)$, where $(k, D, s) \in \mathcal{H}_\alpha$, $\bar w = (w_0, \dots, w_{m})$ is a sequence of words in $(\mathcal{G} \cup \dom \mathcal{H}_\alpha)^{<\omega}$ not containing $k$ and $O \subseteq R \cap (s, \infty)$ is an open interval with rational endpoints. We will step by step decide values of $h$ on $(r, \infty)$, preserving the order and mapping into $(r, r+1)$. At each step only finitely many values will have been decided. At step $n$, if $h(x_n)$ or $h(c_n)$ have not been decided yet, we may easily do so with $h(x_n) \in X_\alpha$, or $h(c_n) \in Y_\alpha$ respectively. Also if $y_n$ is not in the range of $h$ yet, we may stipulate that $h(c_k) = y_n$ for some suitable $k$, as $C$ is dense in $(r, \infty)$. This will ensure that the range of $h$ is dense in $(r, r+1)$, as $Y_\alpha$ is, so $h$ uniquely extends to a continuous function with domain $R$. Last we need to deal with $(k, D, s, (w_0, \dots, w_{m}), m, O) = (k_n, D_n, s_n, \bar w^n, m_n, O_n)$. 

   By the continuity of $\langle w_0 \rangle$, $\langle w_0 \rangle[O]$ is a bounded open interval $U_0 = (a_0, b_0)$. Decide $h(a_0)$ and $h(b_0)$ if these values haven't been decided yet. Then $V_1 = (h(a_0), h(b_0))$ is another open interval and so is $\langle w_1 \rangle[V_1] = U_1 = (a_1, b_1)$. Again, ensure that $h(a_1)$ and $h(b_1)$ are decided and let $V_2 = (h(a_1), h(b_1))$. Continue in this fashion until $U_m = (a_m, b_m)$ has been defined. If for some $j \leq \vert w_m \vert$, $\langle w_m \restriction j \rangle[V_m]$ is not contained in $(s, \infty)$, then also $$\langle w_0 h w_1 \dots{w_{m-1}} h (w_m \restriction j) \rangle[O]$$ will not be contained in $(s, \infty)$ and we are done for this step. Otherwise, by setting a few more values of $h$, we may find $x \in X_\alpha \cap O$ and $y$ so that $$\langle w_0 h w_1  \dots {w_{m-2}}  h {w_{m-1}} \rangle(x) = y$$ is decided, while $h(y)$ has not been decided yet. If either $y \in Y$ or $y \in C$, we are done as this will imply that $$\langle w_0 h w_1 \dots{w_{m-1}} h w_m\rangle(x) \in Y.$$ Otherwise, we can only have that $y \in X_\alpha \setminus C$ (see the previous footnote). Let $c$ be greatest below $y$ and $d$ be least above $y$, so that $h(c)$ and $h(d)$ are decided. We know that $\langle w_m \restriction j \rangle[(h(c), h(d))]$ is contained within $(s, \infty)$ for each $j \leq \vert w_m \vert$, as $(c, d) \subseteq U_{m-1}$, so $(h(c), h(d)) \subseteq V_m$. Thus, since property (3) above holds at stage $\alpha$, there is $x' \in X_\alpha \cap (h(c), h(d))$ so that $\langle w_m \rangle(x') \in D \cup Y$. Simply set $h(y) = x'$, thus $$\langle w_0 h w_1 \dots{w_{i-1}} h w_m\rangle(x) \in D \cup Y.$$

     \item \underline{Case 2b}: 2a does not hold. Then we simply let $\mathcal{H}_{\alpha + 1} = \mathcal{H}_\alpha$.
\end{itemize}

\medskip

This finishes the construction of $\langle X_\alpha, Z_\alpha, \mathcal{H}_\alpha : \alpha < \omega_1 \rangle$. As previously anticipated, we now claim that $X = \bigcup_{\alpha < \omega_1} X_\alpha$ is a Hagendorf ordering. Let $Z = \bigcup_{\alpha < \omega_1} Z_\alpha$ and $\mathcal{H} = \bigcup_{\alpha < \omega_1} \mathcal{H}_\alpha$.

First of all, $X$ does not embed into any of its initial segments. Otherwise, there is $\alpha < \omega_1$, with $f_{\alpha}[X] \subseteq X \cap (0, r)$ for some $r \in \mathbb{N}$. So $X_{\alpha} \subseteq X \subseteq \dom f_{\alpha}$ and the range of $f_{\alpha}$ is not cofinal in $\mathbb{R}$, as $X$ is dense in $\mathbb{R}$. But we have ensured that for some $x\in X$, $f(x) \in Z$, so $f(x) \notin X$.

Next, $X$ does embed into every final segment. Namely, if $r \in \mathbb{N}$, $r \geq 1$, find $g_n \in \mathcal{G}$ mapping $(n, n+1)$ into $(n+r, n+r+1)$,  for each $n \in \omega$. Let $g = \bigcup_{n \in \omega} g_n \restriction (n,n+1)$. Then $g$ is an order-preserving map with domain $R$ and $g[X] \subseteq X \cap (r, \infty)$. 

Finally, let $A \subseteq X$ be arbitrary. We would like to show that either, $A$ embeds into an initial segment of $X$, or, $X$ embeds into $A$. Put $B = X \setminus A$. There are two cases mirroring the previous case distinction: 

\begin{itemize}
    \item \underline{Case a}: There is $r \in \mathbb{N}$, so that for any word $w \in (\mathcal{G} \cup \mathcal{H})^{<\omega}$ and any open interval $O \subseteq R \cap (r, \infty)$, where $\langle w \restriction i \rangle[O] \subseteq (r, \infty)$, for each $i \leq \vert w \vert$, there is $x \in X \cap O$ with $\langle w \rangle(x) \in B \cup Y$.

    Then the set of limits $\alpha < \omega_1$, where the above holds, replacing $X$ with $X_{\alpha}$, $\mathcal{H}$ with $\mathcal{H}_{\alpha}$, and $B$ with $B_{\alpha} = \{ x_\xi \in B \cap (r, \infty) : \xi < \alpha  \}$, is clearly a club. Since $\langle A_\alpha : \alpha < \omega_1 \rangle$ is a $\diamondsuit$-sequence, there is some element $\alpha$ of that club, where $A_\alpha = \{ \xi < \alpha : x_\xi \in B \cap (r, \infty) \}$. So $B_{\alpha} = C_{\alpha}$, $r \leq r_\alpha$ and in stage $\alpha$ of the construction, we ended up in Case 2a. But then we have added $(h, B_\alpha, r_\alpha) \in \mathcal{H}$, where $h$ is order-preserving and $h[X \setminus B_\alpha] \subseteq X \cap (0, r_\alpha + 1)$. As $A = X \setminus B \subseteq X \setminus B_\alpha$, $A$ embeds into an initial segment of $X$.
    
    \item \underline{Case b}: Case a does not hold. This means that for every $r \in \mathbb{N}$, there is a word $w_r \in (\mathcal{G} \cup \mathcal{H})^{<\omega}$ and an open interval $O_r \subseteq R \cap (r, \infty)$, with $\langle w_r \rangle[O_r] \subseteq (r, \infty)$ and $\langle w_r \rangle[ O_r \cap X] \cap (B \cup Y) = \emptyset$. Recall that whenever $x \in X$ and $\langle w_r \rangle(x) \notin Y$, then $\langle w_r \rangle(x) \in X$. Hence, $\langle w_r \rangle[ O_r \cap X] \subseteq X \setminus B = A$.

    We may find an increasing sequence $\langle r_n : n \in \omega \rangle$ so that for each $n$, $r_n \geq n +1$ and $r_{n+1}$ is large enough such that $O_{r_n}, \langle w_{r_n} \rangle[O_{r_n}] \subseteq (r_n, r_{n+1})$. For each $n \in \omega$, let $g_n \in \mathcal{G}$ map $(n, n+1)$ into $O_{r_n}$. Finally, define $$g = \bigcup_{n \in \omega} \langle w_{r_n} \rangle \circ (g_n \restriction (n, n+1)).$$

    Then $g \colon R \to R$ is an order-preserving map and $g[X] \subseteq A$, so $X$ embeds into $A$.
\end{itemize}

This finishes the proof.\end{proof}






\section{A real Hagendorf order under \texorpdfstring{$\PFA$}{PFA}}\label{sec:pfa}

We show that a variation on the proof of Theorem~\ref{thm:diamond} works under $\PFA$. First, we need to consider a modification of Baumgartner's forcing for adding an isomorphism between $\aleph_1$-dense sets of reals. There is no essential difference except that there are two pairs of sets of reals $(A_0,B_0)$, $(A_1, B_1)$ that an order-preserving map is added for simultaneously. This will replace the back-and-forth construction in the previous proof. 

\begin{definition}\label{def:Baum}
    For $i\in 2$, let $A_i, B_i$ be sets of reals and let $\bar A_i = \langle A_{i, \beta} : \beta < \omega_1 \rangle$, $\bar B_i = \langle B_{i, \beta} : \beta < \omega_1 \rangle$ with $A_i = \bigcup_{\beta < \omega_1} A_{i, \beta}$, $B_i = \bigcup_{\beta < \omega_1} B_{i, \beta}$. The we write $\mathbb{P}(\bar A_0, \bar B_0, \bar A_1, \bar B_1)$ for the forcing notion consisting of finite partial order-preserving maps $p$ from $A_0 \cup A_1$ to $B_0 \cup B_1$, ordered by extension and where $p[A_{i, \beta}] \subseteq B_{i, \beta}$, for each $i \in 2$, $\beta < \omega_1$.
\end{definition}

We say that a set $Q$ is $\aleph_1$ dense in a non-empty open set $O$, if $\vert Q \cap U \vert = \aleph_1$ for every non-empty open $U \subseteq O$.

\begin{lemma}[essentially Baumgartner \cite{Baumgartner}]\label{lem:Baum}
    Assume $\CH$. Let $A_0, A_1 \subseteq \mathbb{R}$ be disjoint and $B_0, B_1$ be disjoint $\aleph_1$-dense subsets of some open subset of $\mathbb{R}$. Further, let $Q_\beta \subseteq B_0$ be $\aleph_1$-dense in an open set $O_\beta$, for each $\beta < \omega_1$. Then there are sequences $\bar A_i$, $\bar B_i$ as in Definition~\ref{def:Baum} such that \begin{itemize}
        \item for each $\beta < \omega_1$, $B_{0, \beta}$, $B_{1,\beta}$ are countable dense,
        \item for each $\beta < \omega_1$ and all $\beta' \in [\beta, \omega_1)$, $Q_{\beta} \cap B_{0, \beta'}$ is dense in $O_{\beta}$, 
        \item $\mathbb{P}(\bar A_0, \bar B_0, \bar A_1, \bar B_1)$ is ccc. 
        
    \end{itemize}
\end{lemma}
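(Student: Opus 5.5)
We follow Baumgartner's original argument that, under $\CH$, two $\aleph_1$-dense sets admit a ccc poset of finite partial isomorphisms once both sides are suitably stratified. The plan has three parts: fix a continuous chain of countable elementary submodels, read the stratification off that chain, and then run Baumgartner's ccc argument for the resulting poset.

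\textbf{The models.} Using $\CH$, fix a continuous $\in$-chain $\langle M_\beta : \beta < \omega_1 \rangle$ of countable elementary submodels of some $H(\theta)$. Each $M_\beta$ contains $A_0, A_1, B_0, B_1$, the sequences $\langle Q_\beta\rangle$ and $\langle O_\beta\rangle$, and a well-order of $\mathbb{R}$ of type $\omega_1$. Consequently $\bigcup_\beta M_\beta \supseteq \mathbb{R}$, and $\beta \in M_{\beta+1}$.

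\textbf{The stratification.} Put
\[
A_{i,\beta} = A_i \cap (M_{\beta+1}\setminus M_\beta), \qquad B_{i,\beta} = B_i \cap (M_{\beta+1}\setminus M_\beta)
\]
(with $M_{-1}=\emptyset$ at $\beta=0$). These pieces partition $A_i$ and $B_i$.

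\emph{Density of $B_{i,\beta}$.} Let $I$ be a rational interval meeting the open set in which $B_i$ is $\aleph_1$-dense. Then $B_i\cap I$ is uncountable and lies in $M_{\beta+1}$, so it is not contained in the countable set $M_\beta$. Hence $B_{i,\beta}\cap I \neq \emptyset$.

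\emph{Density of $Q_\beta$.} Let $\beta' \geq \beta$. Then $Q_\beta$ and $O_\beta$ lie in $M_{\beta'+1}$, and $Q_\beta$ is uncountable in every rational subinterval of $O_\beta$. The same elementarity argument gives $Q_\beta \cap B_{0,\beta'}\cap I \ne\emptyset$ for each such subinterval $I$. (If one wants $B_{0,\beta'}$ to be exactly countable dense, it already is, since it is a subset of a countable model.)

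\textbf{The ccc argument.} This is the main obstacle. Take an uncountable family of conditions $p_\xi$. By a $\Delta$-system argument and a pressing-down argument we thin to a subfamily with the following properties:
\begin{itemize}
\item all $p_\xi$ have the same size $n$ and the same ``type'': each point's index $i$, the order pattern of domain and range, and which points are shared;
\item the rational intervals separating the points are the same for all $\xi$;
\item the non-root points of $p_\xi$ all have levels $\geq \delta_\xi$, where $\langle \delta_\xi\rangle$ is increasing and $p_\eta \in M_{\delta_\xi}$ for $\eta<\xi$.
\end{itemize}

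Note that a point $a \in A_{i,\gamma}$ must be sent into $B_{i,\gamma}$, i.e.\ to a point of the same level $\gamma$. So after thinning, a non-root domain point and its image share a level. Conversely, any two points sharing a level $\gamma$ both lie in $M_{\gamma+1}\setminus M_\gamma$.

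Baumgartner's combinatorial step now shows that some $p_\eta \cup p_\xi$ is order-preserving. Fix $\xi$. The set of $\eta$ whose conditions realize the same type and the same separating rationals is definable in $M_{\delta_\xi}$ from parameters there. We approximate $p_\xi$ by such $p_\eta$ arbitrarily closely. Then, by an $n$-step inductive reflection (one coordinate at a time, as in Baumgartner's and Todorcevic's treatment), we find $\eta<\xi$ such that each domain and range point of $p_\eta$ lies on the same side of each point of $p_\xi$ as its partner does.

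The level bookkeeping is what makes this work. Since $a$ and $p_\xi(a)$ have the same level, both are outside $M_\gamma$ whenever one is. So when we reflect a pair $(a,p(a))$, the relevant points are simultaneously new over the model and lie in the same interval cut. This consistency of cuts is exactly what the restriction $p[A_{i,\beta}]\subseteq B_{i,\beta}$ buys.

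Finally, disjointness of $A_0$ from $A_1$ and of $B_0$ from $B_1$ ensures that the two maps never conflict. The union is then a condition extending both $p_\eta$ and $p_\xi$, which proves ccc.
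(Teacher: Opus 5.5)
Your verification of the density clauses is fine. The gap is the ccc: the stratification $A_{i,\beta}=A_i\cap(M_{\beta+1}\setminus M_\beta)$, $B_{i,\beta}=B_i\cap(M_{\beta+1}\setminus M_\beta)$ does not make $\mathbb{P}(\bar A_0,\bar B_0,\bar A_1,\bar B_1)$ ccc in general, so the actual content of the lemma is not established.

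Here is a concrete instance. Let $B_0,B_1\subseteq(0,1)$ be disjoint $\aleph_1$-dense sets with $B_0=\{1-b : b\in B_0\}$ (symmetrize two disjoint $\aleph_1$-dense subsets of $(0,\frac12)$), and put $A_0=B_0$, $A_1=B_1$. If $f\in M_\gamma$ is injective, then $x\in M_{\gamma+1}\setminus M_\gamma$ iff $f(x)\in M_{\gamma+1}\setminus M_\gamma$. Applied to $x\mapsto 1-x$, this shows that $a$ and $1-a$ always have the same level. Hence $\{(a,1-a)\}$ is a condition for every $a\in A_0$, and any two of these are incompatible because $x\mapsto 1-x$ reverses the order. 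This gives an uncountable antichain.

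Your ``level bookkeeping'' paragraph is exactly where the argument fails. Knowing that $a$ and $p(a)$ are both new over $M_\gamma$ gives no control over cuts, since $p(a)$ may be definable from $a$ over $M_\gamma$. No stratification read off a fixed chain of models in this way can avoid this, because $M_{\gamma+1}\setminus M_\gamma$ is closed under injections lying in $M_\gamma$.

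The missing ingredient is that points of the same level must be \emph{generic for each other} over the earlier levels. Baumgartner's construction, to which the paper defers, builds the countable pieces by recursion rather than reading them off models. Using $\CH$, list the $\aleph_1$ many relevant real-coded functions (e.g.\ the strictly monotone Borel functions supplied by Fact~\ref{fact:borel}) as $\langle f_\beta:\beta<\omega_1\rangle$. At stage $\alpha$, choose the pieces to avoid the relations given by the $f_\beta$, $\beta<\alpha$; in particular, no $b\in B_{i,\alpha}$ is $f_\beta(a)$ for an $a\in A_{i,\alpha}$. The pieces must still exhaust $A_i,B_i$ and meet the density clauses. For the $Q_\beta$'s this is possible because only countably many points are excluded at each stage.

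This also repairs a second defect of your sketch. The set of $\eta$ realizing a given type is defined from the antichain $\langle p_\eta\rangle$, which is not an element of $M_{\delta_\xi}$, since your chain was fixed before the antichain was given. In the correct argument one reflects inside a countable model $N$ containing the antichain and the enumeration. The link to the fixed stratification is that the witnesses obtained in $N$ are real-coded, hence covered by some $f_\beta$ with $\beta<N\cap\omega_1$.

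For singleton conditions, for instance, take an uncountable antichain $\{(a_\xi,b_\xi)\}$, thinned to distinct coordinates and a fixed $i$. Then $a_\xi\mapsto b_\xi$ is strictly decreasing on an uncountable set, so it is covered by some $f_\beta$, and any $\xi$ whose level exceeds $\beta$ contradicts genericity. Larger conditions need this mechanism carried through coordinate by coordinate, which is precisely the work your ``$n$-step reflection'' sentence asserts but does not do.
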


For a proof, we refer directly to Baumgartner's original argument or the excellent modern exposition given in \cite{Vartanian}. There is nothing of difficulty in generalizing Baumgartner's construction to the above.


\begin{thm}\label{thm:PFA=>Hagendorf_real}
    Assuming $\PFA$, there is a Hagendorf order $X \subseteq \mathbb{R}$.
\end{thm}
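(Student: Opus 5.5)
We follow the blueprint of the proof of Theorem~\ref{thm:diamond=>Hagendorf_real}, but replace the recursion of length $\omega_1$ by one of length $\omega_2 = \mathfrak{c}$. $\PFA$ gives $2^{\aleph_0} = \aleph_2$, and via Fact~\ref{fact:borel} we list all relevant order-preserving maps as $\langle f_\alpha : \alpha < \omega_2 \rangle$. We build $X = \bigcup_{\alpha<\omega_2} X_\alpha \subseteq R$ of size $\aleph_2$ together with forbidden sets $Y$ and $Z_\alpha$, the countable family $\mathcal{G}$ of progressive self-maps, and triples $\mathcal{H}_\alpha$. Now each $X_\alpha$ has size $\aleph_1$ and $\mathcal{H}_\alpha$ consists of triples $(h,C,r)$ with $|C| \leq \aleph_1$. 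We keep the analogues of invariants (1)--(3), with ``countable dense'' replaced by ``$\aleph_1$-dense'' where necessary, and we strengthen (3) so that the witnesses $x$ form an $\aleph_1$-dense set in $O$. The point of this strengthening is that every relevant set then meets each interval in $\aleph_1$ points, which is what Lemma~\ref{lem:Baum} needs.

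The odd steps (killing $f_\alpha$ if it maps $X$ into a bounded initial segment) go through almost verbatim. We need some $x \notin \bigcup$ obstructions. The sets $R_w = \{x : f(x) = \langle w\rangle(x)\}$ are again countably many, and the set of $x$ obstructed by the first two clauses has size $\aleph_1 < \mathfrak{c}$. By Baire category, $\aleph_1$ many nowhere dense sets plus $\aleph_1$ points cannot cover an interval under $\MA_{\aleph_1}$, which follows from $\PFA$. So some $R_w$ is dense on an open $O \subseteq (n,\infty)$, and we derive the same contradiction from (3).

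The even steps handle the diamond guess, and here $\PFA$ must replace $\diamondsuit$. At the end, if Case~a holds for $B = X \setminus A$ with witness $r$, we need some $C \subseteq B \cap (r,\infty)$ of size $\aleph_1$ that is already ``good'' at a stage $\alpha$. Then we need $h$ mapping $X \setminus C$ into $X \cap (0,r+1)$ and $C$ into $Y$. Rather than guessing $C$ in advance, we produce $h$ at the end, or at a reflecting stage, by forcing. We first use a closure argument: take an elementary submodel $M$ of size $\aleph_1$ with $\omega_1 \subseteq M$ and set $C = B \cap M$. Then we apply $\PFA$ to a ccc poset of the form $\mathbb{P}(\bar A_0,\bar B_0,\bar A_1,\bar B_1)$ from Lemma~\ref{lem:Baum}. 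Here $A_0 = (X \cap M)\setminus C$, $A_1 = C$, $B_0 = X \cap M \cap (r,r+1)$ and $B_1 = Y' \cap (r,r+1)$, where $Y'$ is an $\aleph_1$-dense enlargement of the forbidden set. The sets $Q_\beta$ enumerate the $\aleph_1$-many requirements coming from the strengthened (3): these are pairs (word, interval) that must still hit $D \cup Y$ after composing with $h$. $\CH$ is needed for the lemma; we apply it in $M$ or after collapsing, using that $\PFA$ applies to ccc posets of size $\aleph_1$ ($\MA_{\aleph_1}$ suffices). Meeting $\aleph_1$ dense sets yields $h$ on $M$ that extends continuously. The density clause on $Q_\beta \cap B_{0,\beta'}$ plays exactly the role of the step-by-step choice $h(y) = x'$ in Case~2a. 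To make all of this consistent with later stages, we organize the construction so that each stage $\alpha$ of cofinality $\omega_1$ handles every $C$ of size $\aleph_1$ lying in $X_\alpha$ that is Case~a-good. There are $\aleph_2^{\aleph_1} = \aleph_2$ such $C$ by $\PFA$, so a bookkeeping over $\omega_2$ works, replacing the $\diamondsuit$-sequence.

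The final verification copies the end of the proof of Theorem~\ref{thm:diamond}. First, $X$ does not embed into an initial segment, by the odd steps. Second, $\mathcal{G}$ gives embeddings into final segments. For arbitrary $A$: in Case~b the map $g$ is built identically. In Case~a we reflect to a stage $\alpha$ of cofinality $\omega_1$ where the goodness of $B \cap X_\alpha$ already holds, which works by a club argument in $\omega_2$. At that stage a triple $(h, B_\alpha, r)$ was added, so $A \subseteq X \setminus B_\alpha$ embeds into $X \cap (0,r+1)$. The main obstacle I expect is the forcing step: ensuring that the Baumgartner-style poset is ccc while simultaneously encoding the requirement (3) for all $\aleph_1$ existing triples and words. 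I would first try to make the requirements into the sets $Q_\beta$ exactly as Lemma~\ref{lem:Baum} allows and check that genericity over them gives the composite property.
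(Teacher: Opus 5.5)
Your architecture (a recursion of length $\omega_2$, stages of size $\aleph_1$, the $\aleph_1$-dense form of property (3), and a Baumgartner-type poset to produce $h$) matches the paper's. However, two steps fail as you describe them.

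First, Case~b cannot be ``built identically'' with a fixed countable $\mathcal{G}$. Case~a must now ask for $\aleph_1$-densely many witnesses, since otherwise adding $(h,C,r)$ would violate your strengthened (3). So its failure only gives, for each $r$, a word $w_r$ and an interval $O_r$ on which $E_r=\{x\in O_r\cap X : \langle w_r\rangle(x)\in B\cup Y\}$ is \emph{countable}, not empty. Nothing prevents the $g_n\in\mathcal{G}$ that send $(n,n+1)$ into $O_{r_n}$ from mapping points of $X$ into $E_{r_n}$, and then $g[X]\not\subseteq A$. A countable family fixed in advance has no reason to dodge all the possible countable sets $E_r$. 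The paper handles this by letting $\mathcal{G}_\alpha$ grow. At stages $\alpha\equiv 2 \bmod 3$, a bookkept triple $(n,O,E)$ with $E$ countable is met by adding a continuous progressive $g$ with $g[Y]\subseteq Y$, $g[X_\alpha]\subseteq X_\alpha\setminus E$ and $g[(n,n+1)]\subseteq O$. This $g$ is produced by the same forcing argument as the $h$'s. Property (3) survives because an $\aleph_1$-dense set minus a countable set is still $\aleph_1$-dense.

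Second, the bookkeeping meant to replace $\diamondsuit$ does not work. Handling every Case-a-good $C\subseteq X_\alpha$ at stage $\alpha$ means adding up to $2^{\aleph_1}=\aleph_2$ functions $h$ at once. This causes three problems:
\begin{enumerate}
\item A single $\PFA$ application meets only $\aleph_1$ dense sets, so (3) cannot be secured for all these $h$'s and their mixed words.
\item Afterwards there are $\aleph_2=\mathfrak{c}$ words, so later stages, which must be closed under them, need no longer have size $\aleph_1$.
\item The Baire-category argument in the odd steps tolerates $\aleph_1$ nowhere dense sets, not $\mathfrak{c}$ many, so it collapses.
\end{enumerate}
Postponing each $C$ to a later stage $\beta$ fails too. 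Goodness of $C$ is relative to $X_\beta$, $\mathcal{G}_\beta$, $\mathcal{H}_\beta$ and is not preserved as new words appear. The final Case~a only yields goodness of exactly $B\cap X_\alpha$ on a club of $\alpha$, so that set has to be \emph{guessed}. No substitute is needed: $\PFA$ already implies $\diamondsuit_{\omega_2}$, and the paper uses it exactly as $\diamondsuit_{\omega_1}$ was used before.

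Two smaller points also need fixing. Lemma~\ref{lem:Baum} requires $\CH$, so the poset is only ccc after a $\sigma$-closed forcing $\mathbb{P}$ adding $\CH$, and you must apply $\PFA$ to the proper iteration $\mathbb{P}*\dot{\mathbb{Q}}$. $\MA_{\aleph_1}$ does not suffice (it does not even imply $\BA$), and you cannot work ``in $M$'', which satisfies $\neg\CH$ by elementarity. Also, $h$ must be added during the recursion, not at the end on $X\cap M$, so that later stages close $X$ under $h$. Otherwise points of $X\setminus M$ need not be sent into $X$.
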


Note that under $\PFA$, a Hagendorf suborder of the reals must have size $\aleph_2$ by Theorem~\ref{thm:BA=>noaleph1}.

\begin{proof}
    We proceed almost exactly as in the proof of Theorem~\ref{thm:diamond=>Hagendorf_real}. There a few important differences though. First, as one can probably guess, we replace $\omega_1$ with $\omega_2$, ``countable" with ``of size $\aleph_1$" and ``dense" with $\aleph_1$-dense everywhere. Next, instead of fixing a single collection $\mathcal{G}$ throughout the construction, we will have to progressively add more $g$'s. Thus this time, we will construct $\langle X_\alpha, Z_\alpha, \mathcal{H}_\alpha, \mathcal{G}_\alpha : \alpha < \omega_2 \rangle$. In the proof of Theorem~\ref{thm:diamond=>Hagendorf_real}, at each step $\alpha$, simply replace each occurrence of ``$\mathcal{G}$" with ``$\mathcal{G}_\alpha$". The most important difference occurs with Property (3), which will now read as: 

    \begin{enumerate}
    \setcounter{enumi}{2}
        \item For any \begin{enumerate}
    \item $(h, C, r) \in \mathcal{H}_\alpha$,
    \item word $w \in (\mathcal{G}_\alpha \cup \dom \mathcal{H}_\alpha)^{<\omega}$ not containing $h$,
    \item and any open interval $O \subseteq R \cap (r, \infty)$, so that $\langle w \restriction i\rangle[O] \subseteq (r, \infty)$, for each $i \leq \vert w \vert$,
    \end{enumerate}
    the set of $x \in X_\alpha \cap O$ with $\langle w \rangle(x) \in C \cup Y$ is \emph{$\aleph_1$-dense} in $O$.
    \end{enumerate}

    There are no other notable changes in the rest of the setup. $\PFA$ implies $\diamondsuit_{\omega_2}$ (see e.g. \cite[Theorem 5.8]{Devlin}), thus a suitable diamond sequence exists. Just as with $Z_0$ and $\mathcal{H}_0$ we may pick $\mathcal{G}_0$ to be empty. 
    
    Next, instead of dividing the cases into odd and even $\alpha$, we will divide into three cases according to $\alpha = 0,1,2 \mod 3$, as we need extra steps to extend the $\mathcal{G}_\alpha$'s. To this end, there will be a sequence $\langle n_\alpha, O_\alpha, E_{\alpha}:\alpha = 2 \mod 3, \alpha < \omega_2 \rangle$ enumerating all triples $(n, O, E)$, where $n \in \omega$, $O \subseteq R$ is an open interval with $n+1 \leq \inf O$ and $E \subseteq O$ is countable. 

    Case 1 now corresponds to $\alpha = 1 \mod 3$ and works in the same manner as before. The only obstacle we encounter is that we cannot apply the Baire Category Theorem anymore. On the other hand, $\PFA$ implies that the union of $\aleph_1$-many nowhere dense sets is meager. Thus there is no issue here and we may proceed. 

    Case 2 corresponds to $\alpha = 0 \mod 3$ (in particular, there is a club of such stages). The diamond sequence hands us a pair $(C,r)$ and we may assume that $C \subseteq X_\alpha$. As before, we split into two subcases, this time, mirroring our modified property (3): 

    \medskip

\underline{Case 2a}: For any word $w \in (\mathcal{G}_\alpha \cup \dom \mathcal{H}_\alpha)^{<\omega}$, and any open interval $O \subseteq R \cap (r, \infty)$, so that $\langle w \restriction i\rangle[O] \subseteq (r, \infty)$, for each $i \leq \vert w \vert$, there is an $\aleph_1$-dense (in $O$) set of $x \in X_\alpha \cap O$ with $\langle w \rangle(x) \in C \cup Y$.

        We would like find a function $h$ just as before and add $(h, C, r)$ to $\mathcal{H}_{\alpha}$. This time we cannot rely on a back and forth argument. Instead we use Lemma~\ref{lem:Baum} and apply $\PFA$. More specifically, let $A_0 = (X_\alpha \setminus C) \cap (r, \infty)$, $B_0 = X_\alpha \cap (r, r+1)$, and $A_1 = C \cup Y  \cap (r, \infty)$, $B_1 = Y \cap (r, r+1)$.  Note that $B_0$ and $B_1$ are $\aleph_1$-dense in $(r, r+1)$. Further, we let $\langle Q_\beta, O_\beta : \beta < \omega_1 \rangle$ enumerate all pairs $(Q, O)$, where $O \subseteq (r, r+1)$ is an open interval with rational endpoints, $Q \subseteq B_0$ is $\aleph_1$-dense in $O$, and for some $w \in (\mathcal{G}_\alpha \cup \dom \mathcal{H}_\alpha)^{<\omega}$ and some $D$ appearing as the second coordinate of some element of $\mathcal{H}_\alpha$, $Q$ is the set of $x \in O \cap X_\alpha$, with $\langle w \rangle(x) \in D\cup Y$.

        Let $\mathbb{P}$ be a $\sigma$-closed poset forcing $\CH$. In any $\mathbb{P}$-generic forcing extension, we find $\bar A_0, \bar A_1, \bar B_0, \bar B_1$ as in Lemma~\ref{lem:Baum}. So let $\dot{\mathbb{Q}}$ be a $\mathbb{P}$-name for a suitable forcing $\mathbb{P}(\bar A_0, \bar A_1, \bar B_0, \bar B_1)$. Then $\mathbb{P} * \dot{\mathbb{Q}}$ is proper. A standard genericity argument and an application of $\PFA$ yields an order-preserving function from $(r, \infty)$ to $(r, r+1)$ mapping $A_0$ into $B_0$ and $A_1$ into $B_1$.\footnote{Note that $A_1$ is dense in $(r, \infty)$ whereby the generic order-embedding can be uniquely extended to a total and continuous function on $(r, \infty)$.} We need to ensure our modified property (3) is preseved when adding $(h, C, r)$ to $\mathcal{H}_\alpha$.\footnote{Recall that $h \colon R \to R \cap (0, r+1)$ is set to be the identity on $R \cap (0, r)$ and corresponds to the function we are trying to construct on $R \cap (r, \infty)$.} This is another genericity argument and it is here that the modification of (3) and the use of $\langle Q_\beta, O_\beta : \beta < \omega_1 \rangle$ become relevant. 
        
        Let us work within a $\mathbb{P}$-generic extension for now and let $\mathbb{Q} = \mathbb{P}(\bar A_0, \bar A_1, \bar B_0, \bar B_1)$. The argument we will make can easily be translated into a genericity argument for $\mathbb{P} * \dot{\mathbb{Q}}$. Let $(k, D, s, \bar w, m, O)$ be as in the proof of Theorem~\ref{thm:diamond=>Hagendorf_real}. Specifically, $(k, D, s) \in \mathcal{H}_\alpha$, $\bar w = (w_0, \dots, w_m)$ is a sequence of words in $(\mathcal{G}_\alpha \cup \mathcal{H}_\alpha)^{<\omega}$ not containing $k$ and $O \subseteq R \cap (s, \infty)$ is an open interval, say with endpoints in the dense set $A_1$. Let $p \in \mathbb{P}(\bar A_0, \bar A_1, \bar B_0, \bar B_1)$ be arbitrary. For a given countable subset $I$ of $X_\alpha \cap O$, we would like to find $q \leq p$ and $x \in (X_\alpha \cap O) \setminus I$ so that $q$ forces that $$\langle w_0 h w_1  \dots {w_{m-1}}  h {w_{m}} \rangle(x) \in D \cup Y$$ if \begin{equation}\tag{$*$}
            \langle (w_0 h w_1  \dots {w_{m-1}}  h {w_{m}}) \restriction j \rangle[O] \subseteq (s, \infty), \text{ for each $j$.}\footnote{Note that it is not necessary to make a separate genericity argument for each of the $\aleph_2$-many countable subsets $I$ of $X_\alpha \cap O$, since $\aleph_1$-many such sets suffice to cover them.}
        \end{equation}  Towards this end, we may assume that $p$ already forces $(*)$. Similar to the situation in Theorem~\ref{thm:diamond=>Hagendorf_real}, we find $p_0 \leq p$ so that $p_0$ forces that $$\langle w_0 h w_1  \dots {w_{m-2}}  h {w_{m-1} h} \rangle[O] = V,$$ for some open set $V \subseteq (r, r+1)$. By property (3), there is an $\aleph_1$-dense set of $z \in X_\alpha \cap V$ with $\langle w_m \rangle(z) \in D \cup Y$. This set is of the form $Q_\beta$, for some $\beta < \omega_1$, where $O_\beta = V$. Let $\beta^*$ be large enough, such that for all $\beta' \geq \beta^*$, $Q_{\beta} \cap B_{0, \beta'}$ is dense in $O_{\beta}$. By the ccc, we can find a countable set $J \subseteq X_\alpha \cap O$ such that it is forced that $$\{ x \in X_\alpha \cap O : \langle w_0 h w_1 \dots {w_{m-2}} h {w_{m-1}} \rangle(x) \in \bigcup_{\beta' < \beta^*} A_{0, \beta'} \} \subseteq J.$$

        Now we can find $x \in (X_\alpha \cap O) \setminus (I \cup J)$ and $p_1 \leq p_0$ so that $p_1$ decides that $\langle w_0 h w_1  \dots {w_{m-2}}  h {w_{m-1}} \rangle(x) = y$, where $y\notin \dom p_1$. Here, note that this value is forced to be a member of $X_\alpha \cup Y$. If $ y \in C \cup Y$ (e.g. when $x \in C$), we can let $q = p_1$ and we are done. Otherwise, $y \in A_{0, \beta'}$ for some $\beta' > \beta^*$, as $x \notin J$. Since $Q_{\beta} \cap B_{0, \beta'}$ is dense in $V$, we can easily extend to $q \leq p_1$, where $q(y) \in Q_{ \beta}$, and we are done once again.

        \underline{Case 2b}: 2a does not hold. Just as before, we can ignore this case.

        \medskip

        Finally, we have:

        \medskip
        
        \underline{Case 3:} $\alpha = 2 \mod 3$. We are only extending $\mathcal{G}_\alpha$ by a single function $g$ in this case. We are given the triple $(n, O, E) = (n_\alpha, O_\alpha, E_\alpha)$, and our goal is to produce a continuous progressive function $g\colon R \to R$, with $g[Y] \subseteq Y$, $g[X_\alpha] \subseteq X_\alpha \setminus E$ and $g[(n, n+1)] \subseteq O$. In doing so we must preserve property (3). This can be achieved almost identically to Case 2a above. Note specifically, that, if $Q \subseteq X_\alpha$ is $\aleph_1$-dense in an open set $O$, so is $Q \setminus E$, as $E$ is countable.  

        \medskip
        
        This finishes the construction of the order $X = \bigcup_{\alpha < \omega_2} X_\alpha$. Just as before, $X$ does not embed into any proper initial segment but does embed into every final segment, thanks to the functions in $\mathcal{G} = \bigcup_{\alpha < \omega_2} \mathcal{G}_\alpha$. For arbitrary $A \subseteq X$, again, let $B = X \setminus A$. 

        \medskip

\underline{Case a}: There is $r \in \mathbb{N}$, so that for any word $w \in (\mathcal{G} \cup \mathcal{H})^{<\omega}$ and any open interval $O \subseteq R \cap (r, \infty)$, where $\langle w \restriction i \rangle[O] \subseteq (r, \infty)$, for each $i \leq \vert w \vert$, there are $\aleph_1$-densely-many $x \in X \cap O$ with $\langle w \rangle(x) \in B \cup Y$.

      \noindent An analogous guessing argument shows that $A$ embeds into $X \cap (0, r+1)$.
      
\underline{Case b}: For each $r \in \mathbb{N}$, there is a word $w_r \in (\mathcal{G} \cup \mathcal{H})^{<\omega}$ and an open interval $O_r \subseteq R \cap (r, \infty)$, with $\langle w_r \rangle[O_r] \subseteq (r, \infty)$ and $\langle w_r \rangle[ O_r \cap X] \cap (B \cup Y)$ is countable.

      \noindent Let $E_r$ denote the set of $x \in O_r \cap X$ with $\langle w_r \rangle(x) \in B \cup Y$, which is countable itself. This time, for every $n$ and $r \geq n+1$, we find a function $g \in \mathcal{G}$ mapping $(n, n+1)$ into $O_r \setminus E_r$. Again, proceed as before. 
\end{proof}

\section{Open questions}

\begin{quest}
    Is it consistent that there are no real Hagendorf types? Is it consistent with $\CH$?
\end{quest}

We remark that, due to Fact~\ref{fact:borel}, the statement that $X \subseteq \mathbb{R}$ is not Hagendorf can be expressed by a formula of the form $\exists A \subseteq X \varphi(A,X)$, where $\varphi$ is projective. It follows by a general absoluteness argument that, if $X \subseteq \mathbb{R}$ is Hagendorf of size $\mathfrak{c}$, then this cannot be destroyed with $<\mathfrak{c}$-closed forcing.

Another option is to add a subset $A \subseteq X$ with finite approximations. It is not difficult to see that in the generic extension we have $A < X$.  Namely, any order-embedding of $X$ into $A$ must be covered by a Borel order-embedding $f$ appearing in $V[A \cap C]$, for some countable $C \in V$. But then generically $f(x) \notin A$, for some $x \in X \setminus C$. Unfortunately, a similar argument for showing that $A$ does not embed in an initial segment fails precisely because of the obstruction Theorem~\ref{thm:smallembed} poses.

This brings us to the following line of thought which might lead somewhere:

\begin{lemma}
   Let $X = \{ x_\xi : \xi < \omega_1 \} \subseteq \mathbb{R}$ be Hagendorf and suppose that there is a normal Suslin tree $T \subseteq 2^{<\omega_1}$ such that $X \setminus \{ x_\xi : s(\xi) = 0 \} \not< X$ for each $s \in T$.\footnote{The tree is \emph{normal} if for every $s \in T$, $s^\frown 0, s^\frown 1 \in T$ and for $\operatorname{lth}(s) \leq \xi < \omega_1$, there is $t \supseteq s$, $t \in T$, $\operatorname{lth}(t) = \xi$.} Then forcing with $T$ destroys $X$.
\end{lemma}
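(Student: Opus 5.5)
Forcing with $T$ adds a cofinal branch $b \colon \omega_1 \to 2$, the union of the generic filter. Set $A = \{x_\xi : b(\xi) = 1\}$, the complement in $X$ of $\{x_\xi : b(\xi)=0\}$. The plan is to show that in $V[b]$ we have $X \not\leq A$, while $A$ does not embed into any proper initial segment of $X$. Then $A$ witnesses the failure of clause (2), so $X$ is no longer Hagendorf.

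Two preliminary facts will be used throughout.
\begin{itemize}
\item Since $T$ is Suslin, forcing with it is ccc and adds no new reals. In fact it adds no new countable sets of ordinals, because a Suslin tree is $\omega$-distributive. So $\omega_1$ is preserved and $X$ keeps the same enumeration.
\item By Fact~\ref{fact:borel}, every order-embedding between subsets of $\mathbb{R}$ is covered by a Borel order-preserving map. Such a map is coded by a real, so it lies in $V$.
\end{itemize}

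\emph{$X$ does not embed into $A$.} Suppose some condition $s\in T$ forces that $\dot f$ embeds $X$ into $\dot A$. Using the first fact, strengthen $s$ to decide a Borel code, so that $s \Vdash \hat f \restriction X = \dot f$ for a fixed Borel order-preserving $\hat f \in V$ with $X \subseteq \dom \hat f$. Then $\hat f[X] \subseteq X$, and since $X = \{x_\xi:\xi<\omega_1\}$, the map $\hat f$ induces $\pi \colon \omega_1 \to \omega_1$ in $V$ with $\hat f(x_\xi) = x_{\pi(\xi)}$. The embedding lands in $A$ exactly when $b(\pi(\xi)) = 1$ for all $\xi$. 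The set $\ran \pi$ is uncountable. Hence there is $\xi$ with $\pi(\xi) \geq \operatorname{lth}(s)$, and by normality $s$ extends to some $t$ with $t(\pi(\xi)) = 0$. This $t$ forces $\hat f(x_\xi) \notin A$, a contradiction.

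\emph{$A$ does not embed into a proper initial segment of $X$.} This direction uses the hypothesis. Suppose $s \Vdash$ ``$\dot f$ embeds $\dot A$ into $X \cap (-\infty, c)$'' for some $c \in X$. As before, extend $s$ to decide a Borel $\hat f \in V$ covering $\dot f$; call the extension $s$ again. For each $t \leq s$, consider the set
\[
A_t = X \setminus \{x_\xi : t(\xi) = 0\}.
\]
This is the set of points not yet excluded from $A$ by $t$, and it lies in $V$. The key claim will be that, provided the extension $t$ of $s$ is chosen suitably, $\hat f$ maps $A_t$ into $X \cap (-\infty,c)$. Then $A_t$ embeds into a proper initial segment of $X$, so $A_t < X$ in $V$, which contradicts the hypothesis $A_t \not< X$.

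\emph{Main obstacle.} The difficulty is justifying the key claim. Consider a point $x_\eta \in A_t$ with $\eta \geq \operatorname{lth}(t)$. Extending $t$ to put $\eta$ into $A$ forces $\hat f(x_\eta) \in X\cap(-\infty,c)$, so such points are handled. The remaining points are those $x_\xi$ with $\xi < \operatorname{lth}(t)$ and $t(\xi)=1$. These are already in $A$, so $t$ forces that $\hat f$ sends them to the right place. Hence for every $x \in A_t$, some $t' \leq t$ forces $\hat f(x) \in X\cap(-\infty,c)$. But ``$\hat f(x) \in X\cap(-\infty,c)$'' is a ground-model statement, so it is simply true in $V$. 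Therefore $\hat f \restriction A_t$ already works in $V$.

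Thus the contradiction comes from the hypothesis applied to nodes $t$ of $T$. The only delicate points are normality, which is used to extend $t$ at arbitrary coordinates, and the fact that no new reals are added, which places $\hat f$ in $V$.
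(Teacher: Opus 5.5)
Your proof is correct and follows the paper's argument. You first show $X \not\leq A$ using a ground-model Borel cover together with a node extension that puts some $x_{\pi(\xi)}$ outside $A$. You then observe that a condition forcing $\dot f[\dot A] \subseteq X \cap (-\infty,c)$ already yields, in $V$, an embedding of $X \setminus \{x_\xi : s(\xi)=0\}$ into that same segment, which contradicts the hypothesis on $T$. Your ``main obstacle'' needs no special choice of $t$: the pointwise argument works for every $t \leq s$, including $s$ itself, and that is exactly the paper's one-line step.
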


\begin{proof}
    Let $b$ be a generic branch through $T$ and let $A := \{ x_{\xi} : b(\xi) = 1 \}$. Since $T$ is $\sigma$-distributive, no new reals are added and we conclude that $A < X$, just as in the argument above. Suppose that $f$ embeds $A$ in a proper initial segment $(-\infty, x)$ of $X$ and this is forced by $s \in T$. Now note that $X \setminus \{ x_\xi : s(\xi) = 0 \}$ must also embed in that same segment, contradicting the assumption on the tree $T$.
\end{proof}

\begin{quest}
    Assume $\diamondsuit_{\omega_1}$ and $X\subseteq \mathbb{R}$ Hagendorf. Does a Suslin tree as above exist? 
\end{quest}

\begin{quest}
    Does the existence of a real Hagendorf order under $\CH$ imply the existence of a Suslin tree?
\end{quest}

The following question is related to the proof under $\PFA$:

\begin{quest}
    Is it consistent that there is a real Hagendorf order of size $\mathfrak{c} \geq \aleph_2$, where Theorem~\ref{thm:smallembed} is witnessed by a countable set?
\end{quest}

\begin{quest}[J. Larson]
    Do $\sigma$-scattered Hagendorf orders exist?
\end{quest}

\begin{quest}
    Is it consistent that there are no Hagendorf orders at all? 
\end{quest}

\subsection*{Acknowledgements}
This research was funded in whole or in part by the Austrian Science Fund (FWF) [10.55776/ESP5711024]. For open access purposes, the authors have applied a CC BY public copyright license to any author-accepted manuscript version arising from this submission. No data are associated with this article.

\bibliographystyle{plain}
\bibliography{ref}

\end{document}